\newtheorem{theorem}{Theorem}[section]
\newtheorem{lemma}{Lemma}[section]
\newtheorem{definition}{Definition}[section]
\newtheorem{proposition}{Proposition}[section]
\newtheorem{assumption}{Assumption}
\newtheorem{remark}{Remark}[section]
\newtheorem{alemma}{Lemma}
\newenvironment{proof}{{\noindent \bf Proof:}}{\hfill$\Box$\medskip}
\definecolor{lred}{rgb}{1,0.8,0.8}
\definecolor{lblue}{rgb}{0.8,0.8,1}
\definecolor{dred}{rgb}{0.6,0,0}
\definecolor{dblue}{rgb}{0,0,0.5}
\definecolor{dgreen}{rgb}{0,0.5,0.5}
\title{An inexact regularized proximal Newton method for nonconvex and nonsmooth optimization}
\author{
 Ruyu Liu\footnote{School of Mathematics, South China University of Technology, Guangzhou (\href{mailto:maruyuliu@mail.scut.edu.cn}{maruyuliu@mail.scut.edu.cn}).},\ \
 Shaohua Pan\footnote{School of Mathematics, South China University of Technology, Guangzhou(\href{mailto:shhpan@scut.edu.cn}{shhpan@scut.edu.cn}).},\ \
 Yuqia Wu\footnote{Department of Applied Mathematics, The Hong Kong Polytechnic University, 
 	Hong Kong(\href{mailto:yuqia.wu@connect.polyu.hk}{yuqia.wu@connect.polyu.hk}).},\ \ 
 Xiaoqi Yang\footnote{Department of Applied Mathematics, The Hong Kong Polytechnic University, 
 	Hong Kong(\href{mailto:mayangxq@polyu.edu.hk}{mayangxq@polyu.edu.hk}).}.
 }
\date{}
\begin{document}
 
\maketitle

%

\begin{abstract}
	This paper focuses on the minimization of a sum of a twice continuously differentiable function $f$ and a nonsmooth convex function. An inexact regularized proximal Newton method is proposed by an approximation to the Hessian of $f$ involving the $\varrho$th power of the KKT residual. For $\varrho=0$, we justify the global convergence of the iterate sequence for the KL objective function and its R-linear convergence rate for the KL objective function of exponent $1/2$. For $\varrho\in(0,1)$, by assuming that cluster points satisfy a locally H\"{o}lderian error bound of order $q$ on a second-order stationary point set and a local error bound of order $q>1\!+\!\varrho$ on the common stationary point set, respectively, we establish the global convergence of the iterate sequence and its superlinear convergence rate with order depending on $q$ and $\varrho$. A dual semismooth Newton augmented Lagrangian method is also developed for seeking an inexact minimizer of subproblems. Numerical comparisons with two state-of-the-art methods on $\ell_1$-regularized Student's $t$-regressions, group penalized Student's $t$-regressions, and nonconvex image restoration confirm the efficiency of the proposed method.
\end{abstract}

\section{Introduction}\label{sec1}

Given a data matrix $A\in\mathbb{R}^{m\times n}$ and a vector $b\in\mathbb{R}^{m}$, we are interested in the following nonconvex and nonsmooth composite optimization problem
\begin{equation}\label{prob}
	\min_{x\in\mathbb{R}^n}F(x):=f(x)+g(x)\ \ {\rm with}\ f(x):=\psi(Ax-b),
\end{equation}
where $\psi\!:\mathbb{R}^m\to\overline{\mathbb{R}}$ and $g\!:\mathbb{R}^n\to\overline{\mathbb{R}}$ with 
$\overline{\mathbb{R}}\!:=(-\infty,\infty]$ are proper lower semicontinuous (lsc) functions and satisfy the following basic assumption: 
\begin{assumption}\label{ass-1}
	\begin{description}
		\item[(i)] $\psi$ is twice continuously differentiable on an open set containing $A(\mathcal{O})-b$, where $\mathcal{O}\subset\mathbb{R}^n$ is an open set covering the domain ${\rm dom}\,g$ of $g$; 
		
		\item[(ii)] $g$ is convex and continuous relative to ${\rm dom}\,g$; 
		
		\item[(iii)] $\inf_{x\in\mathbb{R}^n}F(x)>-\infty$.
	\end{description} 
\end{assumption}

Assumption \ref{ass-1} (ii) means that model \eqref{prob} allows $g$ to be an indicator function of a closed convex set in $\mathbb{R}^n$, and   it also covers the case that $g$ is a weakly convex function. Indeed, by recalling that $g$ is $\alpha$-weakly convex if $g(\cdot)\!+\!(\alpha/2)\|\cdot\|^2$ is convex for some $\alpha\ge0$, $F$ can be rewritten as $F=\overline{f}\!+\!\overline{g}$ with $\overline{f}(\cdot)\!=f(\cdot)\!-\!(\alpha/2)\|\cdot\|^2$ and $\overline{g}(\cdot)\!=g(\cdot)\!+\!(\alpha/2)\|\cdot\|^2$. Note that $\overline{f}$ can be reformulated as $\overline{\psi}(\overline{A}\cdot-\overline{b})$ for suitable $\overline{A}$ and $\overline{b}$ with $\overline{\psi}(\cdot)=\psi(\cdot)-(\alpha/2)\|\cdot\|^2$. Hence, $\overline{f}$ and $\overline{g}$ conform to Assumption \ref{ass-1}. 

Model \eqref{prob} has a host of applications in statistics, signal and image processing, machine learning, financial engineering, and so on. 
For example, the popular lasso \cite{Tibshirani96} and sparse inverse covariance estimation \cite{Yuan06} in statistics are the special instances of \eqref{prob} with a convex $\psi$. In some inverse problems, non-Gaussianity of noise or nonlinear relation between measurements 
and unknowns often leads to \eqref{prob} with a nonconvex $\psi$ (see
\cite{Bonettini17}). In addition, the higher moment portfolio selection problem (see \cite{Dihn11,Zhou21}) also takes
the form of \eqref{prob} with a nonconvex $\psi$.
\subsection{Related works}\label{sec1.1}

For problem \eqref{prob}, many types of methods have been developed. Fukushima and Mine \cite{Fukushima81} introduced originally the proximal gradient (PG) method;  Tseng and Yun \cite{Tseng09} proposed a block coordinate decent method and obtained the subsequence convergence of the iterate sequence and its R-linear convergence rate under the Luo-Tseng error bound; Milzarek \cite{Milzarek14} developed a class of methods by virtue of a combination of semismooth Newton steps, a filter globalization, and thresholding steps for \eqref{prob} with $g(x)=\mu\|x\|_1$, and achieved subsequence convergence and local $q$-superlinear convergence properties for $q\in(1,2]$; Bonettini et al. \cite{Bonettini20} extended their variable metric inexact line-search (VMILA) method \cite{Bonettini16} by incorporating a forward-backward step, and verified the global convergence of the iterate sequence and the linear convergence rate of the objective value sequence under the uniformly bounded positive definiteness of the scaled matrix and the KL property of exponent $\theta\in(0,1/2]$ of the forward-backward envelope (FBE) of $F$; and by using the FBE of $F$, initially introduced in \cite{Patrinos13}, Stella et al. studied a combination of PG step and quasi-Newton step of FBE of $F$ with a line search at the iterate, verified the global convergence for a KL function $F$ and the superlinear convergence rate under the nonsingularity of the Hessian of the FBE in \cite{Stella17}, and obtained the same properties as in \cite{Stella17} for \eqref{prob} but with a nonconvex $g$ by using an Armijo line search at the PG output of iterate in \cite{Themelis18}. 

Next we mainly review inexact proximal Newton methods that are closely related to this work. This class of methods, also called an inexact successive quadratic approximation method, is finding in each iterate an approximate minimizer $y^k$ satisying a certain inexactness criterion for
a subproblem of the following form 
\begin{equation}\label{subprobx}
	\mathop{\min}_{x\in\mathbb{R}^n}
	\Theta_k(x)\!:=\!f(x^k)+\langle\nabla\!f(x^k),x-\!x^k\rangle
	+\frac{1}{2}(x\!-x^k)^{\top}G_k(x\!-\!x^k)+g(x),
\end{equation}
where $x^k$ is the current iterate, and $G_k$ is a symmetric positive definite matrix that represents a suitable approximation of the Hessian $\nabla^2\!f(x^k)$. The proximal Newton method can be viewed as a special variable metric one, and it will reduce to the PG method if $G_k\!=\!\gamma_k I$ with $\gamma_k>0$ related to the Lipschitz constant of $\nabla\!f$. Note that subproblem \eqref{subprobx} is seeking a root of $0\in \nabla\!f(x^k)+G_k(x-x^k)+\partial g(x)$, the partially linearized version at $x^k$ of the stationary point equation $0\in\partial F(x)$, where $\partial F(x)$ denotes the basic (limiting or Mordukhovich) subdifferential of $F$ at $x$. The proximal Newton method belongs to the quite general iterative framework proposed by Fischer \cite{Fischer02} if the inexactness criterion there is used, but it is not implementable due to the involved unknown stationary point set. As pointed out in \cite{Yue19}, the proximal Newton method depends more or less on three key ingredients: the approximation matrix $G_k$, the inner solver to subproblem \eqref{subprobx}, and the inexactness criterion on $y^k$ (i.e., the stopping criterion of the inner solver to control the inexactness of $y^k$). Since \eqref{subprobx} takes into account the second-order information of $f$, the proximal Newton method has a remarkable superiority to the PG method, i.e., a faster convergence rate. 

Early proximal Newton methods were tailored for special instances of convex $\psi$ and $g$ in problem \eqref{prob} such as GLMNET \cite{Friedman07}, newGLMNET \cite{Yuan12}, QUIC \cite{Hsieh11} and the Newton-Lasso method \cite{Oztoprak12}. Lee et al. presented a generic version of the exact proximal Newton method in \cite{Lee14}, achieved a global convergence result by requiring the uniform positive definiteness of $G_k$, and established a local linear or superlinear convergence rate depending on the forcing term on a stopping criterion for an inexact proximal Newton method with unit step-size. Li et al. \cite{Li16} extended the exact proximal Newton method proposed in \cite{Dihn15} for self-concordant functions $f$ to the proximal Newton method with inexact steps, and achieved the local linear, superlinear or quadratic convergence rate resting with the parameter in the inexact criterion under the positive definiteness assumption of $\nabla^2\!f$ on ${\rm dom}\,f$. Yue et al. \cite{Yue19} proposed an inexact proximal Newton method with a regularized Hessian by an inexactness condition depending on the KKT residual of the original problem, and established the superlinear and quadratic convergence rates under the Luo-Tseng error bound. As far as we know, their work is the first to achieve the superlinear convergence without the strong convexity of $F$ for an implementable proximal Newton method, though Fisher \cite{Fischer02} ever got the superlinear convergence rate for the proposed iterative framework, covering the proximal Newton method with an impractical inexactness criterion, under the calmness of the mapping $(\partial F)^{-1}$. Mordukhovich et al. \cite{Mordu20} also studied a similar inexact regularized proximal Newton method, and achieved the $R$-superlinear convergence rate under the metric $q$-subregularity of $\partial F$ for $q\in(\frac{1}{2},1)$, and the quadratic convergence rate under the metric subregularity of $\partial F$ that is equivalent to the calmness of $(\partial F)^{-1}$. Their metric $q$-subregularity condition is weaker than the Luo-Tseng error bound. 

For problem \eqref{prob} with $g(x)=\mu\|x\|_1$, Byrd et al. \cite{Byrd16} studied an inexact proximal Newton method with an approximate solution criterion determined by the KKT residual of \eqref{prob}. They showed that the KKT residual sequence converges to zero under the uniformly bounded positive definiteness of $G_k$ and obtained local superlinear
and quadratic convergence rates under the positive definiteness and the Lipschitz continuity of $\nabla^2\!f$ at stationary points. For \eqref{prob} with an optimal-set-strongly-convex $F$, Lee and Wright \cite{Lee19} investigated an inexact proximal Newton method with an approximate solution criterion relying on the difference between the objective function value of \eqref{subprobx} at the iterate and its optimal value 
and established a global linear convergence rate of the objective value sequence under the uniformly bounded positive definiteness of $G_k$. Recently, Kanzow and Lechner \cite{Kanzow21} proposed a globalized inexact proximal Newton-type (GIPN) method by switching from a Newton step to a PG step when the proximal Newton direction does not satisfy a sufficient decrease condition, and established the global convergence and superlinear convergence rate under the uniformly bounded positive definiteness of $G_k$ and the local strong convexity of $F$. 

From the above discussions, we see that for the nonconvex problem \eqref{prob}, the existing global convergence results of the proximal Newton methods require the uniform positive definiteness of $G_k$, while the local superlinear (or quadratic) convergence results assume that $F$ is locally strongly convex in a neighborhood of cluster points of the iterate sequence. The local strong convexity of $F$ in a neighborhood of a stationary point implies the isolatedness of this stationary point, and then the Luo-Tseng error bound and subsequently the metric subregularity of the mapping $\partial F$. Inspired by the works \cite{Yue19,Mordu20}, it is natural to ask whether an inexact proximal Newton method can be designed for problem \eqref{prob} to possess a superlinear convergence rate without the local strong convexity of $F$. In addition, we observe that when the power $\varrho = 0$ in the regularized Hessian \eqref{def-G} below, the global convergence of the iterate sequence in \cite{Yue19} requires the Luo-Tseng's error bound as does for its linear convergence rate, and in addition their linear convergence rate result depends upon that the parameter of the method is upper bounded by the unknown constant of the error bound. Then, it is natural to ask whether it is possible to achieve the global convergence and linear convergence rate of the iterate sequence for \eqref{prob} under a weaker condition. This work aims to resolve these two questions for the nonconvex and nonsmooth problem \eqref{prob}.

\subsection{Our contributions}\label{sec1.2}

Motivated by the structure of $f$ and the work \cite{Ueda10} for a smooth unconstrained problem, we adopt the following regularized version of the Hessian $\nabla^2f(x^k)$:
\begin{equation}\label{def-G}
	G_k=\!\nabla^2\!f(x^k)+a_1[-\lambda_{\rm min}(\nabla^2\psi(Ax^k\!-\!b))]_{+}A^{\top}\!A+a_2[r(x^k)]^{\varrho}I
\end{equation}
to construct a strongly convex approximation of $F$ at iterate $x^k$, and propose an inexact regularized proximal Newton method (IRPNM) for 
\eqref{prob}, where $a_{+}\!:=\max(0,a)$ for $a\in\mathbb{R}$, $\lambda_{\rm min}(H)$ denotes the smallest eigenvalue of $H$,  
$r(x^k)$ is the KKT residual of \eqref{prob} at $x^k$ (see \eqref{Rmap} for its definition), and $a_1\!\ge 1,a_2\!>0$ and $\varrho\in[0,1)$ are the constants. Different from the regularized Hessian in \cite{Ueda10}, we here use $a_1[-\lambda_{\rm min}(\nabla^2\psi(Ax^k\!-\!b))]_{+}A^{\top}\!A$ to replace $a_1[-\lambda_{\rm min}(\nabla^2\!f(x^k))]_{+}I$ in order to avoid the computation of the smallest eigenvalue when $\psi$ is separable, i.e., $\psi(y)\!:=\sum_{i=1}^m\psi_i(y_i)$ with each $\psi_i:\mathbb{R}\to\overline{\mathbb{R}}$ being twice continuously differentiable on a suitable set. The matrix $G_k$ in \eqref{def-G} is uniformly positive definite when $\varrho = 0$ but not when $\varrho \in (0,1)$ because $r(x^k)\rightarrow 0$ as $k\rightarrow\infty$ as will be shown later. Our inexactness criterion on $y^k$ depends on the nonincreasing of the objective value of \eqref{subprobx}, along with the KKT residual $r(x^k)$ when $\varrho\in(0,1)$ and the approximate optimality of $y^k$ when $\varrho=0$; see criteria \eqref{inexact-conda} and \eqref{inexact-condb} below. In addition, the Armijo line search is imposed on the direction $d^k\!:=y^k-x^k$ to achieve a sufficient decrease of $F$. Our contributions are summarized as follows.

For $\varrho=0$, we achieve a global convergence of the iterate sequence 
for the KL function $F$ and its $R$-linear convergence rate for the KL function $F$ of exponent $1/2$, which is weaker than the Luo-Tseng error bound. In this case, our regularized proximal Newton method is similar to the VMILA in \cite{Bonettini17,Bonettini20} except that a different inexactness criterion and a scaled matrix involving the Hessian of $f$ are used. Compared with the convergence results in \cite{Bonettini20}, which removes the restriction condition (see \cite[Eq. (23)]{Bonettini17}) on the iterate sequence in the convergence analysis of \cite{Bonettini17}, our $R$-linear convergence rate is obtained for the iterate sequence instead of the objective value sequence, and the required KL property of exponent $1/2$ is for $F$ itself rather than its FBE. Though by \cite[Remark 5.1(ii)]{YuLiPong21} the KL property of $F$ with exponent $1/2$ implies that of its FBE, this requires the global Lipschitz continuity of $\nabla\!f$, which does not fit in our setting.

For $\varrho\in(0,1)$, we establish the global convergence of the iterate sequence and its superlinear convergence rate with order $q(1\!+\!\varrho)$, under an assumption that cluster points satisfy a locally H\"{o}lderian error bound of order $q\in(\max\{\varrho,(1\!+\!\varrho)^{-1}\},1]$ on a second-order stationary point set.  This result not only extends the conclusion of \cite[Theorem 5.1]{Mordu20} to problem \eqref{prob}, but also discards the local strong convexity required in  the convergence analysis of the (regularized) proximal Newton methods for this class of nonconvex and nonsmooth problems \cite{Byrd16,Kanzow21}. When cluster points satisfy a local error bound of order $q>1\!+\!\varrho$ on the common stationary point set, we also achieve the global convergence of the iterate sequence and its superlinear convergence rate of order ${(q-\varrho)^2}/{q}$ for $q>[\varrho+\!1/2+\!\sqrt{\varrho+\!1/4}]$, which bridges the gap that the second-order stationary point set may be empty. Compared with the superlinear convergence results in \cite{Themelis18} for the hybrid method of the PG steps and quasi-Newton steps of FBE of $F$, ours do not require twice epi-differentiability of $g$ and the strong local optimality of the limit (which is actually an isolated local minimizer), though there is no direct implication between our local error bound condition and their Dennis-Mor\'e condition.

In addition, inspired by the structure of $G_k$, we also develop a dual semismooth Newton augmented Lagrangian method (SNALM) to compute the approximate minimizer $y^k$ of \eqref{subprobx} satisying the inexactness criterion \eqref{inexact-conda}, and compare the performance of our IRPNM armed with SNALM with that of GIPN \cite{Kanzow21} and ZeroFPR \cite{Themelis18} on $\ell_1$-regularized Student's $t$-regressions, group penalized Student's $t$-regressions and nonconvex image restoration. Numerical comparisons indicate that IRPNM is superior to GIPN in the objective value and the running time, and it is comparable with ZeroFPR in terms of the objective value, but requires much less running time than ZeroFPR if the obtained stationary point is a second-order one (such as for $\ell_1$-regularized and group penalized Student's $t$-regressions), otherwise requires more running time than the latter (such as for nonconvex image restoration). Such a numerical performance is entirely consistent with the theoretical results. 
\subsection{Notation}\label{sec1.3}

Throughout this paper, $\mathbb{S}^n$ represents the set of all $n\times n$ real 
symmetric matrices, $\mathbb{S}_{+}^n$ denotes the cone consisting of all positive
semidefinite matrices in $\mathbb{S}^n$, and $I$ denotes an identity matrix
whose dimension is known from the context. For a real symmetric matrix $H$,
$\|H\|$ denotes the spectral norm of $H$, and $H\succeq 0$ means that $H\in\mathbb{S}_{+}^n$. For a closed set $C\subset\mathbb{R}^n$, 
$\Pi_{C}$ denotes the projection operator onto the set $C$, 
${\rm dist}(x,C)$ means the Euclidean distance of a vector $x\in\mathbb{R}^n$ 
to the set $C$, and $\delta_{C}$ denotes the indicator function of $C$. 
For a vector $x\in\mathbb{R}^n$, $\mathbb{B}(x,\delta)$ denotes the closed ball 
centered at $x$ with radius $\delta>0$. For a
multifunction $\mathcal{F}\!:\mathbb{R}^n\rightrightarrows\mathbb{R}^n$, its graph is defined as ${\rm gph}\,\mathcal{F}:=\{(x,y) \ | \ y \in \mathcal{F}(x)\}$. A function $h\!:\mathbb{R}^n\to\overline{\mathbb{R}}$ is said to be proper if its domain ${\rm dom}\,h:=\{x\in\mathbb{R}^n\ |\ h(x)<\infty\}$ is nonempty.  For a proper $h\!:\mathbb{R}^n\to\overline{\mathbb{R}}$ and a point $x\in{\rm dom}\,h$,  $h'(x;d)\!:=\!\lim_{\tau\downarrow0}(h(x+\tau d)-h(x))/\tau$ denotes its directional derivative at $x$ along a direction $d\in\mathbb{R}^n$, $\partial h(x)$ denotes its basic (or limiting) subdifferential at $x$, and we write $[\alpha<h<\beta]\!:=\{x\in\mathbb{R}^n\,|\,\alpha<h(x)<\beta\}$ for real numbers $\alpha<\beta$.

\section{Preliminaries}\label{sec2} 
    For a closed proper function $h\!:\mathbb{R}^n\to\overline{\mathbb{R}}$,
	its proximal mapping $\mathcal{P}_{\!\gamma h}$ and Moreau envelope 
	$e_{\gamma h}$ associated to a parameter $\gamma>0$ are respectively defined as 
	\[
	\mathcal{P}_{\!\gamma h}(x)\!:=\!\mathop{\arg\min}_{z\in\mathbb{R}^n}
	\Big\{\frac{1}{2\gamma}\|z-x\|^2+h(z)\Big\}\ {\rm and}\  
	e_{\gamma h}(x)\!:=\!\min_{z\in\mathbb{R}^n}\Big\{\frac{1}{2\gamma}\|z-x\|^2+h(z)\Big\}.
	\] 
	By \cite[Theorem 12.12]{RW98}, when $h$ is convex, the mapping $\mathcal{P}_{\!\gamma h}$ is nonexpansive, i.e., $\|\mathcal{P}_{\!\gamma h}(x)-\mathcal{P}_{\!\gamma h}(y)\|\le\|x-y\|$ for any $x,y\in\mathbb{R}^n$. We also need the strict continuity of a function at a point relative to a set containing this point. By \cite[Definition 9.1]{RW98}, a function $h\!:\mathbb{R}^n\to\overline{\mathbb{R}}$ is strictly continuous at a point $\overline{x}$ relative to a set $D\subset {\rm dom}\,h$ if $\overline{x}\in D$ and the Lipschitz modulus of $h$ at $\overline{x}$, denoted by ${\rm lip}_{D}h(\overline{x})$, is finite. That is, to say that $h$ is strictly continuous at $\overline{x}$ relative to $D$ is to assert the existence of a neighborhood $V$ of $\overline{x}$ such that $h$ is Lipschitz continuous on $D\cap V$.

Next we recall the concept of stationary point and clarify its relation with $L$-stationary point, and also introduce a kind of second-order stationary point for \eqref{prob}. 
\subsection{Stationary points of problem \eqref{prob}}\label{sec2.1}

	Recall that a vector $x\in{\rm dom}\,g$ is a stationary point of \eqref{prob} if $x$ is a critical point of $F$, i.e.,
	$0\in\partial F(x)=\nabla\!f(x)+\partial g(x)$, and we denote by $\mathcal{S}^*$ the set of stationary points. Define the KKT residual mapping and residual function of \eqref{prob} respectively by
	\begin{equation}\label{Rmap}
		R(x)\!:=x\!-\!\mathcal{P}_g(x\!-\!\nabla\!f(x))\ \ {\rm and}\ \ 
		r(x):=\|R(x)\|\quad{\rm for}\ x\in\mathbb{R}^n.
	\end{equation}
	By the convexity of $g$, it is easy to check that $0\in\partial F(x)$
	if and only if $r(x)=0$. By \cite[Definition 4.1]{Beck19}, for a vector $x\in\mathbb{R}^n$, if there exists a constant $L>0$ such that $x=\mathcal{P}_{\!L^{-1}g}(x\!-\!L^{-1}\nabla\!f(x))$, then it is called an $L$-stationary point of \eqref{prob}. By the convexity of $g$, one can check that $x$ is a stationary point of \eqref{prob} if and only if it is an $L$-stationary point of \eqref{prob}, and if $x$ is an $L$-stationary point of \eqref{prob} for some $L>0$, then it is also $L$-stationary for any $L>0$. We call $x^*\in\mathcal{S}^*$ a second-order stationary point if $\nabla^2\psi(Ax^*\!-\!b)\succeq 0$, and we denote by $\mathcal{X}^*$ the set of second-order stationary points of \eqref{prob}. By the expression of $f$ and Assumption \ref{ass-1} (i), $\mathcal{X}^*\subset\big\{x\in\mathcal{S}^*\,|\,\nabla^2\!f(x)\succeq 0\big\}$, and the inclusion will become an equality if $A$ has a full row rank. By Assumption \ref{ass-1} (i) and the outer semicontinuity of $\partial g$, both $\mathcal{S}^*$ and  $\mathcal{X}^*$ are closed. Note that $\mathcal{X}^*$ may be empty even if $\mathcal{S}^*$ is nonempty. As will be shown by Proposition \ref{prop-xk} (iv) later, $\mathcal{S}^*$ is nonempty under the boundedness of a level set of $F$. A local minimizer of $F$ may not be a second-order stationary point, and the converse is not necessarily true either. 

Let $\vartheta\!:\mathcal{O}\to\mathbb{R}$ be a continuously differentiable function.
Consider the problem
\begin{equation}\label{prob1}
	\min_{x\in\mathbb{R}^n}\big\{\vartheta(x)+g(x)\big\}
\end{equation}
and its canonical perturbation problem induced by a parameter vector $u\in\mathbb{R}^n$:
\begin{equation}\label{u-prob}
	\min_{x\in\mathbb{R}^n}\big\{\vartheta(x)+g(x)-\langle u,x\rangle\big\}.
\end{equation}
The following proposition states the relation between an approximate stationary point
of problem \eqref{prob1} and a stationary point of its canonical perturbation \eqref{u-prob}.
\begin{proposition}\label{prop-stationary}
	Let $R_{\vartheta}(x)\!:=x\!-\!\mathcal{P}_g(x\!-\!\nabla\vartheta(x))$ for  $x\in\mathbb{R}^n$. Fix any $\epsilon>0$. If problem \eqref{prob1} has a nonempty stationary point set, then it has an $\epsilon$-approximate stationary point $\widehat{x}$ (i.e., $\widehat{x}\in{\rm dom}\,g$ and $\|R_{\vartheta}(\widehat{x})\!+\!\nabla\vartheta(\mathcal{P}_g(\widehat{x}\!-\!  \nabla\vartheta(\widehat{x})))\!-\!\nabla\vartheta(\widehat{x})\|\le\epsilon$) such that   $\overline{x}_{u}\!:=\mathcal{P}_g(\widehat{x}\!-\!\nabla\vartheta(\widehat{x}))$ is a stationary point of \eqref{u-prob} with $u=R_{\vartheta}(\widehat{x})+\!\nabla\vartheta(\overline{x}_{u})	-\!\nabla\vartheta(\widehat{x})$.
\end{proposition}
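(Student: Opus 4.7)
The plan is to prove the two assertions essentially independently, with both reducing to the standard first-order characterization of the proximal mapping: $z=\mathcal{P}g(v)$ if and only if $v-z\in\partial g(z)$. The only other ingredient is the identity $R_{\vartheta}(\widehat{x})=\widehat{x}-\mathcal{P}g(\widehat{x}-\nabla\vartheta(\widehat{x}))$ built into the definition of $R_{\vartheta}$.

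For the first claim, I would simply take $\widehat{x}$ to be any element of the (nonempty) stationary point set of \eqref{prob1}. Since $R_{\vartheta}(\widehat{x})=0$ forces $\mathcal{P}g(\widehat{x}-\nabla\vartheta(\widehat{x}))=\widehat{x}$, the expression inside the norm collapses to $0+\nabla\vartheta(\widehat{x})-\nabla\vartheta(\widehat{x})=0\le\epsilon$. Thus any stationary point trivially qualifies as an $\epsilon$-approximate stationary point for every $\epsilon>0$; the statement is a device that legalizes the use of inexact outputs later in the paper, and carries no real content beyond this tautological verification.

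For the second claim, I would start from $\overline{x}_{u}=\mathcal{P}g(\widehat{x}-\nabla\vartheta(\widehat{x}))$ and apply the optimality condition of the strongly convex proximal subproblem to get $(\widehat{x}-\nabla\vartheta(\widehat{x}))-\overline{x}_{u}\in\partial g(\overline{x}_{u})$. Using $\widehat{x}-\overline{x}_{u}=R_{\vartheta}(\widehat{x})$, this rewrites as $R_{\vartheta}(\widehat{x})-\nabla\vartheta(\widehat{x})\in\partial g(\overline{x}_{u})$. Then I would substitute the prescribed $u=R_{\vartheta}(\widehat{x})+\nabla\vartheta(\overline{x}_{u})-\nabla\vartheta(\widehat{x})$ to replace the left-hand side by $u-\nabla\vartheta(\overline{x}_{u})$, yielding $0\in\nabla\vartheta(\overline{x}_{u})+\partial g(\overline{x}_{u})-u$, which is exactly the stationarity condition for \eqref{u-prob} in view of the identity $\widehat{\partial}(\vartheta+g-\langle u,\cdot\rangle)=\nabla\vartheta+\partial g-u$ on $\mathrm{dom}\,g$ recorded at the beginning of Section~\ref{sec2.1}.

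There is no real obstacle here; the proof is a two-line algebraic manipulation. The only point requiring mild care is the bookkeeping of arguments: $\nabla\vartheta$ is evaluated at $\widehat{x}$ inside the proximal mapping, while the stationarity of \eqref{u-prob} involves $\nabla\vartheta(\overline{x}_{u})$, and the definition of $u$ is engineered precisely to absorb this mismatch.
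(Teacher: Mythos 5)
Your proof is correct, and the substantive second half (showing $\overline{x}_u$ is a stationary point of \eqref{u-prob}) is exactly the paper's argument: the prox characterization gives $R_{\vartheta}(\widehat{x})-\nabla\vartheta(\widehat{x})\in\partial g(\overline{x}_u)$, and the definition of $u$ converts this into $u\in\nabla\vartheta(\overline{x}_u)+\partial g(\overline{x}_u)$. The only divergence is in the existence claim: you take $\widehat{x}$ to be an exact stationary point, which makes the residual identically zero, whereas the paper takes a sequence $\{x^k\}\subset{\rm dom}g$ converging to a stationary point and invokes continuity of $x\mapsto R_{\vartheta}(x)+\nabla\vartheta(\mathcal{P}g(x-\nabla\vartheta(x)))-\nabla\vartheta(x)$ to extract some $\widehat{x}$ with residual at most $\epsilon$. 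Both establish the literal statement, and your shortcut is legitimate; but your remark that the existence clause is a contentless tautology slightly undersells the point the authors are making, namely that \emph{every} point sufficiently near a stationary point is an $\epsilon$-approximate stationary point, and that the second assertion holds for an \emph{arbitrary} such $\widehat{x}$ — this is the form in which the proposition is actually invoked in Lemma \ref{yk-define}, where $\widehat{x}$ is an inexact iterate rather than an exact stationary point.
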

\begin{proof}
	Let $\{x^k\}\subset{\rm dom}\,g$ be a sequence converging to a stationary point
	of \eqref{prob1}. For each $k$, define $u^k:=R_{\vartheta}(x^k)\!
	+\nabla\vartheta(\mathcal{P}_g(x^k-\!\nabla\vartheta(x^k)))-\!\nabla\vartheta(x^k)$.
	From the continuity, $\lim_{k\to\infty}\|u^k\|=0$, so there exists
	$\widehat{x}\in{\rm dom}\,g$ with 
	$\|R_{\vartheta}(\widehat{x})+\!\nabla\vartheta(\mathcal{P}_g(\widehat{x}-\! \nabla\vartheta(\widehat{x})))-\!\nabla\vartheta(\widehat{x})\|\le\epsilon$. 
	Then, $u=R_{\vartheta}(\widehat{x})+\!\nabla\vartheta(\mathcal{P}_g(\widehat{x}-\!  \nabla\vartheta(\widehat{x})))-\!\nabla\vartheta(\widehat{x})$ is well defined.
	From the definition of $\overline{x}_{u}$ and the expression of
	$R_{\vartheta}(\widehat{x})$, $R_{\vartheta}(\widehat{x})-\nabla\vartheta(\widehat{x})
	\in\partial g(\overline{x}_{u})$. Consequently, 
	$u\in\nabla\vartheta(\overline{x}_u)+\partial g(\overline{x}_u)$,
	and $\overline{x}_u$ is a stationary point of \eqref{u-prob} associated to $u$. 
\end{proof}

The Kurdyka-{\L}ojasiewicz (KL) property plays a crucial role in the convergence analysis of algorithms for nonconvex and nonsmooth optimization problems \cite{Attouch10,Attouch13}, while the metric $q$-subregularity of a multifunction has been used to analyze the local superlinear and quadratic convergence rates of the proximal Newton-type method for nonsmooth composite convex optimization \cite{Mordu20}. Next we explore the relation between the KL property of $F$ and the metric $q$-subregularity of the mapping $\partial F$. These two kinds of regularity are used in the convergence analysis of our algorithm in section \ref{sec4}.   

\subsection{KL property and metric $q$-subregularity}\label{sec2.3}

First we recall the KL property of an extended real-valued function. For each $\varpi\in(0,\infty]$, denote by $\Upsilon_{\!\varpi}$ the family of continuous concave functions $\varphi\!:[0,\varpi)\to\mathbb{R}_{+}$ with $\varphi(0)=0$ that are continuously differentiable on $(0,\varpi)$ with $\varphi'(s)\!>0$ for $s\in(0,\varpi)$.
\begin{definition}\label{KL-def}
	A proper function $h\!:\mathbb{R}^n\to\overline{\mathbb{R}}$ is said to have the KL property at a point $\overline{x}\in{\rm dom}\,\partial h$ if there exist $\delta>0,\varpi\in(0,\infty]$ and $\varphi\in\Upsilon_{\!\varpi}$ such that for all $x\in\mathbb{B}(\overline{x},\delta)\cap\big[h(\overline{x})<h<h(\overline{x})+\varpi\big]$, $\varphi'(h(x)\!-\!h(\overline{x})){\rm dist}(0,\partial h(x))\ge 1$. If $\varphi$ can be chosen to be the function $t\mapsto ct^{1-\theta}$ with $\theta\in[0,1)$ for some $c>0$, then $h$ is said to have the KL property of exponent $\theta$ at $\overline{x}$. If $h$ has the KL property (of exponent $\theta$) at each point of ${\rm dom}\,\partial h$, it is called a KL function (of exponent $\theta$).
\end{definition}

By \cite[Lemma 2.1]{Attouch10}, a proper lsc function $h\!:\mathbb{R}^n\to\overline{\mathbb{R}}$ has the KL property of exponent $0$ at every noncritical point (i.e., the point at which the limiting subdifferential of $h$ does not contain $0$). Thus, to show that a proper lsc function is a KL function of exponent $\theta\in[0,1)$, it suffices to check its KL property of exponent $\theta\in[0,1)$ at all critical points. On the calculation of the KL exponent, we refer the readers to \cite{LiPong18,YuLiPong21,WuPanBi21}. As illustrated in \cite[Section 4]{Attouch10}, KL functions are rather extensive and cover semialgebraic functions, global subanalytic functions, and functions definable in an o-minimal structure over the real field $(\mathbb{R},+,\cdot)$. 

Next we give the formal definition of the metric $q$-subregularity of a multifunction. 
\begin{definition}\label{Def2.2}(see \cite[Definition 3.1]{Li12})
	Let $\mathcal{F}\!:\mathbb{R}^n\rightrightarrows\mathbb{R}^n$ be a multifunction. Consider any point $(\overline{x},\overline{y})\in{\rm gph}\,\mathcal{F}$. For a given $q>0$, we say that $\mathcal{F}$ is (metrically) $q$-subregular at $\overline{x}$ for $\overline{y}$ if there exist $\kappa>0$ and $\delta>0$ such that for all $x\in\mathbb{B}(\overline{x},\delta)$,
	\begin{equation}\label{q-subregular}
		{\rm dist}(x,\mathcal{F}^{-1}(\overline{y}))\le\kappa[{\rm dist}(\overline{y},\mathcal{F}(x))]^{q}.
	\end{equation}
	When $q=1$, this property is called the (metric) subregularity of $\mathcal{F}$ at $\overline{x}$ for $\overline{y}$.
\end{definition}

By Definition \ref{Def2.2}, if $\overline{x}\in{\rm dom}\,\mathcal{F}$ is an isolated point, $\mathcal{F}$ is subregular at $\overline{x}$ for any $\overline{y}\in\mathcal{F}(\overline{x})$; and if $\mathcal{F}(\overline{x})$ is closed, the subregularity of $\mathcal{F}$ at $\overline{x}$ for $\overline{y}\in\mathcal{F}(\overline{x})$ implies its $q$-subregularity at $\overline{x}$ for $\overline{y}$ for any $q\in(0,1)$ (now also known as the $q$-order H\"{o}lderian subregularity). The following lemma shows that the $q$-subregularity of the mapping $\partial F$ is equivalent to that of the mapping $R$ defined in \eqref{Rmap}. It is worth pointing out that such an equivalence was ever obtained in \cite{Dong09} and \cite[page 21]{Drusvyatskiy18} for $q=1$.  
\begin{lemma}\label{sregular-relation}
	Consider any $\overline{x}\in\mathcal{S}^*$ and $q>0$. If the mapping $\partial F$ is $q$-subregular at $\overline{x}$ for $0$, then the residual mapping $R$ is $\min\{q,1\}$-subregular at $\overline{x}$ for $0$. Conversely, if the mapping $R$ is $q$-subregular at $\overline{x}$ for $0$, so is the mapping $\partial F$ at $\overline{x}$ for $0$. 
\end{lemma}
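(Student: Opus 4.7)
The plan is to use the fact, noted right after \eqref{Rmap}, that $R(x)=0$ if and only if $0\in\partial F(x)$, so that $R^{-1}(0)=(\partial F)^{-1}(0)=\mathcal{S}^*$ and both subregularities are really estimates of ${\rm dist}(x,\mathcal{S}^*)$ in terms of different residual quantities. The whole task thus reduces to a two-sided comparison of $\|R(x)\|$ and ${\rm dist}(0,\partial F(x))$ on a small neighbourhood of $\overline{x}$, up to an unavoidable lower-order truncation in one direction only.

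For the converse direction I would first prove the clean global inequality $\|R(x)\|\le{\rm dist}(0,\partial F(x))$. The steps are: pick any $v\in\partial F(x)=\nabla\!f(x)+\partial g(x)$, set $y:=\mathcal{P}g(x-\nabla\!f(x))$, read off from first-order optimality of the prox that $x-y-\nabla\!f(x)\in\partial g(y)$ while $v-\nabla\!f(x)\in\partial g(x)$, and feed this pair into the monotonicity of $\partial g$. A Cauchy--Schwarz step then yields $\|x-y\|\le\|v\|$, and taking the infimum over $v$ gives the bound. Raising to the $q$-th power converts $q$-subregularity of $R$ into $q$-subregularity of $\partial F$ with the same exponent (and even the same constant), which is why no loss in $q$ appears in this direction.

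For the forward direction I would instead build a subgradient at the auxiliary point $y:=\mathcal{P}g(x-\nabla\!f(x))$ rather than at $x$: from $x-y-\nabla\!f(x)\in\partial g(y)$ the vector $w:=(x-y)+\nabla\!f(y)-\nabla\!f(x)$ belongs to $\partial F(y)=\nabla\!f(y)+\partial g(y)$, and the local Lipschitz continuity of $\nabla\!f$ near $\overline{x}$ provided by Assumption \ref{ass-1} (i) gives $\|w\|\le(1+L)\|R(x)\|$. Applying the assumed $q$-subregularity of $\partial F$ at the point $y$, followed by the triangle inequality ${\rm dist}(x,\mathcal{S}^*)\le\|x-y\|+{\rm dist}(y,\mathcal{S}^*)$, produces an estimate of the form $\|R(x)\|+\kappa(1+L)^q\|R(x)\|^q$. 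Shrinking the neighbourhood of $\overline{x}$ so that $\|R(x)\|\le 1$ collapses the two terms into a single power $\|R(x)\|^{\min\{q,1\}}$; this truncation is precisely why only $\min\{q,1\}$ and not $q$ survives in the forward direction when $q>1$.

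The only delicate point is that the subregularity inequality for $\partial F$ is stated at $\overline{x}$ yet must be evaluated at the auxiliary point $y$. This is harmless because $\mathcal{P}g$ is nonexpansive and $\nabla\!f$ is continuous, hence $y\to\overline{x}$ whenever $x\to\overline{x}$, and consequently $y\in\mathbb{B}(\overline{x},\delta)$ once $x$ is sufficiently close to $\overline{x}$. Beyond this small continuity check, the argument is a routine combination of first-order optimality of the proximal mapping, monotonicity of $\partial g$, and local Lipschitzness of $\nabla\!f$, so no substantive obstacle is anticipated.
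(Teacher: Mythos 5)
Your proposal is correct and follows essentially the same route as the paper's proof: the forward direction uses the identical auxiliary point $y=\mathcal{P}g(x-\nabla\!f(x))=x-R(x)$, the same constructed subgradient $w\in\partial F(y)$ with the $(1+L)$ Lipschitz bound, the same continuity check that $y$ stays in the subregularity neighbourhood, and the same truncation via $\|R(x)\|\le 1$ to obtain the exponent $\min\{q,1\}$. The converse is likewise the paper's argument, with your monotonicity-of-$\partial g$ step being just an unwound version of the paper's appeal to nonexpansiveness of $\mathcal{P}g$ applied to $R(x)=\mathcal{P}g(x+\xi-\nabla\!f(x))-\mathcal{P}g(x-\nabla\!f(x))$.
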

\begin{proof}
	Suppose that $\partial F$ is $q$-subregular at $\overline{x}$ for $0$.
	There exist $\varepsilon>0,\kappa>0$ such that
	\begin{equation}\label{subregular-ineq0}
		{\rm dist}(z,\mathcal{S}^*)\le\kappa[{\rm dist}(0,\partial F(z))]^{q}
		\quad{\rm for\ all}\ z\in\mathbb{B}(\overline{x},\varepsilon).
	\end{equation}
	Since $\nabla\!f$ is strictly continuous on $\mathcal{O}$ by Assumption \ref{ass-1} (i), there exist $\widetilde{\varepsilon}\in(0,{1}/{2})$ and $L'>0$ such that for all $z,z'\in\mathbb{B}(\overline{x},\widetilde{\varepsilon})$, \begin{equation}\label{fgrad-ineq1}
		\|\nabla\!f(z')-\nabla\!f(z)\|\le L'\|z'-z\|.
	\end{equation}
	From $R(\overline{x})=0$ and the continuity of $R$ at $\overline{x}$, we have $\|R(z)\|\le 1$ for all $z\in\mathbb{B}(\overline{x},\widetilde{\varepsilon})$ (if necessary by shrinking $\widetilde{\varepsilon}$).
	Pick any $x\in\mathbb{B}(\overline{x},\delta)$ with $\delta=\min\{\varepsilon,\widetilde{\varepsilon}\}/(1+\!L')$.
	Write $u\!:=R(x)=x-\mathcal{P}_g(x-\!\nabla f(x))$. Note that $\overline{x}=\mathcal{P}_g(\overline{x}-\!\nabla f(\overline{x}))$. Then, 
	\begin{align*}
		\|x-u-\overline{x}\|
		&=\|\mathcal{P}_g(x-\nabla\!f(x))-\mathcal{P}_g(\overline{x}-\nabla\!f(\overline{x}))\|\\
		&\le\|x-\overline{x}+\nabla\!f(\overline{x})-\nabla\!f(x)\|
		\le(1+L')\delta=\min\{\varepsilon,\widetilde{\varepsilon}\}, 
	\end{align*}
	so that $x-u\in\mathbb{B}(\overline{x},\min\{\varepsilon,\widetilde{\varepsilon}\})$, where the first inequality is due to nonexpansiveness of $\mathcal{P}_g$, and the second one is using \eqref{fgrad-ineq1}. In addition, from $u=x-\mathcal{P}_g(x-\nabla\!f(x))$, we deduce that $\nabla\!f(x-\!u)+u-\nabla\!f(x)\in\partial F(x-\!u)$.
	Now using \eqref{subregular-ineq0} with $z=x-u$ and \eqref{fgrad-ineq1} with 
	$z'=x-u$ and $z=x$ yields that
	\[
	{\rm dist}(x\!-\!u,\mathcal{S}^*)\le\kappa\|\nabla\!f(x\!-\!u)+u-\nabla\!f(x)\|^q\le \kappa(1\!+\!L')^q\|u\|^q.
	\]
	Then, ${\rm dist}(x,\mathcal{S}^*)\le\|u\|+\kappa(1\!+L')^q\|u\|^q
	\le[1\!+\kappa(1\!+\!L')^q]\|R(x)\|^{\min\{q,1\}}$, where the second inequality is using $\|R(x)\|\le 1$. By the arbitrariness of $x$ in  
	$\mathbb{B}(\overline{x},\delta)$, the mapping $R$ is $\min\{q,1\}$-subregular at $\overline{x}$ for $0$. Conversely, suppose that the mapping $R$ is $q$-subregular at $\overline{x}$ for $0$. Then, there exist $\delta>0$ and $\nu>0$ such that 
	\begin{equation}\label{subregular-ineq-1}
		{\rm dist}(z,\mathcal{S}^*)\le\nu\|R(z)\|^q\quad\ {\rm for\ all}\ z\in\mathbb{B}(\overline{x},\delta).
	\end{equation}
	Pick any $x\in\mathbb{B}(\overline{x},\delta)\cap{\rm dom}\,\partial F$.
	 By the closedness of $\partial F$, there exists $\xi\in\partial F(x)$ such that $\|\xi\|={\rm dist}(0,\partial F(x))$. 
	From $\xi\in\partial F(x)$ and $\mathcal{P}_g(z)=(I\!+\!\partial g)^{-1}(z)$ for any $z\in\mathbb{R}^n$, we derive $x=\mathcal{P}_g(x+\xi\!-\!\nabla\!f(x))$ and $R(x)=\mathcal{P}_g(x+\xi\!-\!\nabla\!f(x))\!-\!\mathcal{P}_g(x\!-\!\nabla\!f(x))$. The latter, 
	 by the nonexpansiveness of $\mathcal{P}_g$, implies that $\|R(x)\|\le \|\xi\|={\rm dist}(0,\partial F(x))$.
	Together with \eqref{subregular-ineq-1}, we obtain
	${\rm dist}(x,\mathcal{S}^*)\le\nu[{\rm dist}(0,\partial F(x))]^q$, which holds trivially if $x\in\mathbb{B}(\overline{x},\delta)\backslash {\rm dom}\,\partial F$. Thus, the mapping $\partial F$ is $q$-subregular at $\overline{x}$ for $0$ 
\end{proof}

To achieve the relationship between the $q$-subregularity of $\partial F$ for $q>0$ and the KL property of $F$ with exponent $\theta\in(0,1)$, we need the following assumption.
\begin{assumption}\label{ass0}
	For any given $\overline{x}\!\in\mathcal{S}^*$, there exists $\epsilon>0$ such that $F(y)\le F(\overline{x})$ for all $y\in\mathcal{S}^*\cap\mathbb{B}(\overline{x},\epsilon)$.	
\end{assumption}
\begin{remark}\label{KL-assump}
	{\bf(a)} Obviously, Assumption \ref{ass0} is implied by \cite[Assumption 4.1]{LiPong18}, which can be regarded as a local version of \cite[Assumption B]{Luo93}.  In addition, we can provide an example for which Assumption \ref{ass0} holds, but \cite[Assumption 4.1]{LiPong18} does not hold. Let $F\equiv f$ with $f(x):=-e^{-\frac{1}{x^2}}(\sin\frac{1}{x})^2$ for $x \neq 0$ and $f(0):= 0$. It is easy to check that $F$ is smooth and $F'(0)=0$. Fix any $\epsilon\in(0,1/2)$. Pick any $y\in\mathbb{B}(0,\epsilon)\cap\mathcal{S}^*$. Clearly, $F(y)=f(y)\le 0=F(0)$, i.e., Assumption \ref{ass0} holds. Now let $y^1:=\frac{1}{(k+1)\pi}$ and $y^2:= \frac{1}{k\pi}$ with $k=\lceil \frac{1}{\epsilon \pi}\rceil+1$. Obviously, $y^1, y^2\in\mathbb{B}(0,\epsilon)$ with $f(y^1)=f(y^2)=0$. By Rolle's theorem, there must exist $y_0\in (y_1, y_2)$ such that $f'(y_0) =0$. Note that $f(y)<0$ for any $y\in (y^1, y^2)$, so that $f(y_0)<0$, which shows that \cite[Assumption 4.1]{LiPong18} does not hold. Thus, we conclude that Assumption \ref{ass0} is weaker than \cite[Assumption 4.1]{LiPong18}.  
	
	\noindent
	{\bf(b)} When $F$ has the KL property at $\overline{x}\in\mathcal{S}^*$, Assumption \ref{ass0} necessarily holds at $\overline{x}$. Indeed, suppose on the contradiction that Assumption \ref{ass0} does not hold at $\overline{x}$. Then, there exists a sequence $\{x^k\}\subset\mathcal{S}^*$ with $x^k\to \overline{x}$ such that $F(x^k)>F(\overline{x})$ for each $k$. Since $F$ has the KL property at $\overline{x}$, there exist $\delta>0,\varpi\in(0,+\infty]$ and $\varphi\in\Upsilon_{\!\varpi}$ such that for all $z\in\mathbb{B}(\overline{x},\delta)\cap[F(\overline{x})<F<F(\overline{x})+\varpi]$,
	$\varphi'(F(z)-F(\overline{x})){\rm dist}(0,\partial F(z))\ge 1$. By Assumption \ref{ass-1} (i)-(ii), $F$ is continuous at $\overline{x}$ relative to $\mathcal{S}^*$, so there exists $\delta'\in(0,\delta)$ such that for all $z\in\mathbb{B}(\overline{x},\delta')\cap\mathcal{S}^*$, 	$F(z)<F(\overline{x})+\varpi/2$. Clearly, for all sufficiently large $k$, $x^k\in\mathbb{B}(\overline{x},\delta')\cap[F(\overline{x})<F<F(\overline{x})+\varpi]$. Then, for all $k$ large enough,
	\[
	\varphi'(F(x^k)-F(\overline{x})){\rm dist}(0,\partial F(x^k))\ge 1, 
	\]
	which is impossible because ${\rm dist}(0,\partial F(x^k))=0$ is implied by $x^k\in\mathcal{S}^*$.
\end{remark}

The following proposition improves greatly the results of \cite[Propositions 3.1 \& 3.2]{LiuPan19}, an unpublished paper. Since its proof is a little long, we put it in Appendix \ref{secA2}.
\begin{proposition}\label{KL-subregular}
	Consider any $\overline{x}\in\mathcal{S}^*$ and $q>0$. The following assertions hold.
	\begin{description}
		\item [(i)] Under Assumption \ref{ass0}, the $q$-subregularity of the mapping $\partial F$
		at $\overline{x}$ for $0$ implies that the function $F$ has the KL property of exponent $\max\{\frac{1}{2q},\frac{1}{1+q}\}$ at $\overline{x}$.
		
		\item[(ii)] If $F$ has the KL property of exponent $\frac{1}{2q}$ for 
		$q\in(1/2,1]$ at $\overline{x}$	and $\overline{x}$ is a local minimizer of \eqref{prob},
		then $\partial F$ is $(2q\!-\!1)$-subregular at $\overline{x}$ for $0$.
	\end{description}
\end{proposition}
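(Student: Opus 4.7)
The plan is to treat the two implications separately, leveraging Assumption \ref{ass-1} to bridge the objective gap $F(x)-F(\overline{x})$, the subgradient norm $\mathrm{dist}(0,\partial F(x))$, and the distance $\mathrm{dist}(x,\mathcal{S}^*)$.

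For (i), I fix $x$ in a small ball around $\overline{x}$ and pick $\xi\in\partial F(x)$ realizing $\|\xi\|=\mathrm{dist}(0,\partial F(x))$. The $q$-subregularity of $\partial F$ supplies $x^*\in\mathcal{S}^*$ with $\|x-x^*\|\leq\kappa\|\xi\|^q$. Shrinking the neighborhood ensures $x^*$ lies in the ball where Assumption \ref{ass0} forces $F(x^*)\leq F(\overline{x})$. The descent lemma for $f$, based on the local Lipschitz continuity of $\nabla f$ from Assumption \ref{ass-1}(i), combined with the convex subgradient inequality $g(x)-g(x^*)\leq\langle\xi-\nabla f(x),x-x^*\rangle$, yields
\[
F(x)-F(\overline{x})\;\leq\;F(x)-F(x^*)\;\leq\;\|\xi\|\,\|x-x^*\|+\tfrac{3L}{2}\|x-x^*\|^2.
\]
Substituting $\|x-x^*\|\leq\kappa\|\xi\|^q$ produces $F(x)-F(\overline{x})\leq C\|\xi\|^{\min\{1+q,\,2q\}}$, and raising to the power $\theta=\max\{\tfrac{1}{1+q},\tfrac{1}{2q}\}=1/\min\{1+q,2q\}$ delivers the KL inequality with $\varphi(s)=cs^{1-\theta}$.

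For (ii), I would reverse the argument via a proximal descent trajectory. Pick $\gamma>0$ smaller than the reciprocal of the local Lipschitz modulus of $\nabla f$, so that $z\mapsto F(z)+\tfrac{1}{2\gamma}\|z-y\|^2$ is locally strongly convex. Starting from $x^0=x$, let $x^{k+1}$ be a local minimizer of $z\mapsto F(z)+\tfrac{1}{2\gamma}\|z-x^k\|^2$. Optimality furnishes the descent estimate $F(x^k)-F(x^{k+1})\geq\tfrac{1}{2\gamma}\|x^{k+1}-x^k\|^2$ and the relative-error bound $\mathrm{dist}(0,\partial F(x^{k+1}))\leq\gamma^{-1}\|x^{k+1}-x^k\|$. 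Since $\overline{x}$ is a local minimizer, $F(x^k)\geq F(\overline{x})$ as long as the iterates remain in the neighborhood. Pairing the KL inequality of exponent $\theta=\tfrac{1}{2q}$ with the concavity trick $(1-\theta)\,s^{-\theta}(s-t)\leq s^{1-\theta}-t^{1-\theta}$ and telescoping then gives the finite-length bound
\[
\sum_{k\geq 0}\|x^{k+1}-x^k\|\;\leq\;C\,[F(x)-F(\overline{x})]^{1-\theta}.
\]
Hence the sequence converges to some $x^{\star}\in\mathcal{S}^*$ with $\mathrm{dist}(x,\mathcal{S}^*)\leq\|x-x^{\star}\|\leq C\,[F(x)-F(\overline{x})]^{1-\theta}$. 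Substituting the rearranged KL bound $F(x)-F(\overline{x})\leq c\,\mathrm{dist}(0,\partial F(x))^{1/\theta}$ and noting $(1-\theta)/\theta=2q-1$ yields the claimed $(2q-1)$-subregularity.

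The main obstacle lies in the second part: the KL and descent inequalities are valid only while the proximal iterates stay inside the ball where KL holds and where $\overline{x}$ is the local minimum. Ensuring this requires a capture argument in which $\|x^k-\overline{x}\|$ is controlled by the partial sums $\sum_{j<k}\|x^{j+1}-x^j\|$, which in turn is bounded via the very KL estimate being applied. Closing this loop demands choosing the initial radius around $\overline{x}$ sufficiently small relative to the KL constant, the local Lipschitz modulus of $\nabla f$, and the level-set parameter $\varpi$, so that all iterates are trapped in the KL neighborhood from the outset.
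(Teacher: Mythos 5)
Your part (i) is essentially the paper's own argument: pick a nearest stationary point $u\in\Pi_{\mathcal{S}^*}(x)$, use Assumption \ref{ass0} to get $F(u)\le F(\overline{x})$, combine the descent lemma for $f$ with the subgradient inequality for $g$ to bound $F(x)-F(u)$ by $\|\xi\|\,\mathrm{dist}(x,\mathcal{S}^*)+O(\mathrm{dist}(x,\mathcal{S}^*)^2)$, and then substitute the $q$-subregularity estimate. The exponent bookkeeping $\theta=\max\{\tfrac{1}{2q},\tfrac{1}{1+q}\}=1/\min\{1+q,2q\}$ matches the paper exactly.

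Your part (ii) reaches the same conclusion by a genuinely different route. The paper first converts the KL inequality into the H\"olderian growth condition \eqref{Fgrowth} by applying the abstract error-bound theorem of Ngai and Th\'era \cite[Corollary 2(ii)]{Ngai09} to the localized function $\widetilde{F}=F+\delta_{\mathbb{B}(\overline{x},\widetilde{\varepsilon})}-F(\overline{x})$, observes that the sublevel set $S_{\widetilde{F}}$ consists of local minimizers and hence sits inside $(\partial F)^{-1}(0)$, and then feeds the growth condition back into the KL inequality to obtain \eqref{aim-ineq}. You instead prove the needed growth estimate from scratch by running a proximal point trajectory from $x$ and invoking the Attouch--Bolte finite-length mechanism: the telescoped concavity inequality bounds $\sum_k\|x^{k+1}-x^k\|$ by $C\varphi(F(x)-F(\overline{x}))$, the limit point lies in $\mathcal{S}^*$ (the optimality condition $\mathrm{dist}(0,\partial F(x^{k+1}))\le\gamma^{-1}\|x^{k+1}-x^k\|\to 0$ together with $F(x^k)\to F(x^\star)$, which follows from the proximal inequality and lower semicontinuity), and the final substitution $(1-\theta)/\theta=2q-1$ is correct. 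What the paper's route buys is brevity, by outsourcing the hardest step to a cited theorem; what yours buys is a self-contained argument, at the price of the capture argument you correctly flag — the iterates must be trapped in the ball where both the KL inequality and the local minimality of $\overline{x}$ hold, which is closed by the standard induction on the partial sums together with the continuity of $F$ at $\overline{x}$ relative to $\mathrm{dom}\,g$ (the same fact the paper uses in \eqref{equa-continuous}). Two small points to tidy up: the proximal subproblem has a global minimizer because $F$ is lsc and bounded below (local strong convexity is not what guarantees existence, only uniqueness near $x^k$), and the trivial cases $F(x)=F(\overline{x})$ (then $x$ is itself a local minimizer, so $\mathrm{dist}(x,\mathcal{S}^*)=0$) and $\partial F(x)=\emptyset$ should be dispatched explicitly, as the paper does after \eqref{aim-ineq}.
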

\begin{remark}\label{remark-qsubregular}
	{\bf(a)} The local optimality of $\overline{x}$ in Proposition \ref{KL-subregular} (ii) is sufficient but not necessary. For example, consider problem \eqref{prob} with  $f(x)=\psi(x)=\frac{1}{2}x_1^2+\frac{1}{4}x_2^4-\frac{1}{2}x_2^2$ and $g\equiv 0$ (see \cite[Section 1.2.3]{Nesterov04}). One can verify that $\mathcal{S}^*\!=\{x^{1,*},x^{2,*},x^{3,*}\}$ with $x^{1,*}=(0,0)^{\top},x^{2,*}=(0,-1)^{\top}$ and $x^{3,*}=(0,1)^{\top}$. Since the set  $\mathcal{S}^*$ is finite, Assumption \ref{ass0} holds at each $x^{i,*}$, and $\partial F=\nabla\!f$ is subregular at each $(x^{i,*},0)$. By Proposition \ref{KL-subregular}, $F$ has the KL property of exponent $\frac{1}{2}$ at $x^{1,*}$, but it is not a local minimizer of $F$.
	
	\noindent
	{\bf(b)} Under the assumption of Proposition \ref{KL-subregular} (ii), $F$ admits the growth at $\overline{x}$ as in \eqref{Fgrowth}. By using the example in part (a), one can verify that such a growth does not necessarily hold if the local optimality assumption on $\overline{x}$ is replaced by Assumption \ref{ass0}. The growth of $F$ in \eqref{Fgrowth} has the same order as the one obtained in \cite{Mordu15} under the $q$-subregularity of $\partial F$ with modulus $\kappa$ at $\overline{x}$ for $0$ and a lower calm-type assumption on $F$ at $\overline{x}$. 
	
	\noindent
	{\bf(c)} When $F$ is locally strong convex in a neighborhood of $\overline{x}\in\mathcal{S}^*$, there exist $\delta>0$ and $\widehat{c}>0$ such that for all $x\in\mathbb{B}(\overline{x},\delta)$, $F(x)-F(\overline{x})\ge\frac{\widehat{c}}{2}\|x-\overline{x}\|^2$, which by \cite[Theorem 5 (ii)]{Bolte17} means that $F$ has the KL property of exponent $1/2$ at $\overline{x}$. In fact, in this case, ${\rm dist}(0,\partial F(x))\ge\sqrt{\widehat{c}(F(x)\!-\!F(\overline{x}))}$ holds for all $x\in\mathbb{B}(\overline{x},\delta)$, which by Proposition \ref{KL-subregular} (ii) means that the mapping $\partial F$ is also subregular at $\overline{x}$ for $0$.
\end{remark}

To close this section, we present a condition to ensure that the local Lipschitz error bound on $\mathcal{X}^*$ holds, which will be used in the convergence analysis of section \ref{sec4.2}.
\begin{lemma}\label{alemma-ebound}
	Fix any $\overline{x}\in\mathcal{X}^*$. If $F$ has the KL property of exponent $1/2$ at $\overline{x}$, and there are $\delta>0,\alpha>0$ such that for all $x\in\mathbb{B}(\overline{x},\delta)$, $F(x)\ge F(\overline{x})+(\alpha/2)[{\rm dist}(x,\mathcal{X}^*)]^2$, then there exist $\delta'>0$ and $\kappa'>0$ such that ${\rm dist}(x,\mathcal{X}^*)\le \kappa' r(x)$ for all $x\in\mathbb{B}(\overline{x},\delta')\cap{\rm dom}\,g$.
\end{lemma}
\begin{proof}
	Since $F$ has the KL property of exponent $1/2$ at $\overline{x}$, there exist $\varepsilon'>0,\varpi>0$ and $c>0$ such that for all $z\in\mathbb{B}(\overline{x},\varepsilon')\cap[F(\overline{x})<F<F(\overline{x})+\varpi]$,
	\begin{equation}\label{KL1/2}
		{\rm dist}(0,\partial F(z))\ge2/c\big(F(z)-F(\overline{x})\big)^{1/2}.
	\end{equation}
	Since $F$ is continuous relative to ${\rm dom}\,g$, for all $x\in\mathbb{B}(\overline{x},\delta)\cap{\rm dom}\,g$ (if necessary by shrinking $\delta$), $F(x)\le F(\overline{x})+\varpi/2$.  
	Let $\varepsilon=\min\{\varepsilon',\delta\}$. Then, we claim that 
	\begin{equation}\label{ebound0-appendix}
		{\rm dist}(0,\partial F(x))\ge(\sqrt{2\alpha}/c){\rm dist}(x,\mathcal{X}^*)
		\quad{\rm for\ all}\ x\in\mathbb{B}(\overline{x},\varepsilon)\cap{\rm dom}\,g. 
	\end{equation}
	Pick any $x\in\mathbb{B}(\overline{x},\varepsilon)\cap{\rm dom}\,g$. If $F(x)=F(\overline{x})$, from the given quadratic growth condition, we deduce that $x\in\mathcal{X}^*$, and the claimed inequality holds for any $\kappa'>0$, so it suffices to consider that $F(x)\ne F(\overline{x})$. This along with the quadratic growth condition means that $F(x)>F(\overline{x})$. Together with $F(x)\le F(\overline{x})+\varpi/2$, we have $x\in[F(\overline{x})<F<F(\overline{x})+\varpi]\cap\mathbb{B}(\overline{x},\varepsilon')$. Then, from \eqref{KL1/2}, ${\rm dist}(0,\partial F(x))\ge(\sqrt{2\alpha}/c){\rm dist}(x,\mathcal{X}^*)$. 
	Thus, the claimed fact in \eqref{ebound0-appendix} holds. Let $\delta'=\min\{\varepsilon,\widetilde{\varepsilon}\}/(1\!+\!L')$ where $\widetilde{\varepsilon}$ and $L'$ are the same as in the proof of Lemma \ref{sregular-relation}. Now fix any $x\in\mathbb{B}(\overline{x},\delta')\cap{\rm dom}\,g$ and write $u:=R(x)=x-\mathcal{P}_g(x-\nabla\!f(x))$. Then, by noting that $x-u\in{\rm dom}\,g$ and following the same arguments as those for the first part of Lemma \ref{sregular-relation} with $q=1$, we obtain that ${\rm dist}(x,\mathcal{X}^*)\le \kappa'r(x)$ with $\kappa'=[1+(\sqrt{2\alpha}/c)(1+L')]$. The proof is completed.
\end{proof}
\section{Inexact regularized proximal Newton method}\label{sec3}

Now we describe our inexact regularized proximal Newton method (IRPNM) for solving \eqref{prob}. Let $x^k$ be the current iterate. As mentioned previously, we adopt $G_k$ defined in \eqref{def-G} to construct the quadratic approximation model \eqref{subprobx} at $x^k$. When $\psi$ is convex, this quadratic model reduces to the one used in \cite{Yue19,Mordu20}. Since $G_k$ is positive definite, the objective function of \eqref{subprobx} is strongly convex, so it has a unique minimizer, denoted by $\overline{x}^k$. For model \eqref{subprobx}, one may use the coordinate gradient descent method \cite{Tseng09} as in \cite{Yue19} or the line search PG method as in \cite{Kanzow21} to seek an approximate minimizer $y^k$. Inspired by the structure of $G_k$, we develop in Section \ref{sec5} a dual semismooth Newton augmented Lagrangian (SNALM) method to seek an approximate minimizer $y^k$.

To ensure that our IRPNM has a desirable convergence, we require $y^k$ to satisfy
\begin{subnumcases}{}\label{inexact-conda}
	r_k(y^k)\le\eta\min\big\{r(x^k),[r(x^k)]^{1+\tau}\big\}\ \ {\rm and}\ \
	\Theta_k(y^k)\le\Theta_k(x^k)\ \ {\rm if}\ \varrho\in(0,1);\\
	\label{inexact-condb}
	{\rm dist}(0,\partial \Theta_k(y^k))\le\eta r(x^k)
	\ \ {\rm and}\ \ \Theta_k(y^k)\le \Theta_k(x^k)\ \ {\rm if}\ \varrho=0,
\end{subnumcases}
where $\eta\in\!(0,1)$ and $\tau\!\ge\!\varrho$ are constants, 
and $r_k$ is the KKT residual of \eqref{subprobx} given by
\begin{equation}\label{rk-fun}
	r_k(y)\!:=\!\|R_k(y)\|\ \ {\rm with}\
	R_k(y)\!:=y\!-\!\mathcal{P}_g(y\!-\!\nabla\!f(x^k)\!-\!G_k(y\!-\!x^k))
	\ \ {\rm for}\ y\in\mathbb{R}^n.
\end{equation}
Though the following inequality holds for $r_k(\cdot)$ and ${\rm dist}(0,\partial\Theta_k(\cdot))$ by \cite[Lemma 4.1]{LiPong18}
\begin{equation}\label{rdist} 
	r_k(y)\le{\rm dist}(0,\partial\Theta_k(y))\quad{\rm for\ any}\ y\in{\rm dom}\,g,
\end{equation} 
there is no direct relation between the first inequality of \eqref{inexact-conda} and that of \eqref{inexact-condb}. Criterion \eqref{inexact-conda} is the same as the one used in \cite{Mordu20}, but is weaker than the one used in \cite{Yue19}. Indeed, let $\ell_k$ be the partial first-order approximation function of $F$ at $x^k$:
\begin{equation}\label{lk-fun}
	\ell_k(x):=f(x^k)+\langle\nabla\!f(x^k),x\!-\!x^k\rangle+g(x)\quad\ \forall x\in\mathbb{R}^n.
\end{equation}
One can verify that $\Theta_k(y^k)-\Theta_k(x^k)\le 0$
if $\Theta_k(y^k)-\Theta_k(x^k)\le\zeta(\ell_k(y^k)-\ell_k(x^k))$
for some $\zeta\in(0,1)$ by using the positive definiteness of $G_k$
and the following relation
\begin{equation}\label{Thetak-ellk}
	\Theta_k(x)-\Theta_k(x^k)=\ell_k(x)-\ell_k(x^k)+\frac{1}{2}(x-x^k)^{\top}G_k(x-x^k)\quad\ \forall x\in\mathbb{R}^n.
\end{equation} 
As will be shown in Lemma \ref{yk-define} below, the vector $y^k$ satisfying
\eqref{inexact-condb} is actually an exact minimizer of the canonical perturbation of \eqref{subprobx}. It seems that the distance involved in \eqref{inexact-condb} is difficult to compute in practice, but when choosing SNALM as the inner solver, one can easily achieve an element $\omega^k\in\partial\Theta_k(y^k)$ (see Section \ref{sec5.1.1}) and then an upper bound $\|\omega^k\|$ for ${\rm dist}(0,\partial \Theta_k(y^k))$. Thus, the first inequality of criterion \eqref{inexact-condb} is guaranteed to hold by requiring $\|\omega^k\|\le\eta r(x^k)$. It is worth pointing out that one can replace the first inequality in \eqref{inexact-condb} with $r_k(y^k)\le\eta r(x^k)$, but such $y^k$ become more inaccurate by \eqref{rdist}, which will lead to more iterations and running time.

With an inexact minimizer $y^k$ of subproblem \eqref{subprobx}, we perform the Armijo line search along the direction $d^k:=y^k-x^k$ to capture a step-size $\alpha_k>0$ so that the objective value of problem \eqref{prob} can gain a sufficient decrease. The algorithm steps into the next iterate with $x^{k+1}\!:=x^k\!+\!\alpha_kd^k$ if $F(x^k\!+\!\alpha_kd^k)<F(y^k)$, otherwise with $x^{k+1}\!:=y^k$. Now we are ready to summarize the iterate steps of our IRPNM.
\begin{algorithm}[h]
	\caption{\label{IRPNM}{\bf (Inexact regularized proximal Newton method)}}
	\textbf{Input:}\ $\epsilon_0>0$, $a_1\ge 1,a_2>0,\tau\ge\varrho\in[0,1),  	\eta,\beta\in(0,1),\sigma\in(0,1/2)$ and $x^0\in{\rm dom}\,g$.
	
	\medskip
	\noindent
	\textbf{For} $k=0,1,2,\ldots$ \textbf{do}
	\begin{enumerate}
		\item Compute the residual $r(x^k)$. If $r(x^k)\le\epsilon_0$, then stop; else go to Step 2.
		
		\item Seek an inexact minimizer $y^k$ of \eqref{subprobx} with $G_k$ in \eqref{def-G} so that it satisfies \eqref{inexact-conda} or \eqref{inexact-condb}.
		
		\item \label{stop}Set $d^k:=y^k\!-\!x^k$. If $\|d^k\|\le \epsilon_0$, then stop; 
		else go to Step 4.
		
		\item \label{ls-step} Seek the smallest $m_k$ among all nonnegative integers $m$ such that
		\begin{equation}\label{lsearch}
			F(x^k)-F(x^k\!+\!\beta^md^k)\ge\sigma\beta^m\mu_k\|d^k\|^2
			\ \ {\rm with}\ \mu_k:=a_2[r(x^k)]^{\varrho}.
		\end{equation}
		Set $\alpha_k:=\beta^{m_k}$ and
		\begin{equation}\label{new-iter}
			x^{k+1}:=\left\{\begin{array}{cl}
				y^k &{\rm if}\ F(y^k)<F(x^k\!+\!\alpha_kd^k),\\
				x^k\!+\!\alpha_kd^k &{\rm  otherwise}.
			\end{array}\right.
		\end{equation}
	\end{enumerate}
	\textbf{end (for)}
\end{algorithm}
\begin{remark}\label{remark-IRPNM}
	{\bf(a)} Differently from the proximal Newton methods in \cite{Yue19,Mordu20}, the next iterate $x^{k+1}$ of Algorithm \ref{IRPNM} may take $x^{k}+\alpha_kd^k$ or $y^k$, determined by their objective values. Note that a standard abstract convergence scheme adopted in the KL framework \cite{Attouch13} usually requires a relative error condition at the iterates, while such a selection allows us to employ the relative error condition at $y^k$ (which might not be the next iterate) in the proof of convergence, which is crucial to achieve the global convergence of the iterate sequence $\{x^k\}_{k\in\mathbb{N}}$ for $\varrho=0$ under the KL property of $F$ (see Theorem \ref{gconverge-KL}). To the best of our knowledge, such a technique first appears in \cite[Algorithm 1]{Bonettini17}.
	
	\noindent
	{\bf(b)} The line search criterion in \cite[Eq (7)]{Yue19} implies the one in equation \eqref{lsearch}. Indeed, equality \eqref{Thetak-ellk} and $\Theta_k(y^k)\le \Theta_k(x^k)$ in criterion \eqref{inexact-conda} or \eqref{inexact-condb} implies that
	\begin{equation}\label{ineq-lk}
		\ell_k(x^k)-\ell_k(y^k)\ge\frac{1}{2}\langle y^k-x^k,G_k(y^k\!-\!x^k)\rangle
		\ge\frac{1}{2}\mu_k\|y^k\!-\!x^k\|^2.
	\end{equation}
	Since $g$ is convex and $d^k=y^k\!-\!x^k$, for any $\tau\in[0,1]$ we have $x^k\!+\!\tau d^k\in{\rm dom}\,g$ and $\ell_k(x^k)-\ell_k(x^k\!+\!\tau d^k)\ge\tau[\ell_k(x^k)-\ell_k(x^k\!+\!d^k)]	=\tau[\ell_k(x^k)-\ell_k(y^k)]$, which by \eqref{ineq-lk} implies that $\ell_k(x^k)-\ell_k(x^k\!+\!\tau d^k)\ge\frac{1}{2}\tau \mu_k\|y^k\!-\!x^k\|^2$. Thus \eqref{lsearch} holds. This implication suggests that \eqref{lsearch} may need less computation time than the one in \cite{Yue19}.
	
	\noindent
	{\bf(c)} From \eqref{inexact-conda} or \eqref{inexact-condb} along with \eqref{rdist}, for each $k\in\mathbb{N}$, $r_k(y^k)\le\eta r(x^k)$, and then
	\begin{equation}\label{rk-ineq}
		(1\!-\!\eta)r(x^k)\le r(x^k)\!-\!r_k(y^k)\le 2\|d^k\|\!+\!\|G_k\|\|d^k\|,
	\end{equation}
	where the last inequality is using the expressions of $r(x^k)$ and $r_k(y^k)$. By the boundedness of $\{x^k\}_{k\in\mathbb{N}}$ (see Proposition \ref{prop-xk} below) and the expression of $G_k$ in \eqref{def-G}, we have $r(x^k)\le c_0\|d^k\|$ for some $c_0>0$. Thus, $d^k=0$ implies that $x^k$ is a stationary point of \eqref{prob}. This interprets why Algorithm \ref{IRPNM} also adopts $\|d^k\|\le\epsilon_0$ as a termination condition.
	
	\noindent
	{\bf(d)} When $H_k$ and $\eta_k$ in (S.1) of \cite[Algorithm 3.1]{Kanzow21} take $G_k$ and $\eta\min\big\{1,[r(x^k)]^{\tau}\big\}$, respectively, under the unit step-size and the existence of $\widehat{\kappa}>0$ and $\widetilde{k}_0\in\mathbb{N}$ such that  
	\begin{equation}\label{dk-rk}
		\|d^k\|\le\widehat{\kappa}[r(x^k)]^{\widehat{q}}\ \ (\widehat{q}>0)\quad{\rm for\ all}\ k\ge\widetilde{k}_0,
	\end{equation}
	the sequence $\{x^k\}_{k\ge\widetilde{k}_0}$ generated by \cite[Algorithm 3.1]{Kanzow21} is the same as the one yielded by Algorithm \ref{IRPNM} with $\varrho\in(0,1)$. Indeed, using the same arguments as those for Lemma \ref{yk-define} below, one can show that the inexactness criterion in \cite[Eq\,(13)]{Kanzow21} is well defined. In addition, by using equation \eqref{Thetak-ellk}, $G_k\succeq a_2[r(x^k)]^{\varrho}I$ and condition \eqref{dk-rk}, it follows that 
	\begin{align*}
		0\ge\Theta_k(y^k)-\Theta_k(x^k)
		&\ge\ell_k(y^k)-\ell_k(x^k) +(a_2/2)[r(x^k)]^{\varrho}\|d^k\|^2\\
		&\ge \ell_k(y^k)-\ell_k(x^k)+(a_2/2)\widehat{\kappa}^{-\frac{\varrho}{\widehat{q}}}\|d^k\|^{2+\frac{\varrho}{\widehat{q}}}\quad{\rm for\ all}\ k\ge\widetilde{k}_0.
	\end{align*}
	This implies that \cite[condition (14)]{Kanzow21} is satisfied with $\rho=\frac{1}{2}a_2\widehat{\kappa}^{-\frac{\varrho}{\widehat{q}}}$ and $p=2+{\varrho}/{\widehat{q}}$ when $k\ge\widetilde{k}_0$, so the iterates generated by \cite[Algorithm 3.1]{Kanzow21} for $k\ge\widetilde{k}_0$ are the same as those yielded by Algorithm \ref{IRPNM} with $\varrho\in(0,1)$.  
    This fact, along with the convergence analysis in Section \ref{sec4.2}, shows that the global convergence and superlinear rate results of \cite[Algorithm 3.1]{Kanzow21} there can be achieved under weaker conditions (see Remark \ref{remark-converge}).
	
	\noindent
	 {\bf(e)} A stepsize $t>0$ is generally introduced into the definition of KKT residual of \eqref{prob} as $R_{t}(x)\!:=t^{-1}[x\!-\!\mathcal{P}_{tg}(x\!-\!t\nabla\!f(x))]$, and similarly a stepsize $t_k>0$ is done for the KKT residual of \eqref{subprobx} as $R_{t_k}(x)\!:=t_k^{-1}[x\!-\!\mathcal{P}_{t_kg}(x\!-t_k(\nabla\!f(x^k)+G_k(x-\!x^k)))]$. In practice, one can search for $t_k$ via backtracking by the descent lemma (see Lemma \ref{lemma-descent}). Concretely, with an initial lower estimate $L\!>0$ for the Lipschitz modulus of $\nabla\!f$ at $x^k$ and a ratio $\alpha>1$, the following if-end sentence can be added in step 1 before calculating $r(x^k)$:
		
		{\bf If} $f(z)\!>\!f(x^k)+\langle\nabla\!f(x^k),z-\!x^k\rangle\!+\!\frac{L}{2}\|z\!-\!x^k\|^2$ for $z\!=\mathcal{P}_{L^{-1}g}(x^k\!-\!L^{-1}\nabla\!f(x^k))$, {\bf then}
		
		\quad $L\leftarrow\alpha L$,
		
		{\bf end if} \\
		and set $t_k:=1/L$. After testing Algorithm \ref{IRPNM} with such KKT residuals, we found that it cannot improve the performance of Algorithm \ref{IRPNM}, and even requires more running time for some test examples. This phenomenon is reasonable by noting that our approximation matrix $G_k$ actually also plays the role of variable metric. Taking into account this, we simply take the unit stepsize for the functions $r$ and $r_k$ throughout this paper.
	
\end{remark}

Before analyzing the convergence of Algorithm \ref{IRPNM}, we need to verify that it is well defined, i.e., arguing that the inexactness conditions in \eqref{inexact-conda} and \eqref{inexact-condb} are feasible and the line search in \eqref{lsearch} will terminate in a finite number of steps.
\begin{lemma}\label{yk-define}
	For each iterate of Algorithm \ref{IRPNM} with $\epsilon_0=0$, the criterion \eqref{inexact-conda} is satisfied by any $y\in{\rm dom}\,g$ sufficiently close to the exact solution $\overline{x}^k$ of \eqref{subprobx}, the criterion \eqref{inexact-condb} is satisfied by $y_u:=\mathcal{P}_g(y\!-\!\nabla f(x^k)-G_k(y\!-\!x^k))$ for any $y$ sufficiently close to $\overline{x}^k$, and there exists an integer $m_k\ge 0$ such that the descent condition in \eqref{lsearch} is satisfied.
\end{lemma}
\begin{proof}
	Consider the iterate $x^k$. Assume that $x^k$ is not a stationary point of \eqref{prob}, i.e., $r_k(x^k)>0$. From the continuity of $r_k$ and $r_k(\overline{x}^k)=0$, for any $y$ sufficiently close to $\overline{x}^k$, $r_k(y)\le\eta\min\{r(x^k),[r(x^k)]^{1+\tau}\}$. In addition, since $\overline{x}^k$ is the unique optimal solution of \eqref{subprobx}, $G_k(x^k\!-\!\overline{x}^k)\in\partial\ell_k(\overline{x}^k)$.
	This implies that $x^k\ne\overline{x}^k$ (if not, $0\in\partial\ell_k(x^k)$ 
	and $x^k$ is a stationary point of \eqref{prob}). By the convexity of $\ell_k$,
	$\ell_k(x^k)\ge\ell_k(\overline{x}^k) +\langle G_k(x^k\!-\!\overline{x}^k),x^k\!-\!\overline{x}^k\rangle$, which along with \eqref{Thetak-ellk} implies that
	\begin{align*}
		\Theta_k(\overline{x}^k)-\Theta_k(x^k)&=\ell_k(\overline{x}^k)-\ell_k(x^k)
		+(1/2)\langle x^k\!-\!\overline{x}^k,G_k(x^k\!-\!\overline{x}^k)\rangle\\
		&\le -(1/2)\langle x^k\!-\!\overline{x}^k,G_k(x^k\!-\!\overline{x}^k)\rangle\le
		-(1/2)\mu_k\|x^k\!-\!\overline{x}^k\|^2<0.
	\end{align*}
	Since $\Theta_k$ is continuous relative to ${\rm dom}\,g$ by Assumption \ref{ass-1} (ii), the last inequality means that for any $y\in{\rm dom}\,g$ sufficiently close to $\overline{x}^k$, $\Theta_k(y)\le \Theta_k(x^k)$. The two sides show that criterion \eqref{inexact-conda} is satisfied by any $y\in{\rm dom}\,g$ sufficiently close to $\overline{x}^k$.
	
	For the first condition in \eqref{inexact-condb}, consider problems \eqref{prob1} and \eqref{u-prob} with $\vartheta$ given by  
	\[
	\vartheta(x)\!:=f(x^k)+\langle\nabla\!f(x^k),x-x^k\rangle+\frac{1}{2}(x-x^k)^{\top}G_k(x-x^k)
	\ \ {\rm for}\ x\in\mathbb{R}^n.
	\]
    Let $R_{\vartheta}(x)\!=x-\!\mathcal{P}_g(x-\!\nabla\vartheta(x))$ be as in Proposition \ref{prop-stationary}.
	Since $\overline{x}^k\!=\mathcal{P}_g(\overline{x}^k\!-\!\nabla\vartheta(\overline{x}^k))$,
	by the continuity of $\nabla\vartheta$, for any $y$ sufficiently close to $\overline{x}^k$, 
	\[
	\|R_{\vartheta}(y)+\!\nabla\vartheta(\mathcal{P}_g(y-\!\nabla\vartheta(y)))
	-\!\nabla\vartheta(y)\|\le\eta\min\big\{r(x^k),[r(x^k)]^{1+\tau}\big\}.
	\]
	With such $y$, by letting $y_{u}=\mathcal{P}_g(y\!-\!\nabla\vartheta(y))$ and using Proposition \ref{prop-stationary}, it follows that
	${\rm dist}(0,\partial\Theta_k(y_{u}))\!\le\!\eta\min\{r(x^k),[r(x^k)]^{1+\tau}\}$. 
	In addition, from the above discussions, the second condition in \eqref{inexact-condb} hold for any $z\in{\rm dom}\,g$ sufficiently close to $\overline{x}^k$. Note that $y_{u}\in{\rm dom}\,g$ and $y_u$ is close to $\overline{x}^k$ as $y$ is sufficiently close to $\overline{x}^k$. These two sides demonstrate that $y_u$ satisfies the criterion \eqref{inexact-condb} for any $y$ sufficiently close to $\overline{x}^k$.
	
	For the last part, we only need to consider that $d^k\ne 0$. From the convexity of $g$ and the definition of directional derivative, it follows that
	\begin{align*}
		F'(x^k;d^k)&=\langle\nabla\!f(x^k),d^k\rangle+g'(x^k;d^k)
		\le\langle\nabla\!f(x^k),d^k\rangle+g(y^k)-g(x^k)\\
		&=\ell_k(y^k)-\ell_k(x^k)\le-({\mu_k}/{2})\|y^k\!-\!x^k\|^2
		=-({\mu_k}/{2})\|d^k\|^2<0,
	\end{align*}
	where the first inequality is using \cite[Theorem 23.1]{Roc70} and $d^k=y^k-x^k$, and the second inequality is using \eqref{ineq-lk}. Together with the definition
	of directional derivative,
	\[
	F(x^k\!+\!t d^k)-F(x^k)\le-t\big[({\mu_k}/{2})\|d^k\|^2-o(t)\big].
	\]
	This implies that the line search step in \eqref{lsearch} is well defined. 
\end{proof}

To close this section, we summarize some properties of the sequence $\{x^k\}_{k\in\mathbb{N}}$ generated by Algorithm \ref{IRPNM} under the following assumption on a level set of $F$.
	\begin{assumption}\label{ass1}
		The level set $\mathcal{L}_{F}(x^0)\!:=\!\{x\in{\rm dom}\,g\ |\ F(x)\le F(x^0)\}$ is bounded.
\end{assumption}

\begin{proposition}\label{prop-xk}
	Let $\{x^k\}_{k\in\mathbb{N}}$ be a sequence generated by Algorithm \ref{IRPNM} with $\epsilon_0=0$, and denote by $\omega(x^0)$ its cluster point set.  Then, under Assumption \ref{ass1},
	\begin{description}
		\item [(i)] the sequence $\{F(x^k)\}_{k\in\mathbb{N}}$ is nonincreasing and convergent;
		
		\item [(ii)] the sequence $\{x^k\}_{k\in\mathbb{N}}$ is bounded;  
		
		\item[(iii)] $\lim_{k\to\infty}r(x^k)=0$ and $\lim_{k\to\infty}\|d^k\|=0$;
		
		\item [(iv)] $\omega(x^0)\subset\mathcal{S}^*$ is a nonempty compact set and  $F\equiv \overline{F}\!:={\displaystyle\lim_{k\to\infty}}F(x^k)$ on $\omega(x^0)$.		
	\end{description}
\end{proposition}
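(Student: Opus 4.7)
Parts (i) and (ii) follow quickly from how $x^{k+1}$ is formed. Since both branches of \eqref{new-iter} satisfy $F(x^{k+1}) \le F(x^k + \alpha_k d^k)$, the Armijo rule \eqref{lsearch} forces $F(x^{k+1}) \le F(x^k) - \sigma \alpha_k \mu_k \|d^k\|^2 \le F(x^k)$, so $\{F(x^k)\}$ is nonincreasing; combined with $\inf F > -\infty$ from Assumption \ref{ass-1}(iii), it converges to some $\overline{F}$. Part (ii) then follows because $F(x^k) \le F(x^0)$ puts every iterate in $\mathcal{L}_F(x^0)$, which is bounded by the level-boundedness assumption.

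Part (iii) is the heart of the proof. Telescoping the Armijo inequality yields $\sum_{k\ge 0} \sigma \alpha_k \mu_k \|d^k\|^2 \le F(x^0) - \overline{F} < \infty$. I would then lower-bound $\alpha_k$ by a standard descent-lemma argument: combining the Lipschitz continuity of $\nabla f$ on a compact superset of $\{x^k\} \cup \{y^k\}$, the convexity of $g$, and the bound $\ell_k(y^k) - \ell_k(x^k) \le -\tfrac{\mu_k}{2}\|d^k\|^2$ (derivable from \eqref{Thetak-ellk} together with $\Theta_k(y^k) \le \Theta_k(x^k)$) shows that the Armijo criterion is met whenever $\alpha \le (1-2\sigma)\mu_k/L$, so $\alpha_k \ge \min\{1, \beta(1-2\sigma)\mu_k/L\}$. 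Inserting this back gives $\sum_k \min(\mu_k,\mu_k^2)\|d^k\|^2 < \infty$. For $\varrho = 0$, $\mu_k \equiv a_2$, so $\sum_k \|d^k\|^2 < \infty$, whence $\|d^k\| \to 0$, and Remark \ref{remark-IRPNM}(b)'s inequality $r(x^k) \le c_0\|d^k\|$ delivers $r(x^k) \to 0$. For $\varrho \in (0,1)$ the same summability gives $\mu_k\|d^k\| \to 0$; combined with $r(x^k) \le c_0\|d^k\|$ this produces $(a_2/c_0) r(x^k)^{\varrho+1} \le \mu_k\|d^k\| \to 0$, hence $r(x^k) \to 0$. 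To extract $\|d^k\| \to 0$ in this regime I would interpolate through the exact minimizer $\bar{x}^k$ of \eqref{subprobx}, controlling $\|y^k - \bar{x}^k\|$ via the sharper inexactness condition \eqref{inexact-conda} (which gives $r_k(y^k) \le \eta r(x^k)^{1+\tau}$) and $\|x^k - \bar{x}^k\|$ through the outer semicontinuity of $\partial F$ at stationary cluster points.

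Part (iv) is a continuity exercise built on (ii) and (iii). Boundedness makes $\omega(x^0)$ nonempty, and writing it as $\bigcap_N \overline{\{x^k : k \ge N\}}$ exhibits it as compact. For any $x^{\star} \in \omega(x^0)$, pick $x^{k_j} \to x^{\star}$; continuity of the residual $R$ on $\mathcal{O}$ with $r(x^{k_j}) \to 0$ forces $r(x^{\star}) = 0$, i.e., $x^{\star} \in \mathcal{S}^*$, while continuity of $F$ on $\mathrm{dom}\,g$ (from Assumption \ref{ass-1}(i) and (ii)) together with $F(x^{k_j}) \to \overline{F}$ gives $F(x^{\star}) = \overline{F}$. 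The main technical hurdle of the whole argument is the second half of (iii) in the case $\varrho \in (0,1)$: because $\mu_k \to 0$, $\|d^k\|$ cannot be controlled directly by $r(x^k)$, and one must pass through the subproblem's exact solution, exploiting the tighter inexactness criterion to close the argument.
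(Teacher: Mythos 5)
Most of your argument tracks the paper's proof: parts (i), (ii), (iv), the telescoping of the Armijo inequality, the lower bound $\alpha_k\ge\min\{1,\beta(1-2\sigma)\mu_k/L\}$ obtained by contradiction from a failed backtracking step, and the use of $r(x^k)\le c_0\|d^k\|$ (i.e.\ \eqref{rk-ineq}) to convert $\min(\mu_k,\mu_k^2)\|d^k\|^2\to0$ into $r(x^k)\to0$ are all exactly what the paper does. The one genuine gap is the final step of (iii) for $\varrho\in(0,1)$: your plan to bound $\|x^k-\overline{x}^k\|$ ``through the outer semicontinuity of $\partial F$ at stationary cluster points'' does not work. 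Outer semicontinuity is a qualitative property and yields no quantitative estimate on the distance from $x^k$ to the minimizer of $\Theta_k$; worse, the stationarity of cluster points is part (iv), which in this proposition is derived \emph{from} (iii), so invoking it here is circular. The difficulty is real because the strong-convexity modulus $\mu_k=a_2[r(x^k)]^{\varrho}$ of $\Theta_k$ vanishes, so the minimizer does not depend on the data in a uniformly stable way.

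The correct mechanism — which the paper uses and which also repairs your interpolation through $\overline{x}^k$ — is the monotonicity of $\partial g$ combined with $G_k\succeq\mu_k I$. The paper pairs the two inclusions $R(x^k)-\nabla\!f(x^k)\in\partial g(x^k-R(x^k))$ and $R_k(y^k)-\nabla\!f(x^k)-G_kd^k\in\partial g(y^k-R_k(y^k))$ to obtain $\mu_k\|d^k\|^2\le(1+\|G_k\|)(r_k(y^k)+r(x^k))\|d^k\|$, hence
\[
\|d^k\|\le a_2^{-1}(1+\eta)(1+\|G_k\|)\,[r(x^k)]^{1-\varrho},
\]
which gives $\|d^k\|\to0$ immediately from $r(x^k)\to0$ since $\varrho<1$; no detour through $\overline{x}^k$ and no inexactness condition beyond $r_k(y^k)\le\eta r(x^k)$ is needed. (If you prefer the split $\|d^k\|\le\|y^k-\overline{x}^k\|+\|x^k-\overline{x}^k\|$, the same monotonicity argument applied to $\overline{x}^k$ in place of $y^k-R_k(y^k)$ gives $\|x^k-\overline{x}^k\|\le a_2^{-1}(1+\|G_k\|)[r(x^k)]^{1-\varrho}$, which is the quantitative bound your sketch is missing.) Note also that the paper establishes this inequality \emph{before} proving $r(x^k)\to0$ and reuses it there, whereas your route to $r(x^k)\to0$ via $r(x^k)\le c_0\|d^k\|$ is independent of it and is fine as written.
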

\begin{proof}
	{\bf(i)-(ii)} For each $k\in\mathbb{N}$, from \eqref{lsearch} and \eqref{new-iter}, we have $F(x^{k+1})<F(x^k)$ and then $\{x^k\}_{k\in\mathbb{N}}\subset\mathcal{L}_{F}(x^0)$. Along with the boundedness of $\mathcal{L}_{F}(x^0)$, we get parts (i) and (ii). 
	
	\noindent
	{\bf(iii)} For each $k\in\mathbb{N}$, let $u^k\!=x^k\!-\mathcal{P}_g(x^k\!-\nabla\!f(x^k))$. 
	Then $u^k\!-\!\nabla\!f(x^k)\in\partial g(x^k\!-\!u^k)$.
	By the expression of $R_k$, 
	$R_k(y^k)\!-\!\nabla\!f(x^k)\!-\!G_k(y^k\!-\!x^k)\in\partial g(y^k\!-\!R_k(y^k))$.
	Using the monotonicity of $\partial g$, we have 
	$\langle R_k(y^k)-G_kd^k-u^k,d^k-R_k(y^k)+u^k\rangle\ge0$ or 
	\[
	\langle d^k,G_kd^k\rangle
	\le\langle R_k(y^k)\!-\!u^k,d^k\!-\!R_k(y^k)+u^k\!+\!G_kd^k\rangle
	\le\langle(I+G_k)d^k,R_k(y^k)\!-\!u^k\rangle.
	\]
	From the expression of $G_k$ in \eqref{def-G} and $\mu_k=a_2[r(x^k)]^{\varrho}$, $G_k\succeq \mu_kI$, which together with the last inequality and the expression of $R_k(y^k)$ implies that  
	\[
	a_2r(x^k)^{\varrho}\|d^k\|^2\le(r_k(y^k)+r(x^k))(1\!+\!\|G_k\|)\|d^k\|
	\le(1\!+\!\eta)(1\!+\!\|G_k\|)r(x^k)\|d^k\|,  
	\]
	where the last inequality is using $r_k(y^k)\le\eta r(x^k)$ obtained in Remark \ref{remark-IRPNM} (c). Then,  
	\begin{equation}\label{dk-bound} 
		\|d^k\|\le a_2^{-1}(1+\eta)(1+\|G_k\|)r(x^k)^{1-\varrho}
		\quad{\rm for\ each}\ k\in\mathbb{N}.
	\end{equation}
	By part (ii) and the continuity of $r$, there exists 
	a constant $\widehat{\tau}>0$ such that $\|d^k\|\le\widehat{\tau}$ for all $k\in\mathbb{N}$. Let $\mathbb{B}$ denote the unit ball of $\mathbb{R}^n$ centered at the origin. 
	Assumption \ref{ass-1} (i) implies that $\nabla\!f$ is Lipschitz continuous on the compact set $\mathcal{L}_F(x^0)+\widehat{\tau}\mathbb{B}$ with Lipschitz constant, say, $L_{\nabla\!f}$. 
	Then, for any $x',x\in\mathcal{L}_F(x^0)+\widehat{\tau}\mathbb{B}$,  
	\begin{equation}\label{grad-Lip}
		\|\nabla\!f(x)-\nabla\!f(x')\|\le L_{\nabla\!f}\|x-x'\|.
	\end{equation} 
	Let $\mathcal{K}\!:=\{k\in\mathbb{N}\ |\ \alpha_k<1\}$. Fix any $k\in\mathcal{K}$.  Since \eqref{lsearch} is violated for the stepsize $t_k:=\alpha_k/\beta$, from the convexity of $g$ and the definition of $\ell_k$, it follows that
	\begin{align}\label{temp-dkineq0}
		\sigma\mu_k t_k\|d^k\|^2
		&>f(x^k)-f(x^k+t_kd^k)+g(x^k)-g(x^k+t_kd^k) \nonumber\\
		&\ge f(x^k)-f(x^k+t_kd^k)+t_k(g(x^k)-g(y^k)) \nonumber\\
		&= f(x^k)-f(x^k+t_kd^k)+t_k(\langle\nabla\!f(x^k),
		d^k\rangle+\ell_k(x^k)-\ell_k(y^k))\nonumber\\
		&= t_k\langle\nabla\!f(x^k)-\nabla\!f(\xi^k),d^k\rangle + t_k(\ell_k(x^k)-\ell_k(y^k)) \nonumber\\
		&\ge t_k\langle\nabla\!f(x^k)-\nabla\!f(\xi^k),d^k\rangle + (t_k\mu_k/2)\|d^k\|^2
	\end{align}
	for some $\xi^k\in(x^k,x^k+t_kd^k)$, where the last equality is due to the mean-value theorem, and the last inequality comes from \eqref{ineq-lk}. 
	Combining \eqref{temp-dkineq0} and \eqref{grad-Lip} leads to 
	\[  
	(1/2\!-\!\sigma)t_k\mu_k\|d^k\|^2\le t_k\|\nabla f(\xi^k)\!-\!\nabla f(x^k)\|\|d^k\|
	\le L_{\nabla\!f}t_k^2\|d^k\|^2,
	\]	
	which implies that $t_k\ge\frac{1-2\sigma}{2L_{\nabla\!f}}\mu_k$. By the arbitrariness of  $k\in\mathcal{K}$, $\alpha_k\ge\min\{1,\frac{1-2\sigma}{2L_{\nabla\!f}}\beta\mu_k\}$ for all $k\in\mathbb{N}$. While from \eqref{lsearch} and part (i), $\lim_{k\to\infty}\alpha_k\mu_k\|d^k\|^2=0$.
	Thus, $\lim_{k\to\infty}\min\big\{\mu_k,(1\!-\!2\sigma)\beta (2L_{\nabla\!f})^{-1}\mu_k^2\big\}\|d^k\|^2=0$.
	Recall that $\|d^k\|\ge\frac{1-\eta}{2+\|G_k\|}r(x^k)$ by \eqref{rk-ineq} and $\mu_k=a_2[r(x^k)]^{\varrho}$. The boundedness of $\{\|G_k\|\}$ then implies that $\lim_{k\to\infty}r(x^k)=0$. 
	Together with \eqref{dk-bound}, it follows that $\lim_{k\to\infty}\|d^k\|=0$. 
	
	\noindent
	{\bf(iv)} By part (ii), the set $\omega(x^0)$ is nonempty and compact. Pick any $\overline{x}\in\omega(x^0)$. There is an index set $\mathcal{K}\subset\mathbb{N}$ such that $\lim_{\mathcal{K}\ni k\to\infty}x^{k}=\overline{x}$.
	From part (iii) and the continuity of $r$, we have
	$\overline{x}\in\mathcal{S}^*$, and then $\omega(x^0)\subset\mathcal{S}^*$.   
	Note that $\{x^k\}_{k\in K}\subset{\rm dom}\,g$ and $F$ is continuous relative to ${\rm dom}\,g$ by Assumption \ref{ass-1} (i)-(ii). Then, $F(\overline{x})=\overline{F}$, which shows that 
	$F$ is constant on the set $\omega(x^0)$.
\end{proof}
\section{Convergence analysis of Algorithm \ref{IRPNM}}\label{sec4}
This section focuses on the asymptotic convergence behaviour of Algorithm \ref{IRPNM}. To this end, we assume that the sequence $\{x^k\}_{k\in\mathbb{N}}$ is generated by Algorithm \ref{IRPNM} with $\epsilon_0=0$.

\subsection{Convergence analysis for $\varrho=0$}\label{sec4.1}
First, by using the first condition in \eqref{inexact-condb}, we bound ${\rm dist}(0,\partial F(y^k))$ in terms of $\|d^k\|$. 
\begin{lemma}\label{relgap}
	Under Assumption \ref{ass1}, for each $k\in\mathbb{N}$, there exists
	$w^k\in\partial F(y^k)$ with $\|w^k\|\le\gamma_0\|d^k\|$ for $\gamma_0\!=L_{\nabla\!f}\!+\widetilde{\gamma}+{\eta(2\!+\!\widetilde{\gamma})}/{(1\!-\!\eta)}$, where $\widetilde{\gamma}=L_{\nabla\!f}+a_1c_{\psi}\|A\|^2+a_2$ with $c_{\psi}\!:=\max_{x\in\mathcal{L}_{F}(x^0)}[-\lambda_{\rm min}(\nabla^2\psi(Ax\!-\!b))]_{+}$, and $L_{\nabla\!f}$ is the constant appearing in \eqref{grad-Lip}.
\end{lemma}
\begin{proof}
	Fix any $k\in\mathbb{N}$. Since ${\rm dist}(0,\partial\Theta_k(y^k))\le\eta r(x^k)$ by \eqref{inexact-condb}, there exists $\xi^k\in\partial\Theta_k(y^k)$ with $\|\xi^k\|\le\eta r(x^k)$. Let $w^k\!:=\xi^k\!+\!\nabla\!f(y^k)\!-\!\nabla\!f(x^k)-G_k(y^k\!-\!x^k)$. From $\xi^k\in\partial\Theta_k(y^k)$, we have $w^k\in\partial F(y^k)$. By the expression of $w^k$ and \eqref{grad-Lip}, it holds that  
	\begin{align}\label{wk-ineq}
		\|w^k\|&\le\|G_k(x^k\!-\!y^k)\|+\|\xi^k+\nabla\!f(y^k)\!-\!\nabla\!f(x^k)\|\nonumber\\
		&\le \big[\|G_k\|+L_{\nabla\!f}\big]\|d^k\|+\eta r(x^k).
	\end{align}
	From the expression of $G_k$ for $\varrho=0$ and \eqref{grad-Lip}, it follows that $\|G_k\|\le \widetilde{\gamma}$. Together with \eqref{rk-ineq}, $r(x^k)\le (1-\eta)^{-1}(2+\widetilde{\gamma})\|d^k\|$. The desired result then follows by \eqref{wk-ineq}. 
\end{proof}

With Lemma \ref{relgap}, the following theorem establishes the convergence of the sequence $\{x^k\}_{k\in\mathbb{N}}$ under the KL assumption on $F$. Due to the selection scheme in \eqref{new-iter}, the analysis technique is a little different from the common one adopted in \cite{Attouch10,Attouch13}.  
\begin{theorem}\label{gconverge-KL}
	If $F$ is a KL function, then under Assumption \ref{ass1},  
	$\sum_{k=0}^{\infty}\|x^{k+1}\!-\!x^k\|<\infty$ and the sequence $\{x^k\}_{k\in\mathbb{N}}$ converges to a stationary point of \eqref{prob}.
\end{theorem}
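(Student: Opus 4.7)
The plan is to invoke the Attouch--Bolte--Svaiter finite-length scheme for the KL setting, adapted so that the subgradient bound is read off at $y^{k-1}$ rather than at $x^k$. Three ingredients are needed: (I) sufficient descent of $F$ along $\{x^k\}$, (II) a subgradient bound of the form ${\rm dist}(0,\partial F(y^k))\le c_2\|d^k\|$, and (III) the KL inequality at cluster points of $\{x^k\}$. Combining them will yield $\sum\|x^{k+1}-x^k\|<\infty$, so $\{x^k\}$ is Cauchy and its unique limit is a stationary point (by Proposition~\ref{prop-xk}(iv)).

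For (I), since $\varrho=0$ gives $\mu_k\equiv a_2$, the step-size analysis in the proof of Proposition~\ref{prop-xk}(iii) furnishes a uniform lower bound $\alpha_k\ge\underline{\alpha}:=\min\{1,(1-2\sigma)\beta a_2/(2L_{\!f})\}>0$. Substituting into the Armijo inequality \eqref{lsearch} and using $\|x^{k+1}-x^k\|\le\|d^k\|$ yields $F(x^k)-F(x^{k+1})\ge c_1\|d^k\|^2$ with $c_1:=\sigma\underline{\alpha}a_2>0$; writing $\Delta_k:=F(x^k)-\bar F\ge 0$, this reads $\Delta_k-\Delta_{k+1}\ge c_1\|d^k\|^2$. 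For (II), the inexactness criterion \eqref{inexact-condb} supplies $\xi^k\in\partial\Theta_k(y^k)$ with $\|\xi^k\|\le\eta\,r(x^k)$, so $v^k:=\xi^k+\nabla\!f(y^k)-\nabla\!f(x^k)-G_k(y^k-x^k)\in\partial F(y^k)$; using \eqref{grad-Lip}, the uniform bound $\|G_k\|\le L_{\!f}+a_1c_{\psi}\|A\|^2+a_2$ (valid for $\varrho=0$), and $r(x^k)\le c\|d^k\|$ from \eqref{rk-ineq}, one obtains ${\rm dist}(0,\partial F(y^k))\le c_2\|d^k\|$ for a suitable $c_2>0$. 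For (III), fix any $\bar x\in\omega(x^0)\subseteq\mathcal{S}^*$; Remark~\ref{KL-assump} shows Assumption~\ref{ass0} is automatic, and Proposition~\ref{prop-xk}(iv) gives $F\equiv\bar F$ on $\omega(x^0)$ together with $\Delta_k\downarrow 0$.

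The main obstacle is the mismatch between (I) (stated at $x^k$) and (II) (stated at $y^k$): the classical scheme expects the subgradient bound at the iterate itself. My resolution rests on the observation that $F(x^k)=\min\{F(y^{k-1}),F(x^{k-1}+\alpha_{k-1}d^{k-1})\}\le F(y^{k-1})$ by the algorithm's choice \eqref{new-iter}, so $F(y^{k-1})-\bar F\ge\Delta_k$. Since $\varphi\in\Upsilon_{\!\varpi}$ is concave with $\varphi'>0$, the function $\varphi'$ is nonincreasing; the KL inequality applied at $y^{k-1}$ combined with (II) then gives
\[
\varphi'(\Delta_k)\ \ge\ \varphi'(F(y^{k-1})-\bar F)\ \ge\ \frac{1}{c_2\|d^{k-1}\|}.
\]
Combining with the concavity inequality $\varphi(\Delta_k)-\varphi(\Delta_{k+1})\ge\varphi'(\Delta_k)(\Delta_k-\Delta_{k+1})$ and (I) produces $\varphi(\Delta_k)-\varphi(\Delta_{k+1})\ge c_1\|d^k\|^2/(c_2\|d^{k-1}\|)$, whence AM--GM gives
\[
\|d^k\|\ \le\ \tfrac12\|d^{k-1}\|+\tfrac{c_2}{2c_1}\bigl[\varphi(\Delta_k)-\varphi(\Delta_{k+1})\bigr].
\]

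A standard continuation argument --- choose $k_0$ large enough that $x^{k_0}$ lies near $\bar x$ (along a subsequence guaranteed by Proposition~\ref{prop-xk}(iv)) and $y^{k-1}$ for all $k\ge k_0$ stays in a KL neighborhood of $\bar x$, using $\|y^{k-1}-x^{k-1}\|=\|d^{k-1}\|\to 0$ together with summability of $\{\|d^k\|\}$ for the inductive step --- closes the telescoping and yields $\sum_{k\ge k_0}\|d^k\|\le\|d^{k_0-1}\|+(c_2/c_1)\varphi(\Delta_{k_0})<\infty$. Since $\|x^{k+1}-x^k\|\le\|d^k\|$, the iterate sequence is Cauchy and converges to its unique cluster point, which lies in $\mathcal{S}^*$. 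The degenerate case $\Delta_{k_0}=0$ for some $k_0$ is handled trivially: (I) then forces $d^{k_0}=0$, so $x^{k_0}$ is already stationary.
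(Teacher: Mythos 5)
Your proof is correct and follows the same Attouch--Bolte--Svaiter skeleton as the paper -- same descent inequality with the uniform step-size bound $\underline{\alpha}$, the same subgradient element $\xi^k+\nabla\!f(y^k)-\nabla\!f(x^k)-G_k(y^k-x^k)\in\partial F(y^k)$, the same use of $F(x^k)\le F(y^{k-1})$ with the monotonicity of $\varphi'$, and the same AM--GM telescoping -- but it differs in one genuine respect: where you apply the KL inequality. The paper lifts the problem to the product space $\mathbb{X}=\mathbb{R}^n\times\mathbb{R}^n\times\mathbb{S}^n$, introduces the potential $\Phi(x,y,H)=F(y)+\frac12\langle x-y,H(x-y)\rangle+\delta_{\mathbb{S}_+^n}(H)$, shows $|\!\|w^{k}|\!\|\le\gamma_0\|d^{k}\|$ for some $w^k\in\partial\Phi(x^k,y^k,G_k)$ (Lemma \ref{relgap}), and invokes the uniformized KL property of $\Phi$ over the compact set $\Omega=\{(x,x,G(x)):x\in\omega(x^0)\}$ -- which requires the (asserted, not fully detailed) fact that $\Phi$ inherits the KL property from $F$. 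You instead apply the KL inequality of $F$ itself at the inexact minimizers $y^{k-1}$, which sidesteps the product-space construction and the inheritance claim entirely; the price is that you must verify $y^{k-1}$ lies in the KL neighborhood, i.e.\ both $\|y^{k-1}-\overline{x}\|\le\delta$ (which you handle via $\|d^{k-1}\|\to0$ and the induction) and $F(\overline{x})<F(y^{k-1})<F(\overline{x})+\varpi$. The lower bound follows from $F(y^{k-1})\ge F(x^k)>\overline{F}$, but you do not explicitly check the upper bound; it does hold, since the descent lemma and $\Theta_{k-1}(y^{k-1})\le\Theta_{k-1}(x^{k-1})$ give $F(y^{k-1})\le F(x^{k-1})+\frac{L_f}{2}\|d^{k-1}\|^2\to\overline{F}$, so this is a one-line patch (and is at the same level of detail as the paper's own unproved claim that $\Phi(x^k,y^k,G_k)\to\overline{F}$). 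With that detail supplied, your argument is a clean and arguably simpler proof of the same result.
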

\begin{proof}
	If there exists $\overline{k}_1\in\mathbb{N}$ such that
	$F(x^{\overline{k}_1})=F(x^{{\overline{k}_1}+1})$, we have $d^{\overline{k}_1}=0$ by step \ref{ls-step}, and Algorithm \ref{IRPNM} stops within a finite number of steps. In this case, $r(x^{\overline{k}_1})=0$ follows from \eqref{rk-ineq}, i.e., $x^{\overline{k}_1}$ is a stationary point of \eqref{prob}. Hence, it suffices to consider that $F(x^k)>F(x^{k+1})$ for all $k\in\mathbb{N}$. By invoking equation \eqref{new-iter}, for each $k\in\mathbb{N}$,
	\begin{equation}\label{relationPhi}
		F(y^k)-\overline{F}\ge F(x^{k+1})-\overline{F}>0,
	\end{equation}
	where $\overline{F}$ is the same as in Proposition \ref{prop-xk} (iv).
	By Proposition \ref{prop-xk} (iv), the set $\omega(x^0)$ is nonempty and compact, and $F\equiv \overline{F}$ on the set $\omega(x^0)$.
	Since $F$ is assumed to be a KL function, 
	by invoking \cite[Lemma 6]{Bolte14}, there exist $\varepsilon>0,\varpi>0$
	and $\varphi\in\Upsilon_{\!\varpi}$ such that for all
	$y\in[\overline{F}\!<F<\overline{F}+\varpi]\cap\mathfrak{B}(\omega(x^0),\varepsilon)$ 
	with $\mathfrak{B}(\omega(x^0),\varepsilon)
	\!:=\!\{y\in\mathbb{R}^n\ |\ {\rm dist}(y,\omega(x^0))\le\varepsilon\}$,
	\[
	\varphi'(F(y)-\overline{F}){\rm dist}(0,\partial F(y)) \geq 1.
	\]
	By Proposition \ref{prop-xk} (iii) and $d^k=y^k-x^k$, $\lim_{k\to\infty}\|y^k-x^k\|=0$. Together with $\lim_{k\to\infty}{\rm dist}(x^k,\omega(x^0))=0$, we have $\lim_{k\rightarrow\infty} {\rm dist}(y^k,\omega(x^0))=0$. Obviously, $\{y^k\}_{k\in\mathbb{N}}$ is bounded, which along with $\{y^k\}_{k\in\mathbb{N}}\subset{\rm dom}\,g$ and Assumption \ref{ass-1} (i)-(ii) implies that $\{F(y^k)\}_{k\in\mathbb{N}}$ is bounded. We claim that $\lim_{k\to\infty}F(y^k)=\overline{F}$. If not, by \eqref{relationPhi} there must exist an index set $\mathcal{K}\subset\mathbb{N}$ such that $\lim_{\mathcal{K}\ni k\to\infty}F(y^k)>\overline{F}$. Since $\{y^k\}_{k\in \mathcal{K}}$ is bounded, there exists an index set $\mathcal{K}_1\subset\mathcal{K}$ such that $\lim_{\mathcal{K}_1\ni k\to\infty}y^k=y^*$, which along with $\lim_{k\to\infty}\|d^k\|=0$ yields that $y^*\in\omega(x^0)$. Thus, from the continuity of $F$ relative to ${\rm dom}\,g$, $\lim_{\mathcal{K}\ni k\to\infty}F(y^k)=\lim_{\mathcal{K}_1\ni k\to\infty}F(y^k)=F(y^*)=\overline{F}$, a contradiction to $\lim_{\mathcal{K}\ni k\to\infty}F(y^k)>\overline{F}$. Thus, the claimed limit $\lim_{k\to\infty}F(y^k)=\overline{F}$ holds. Then, there exists $\overline{k}\in\mathbb{N}$ such that for all $k\ge\overline{k}$,
	$y^k\in\mathfrak{B}(\omega(x^0),\varepsilon)\cap[\overline{F}<F<\overline{F}+\varpi]$, and hence 
	$\varphi'\big(F(y^k)\!-\!\overline{F}\big){\rm dist}(0,\partial F(y^k))\geq 1$.
	By Lemma \ref{relgap}, for each $k\in\mathbb{N}$, there exists $w^k\in\partial F(y^{k})$ with $\|w^k\|\le\gamma_0\|d^k\|$. Consequently,
	for each $k\ge\widehat{k}:=\overline{k}+1$,
	\begin{equation}\label{ineq_KL}
		\varphi'\big(F(y^{k-1})-\overline{F}\big)\|w^{k-1}\|\ge 1.
	\end{equation}
	In addition, from the proof of Proposition \ref{prop-xk} (iii), 
	$\alpha_k\mu_k\ge a_2\min(1,\frac{(1-2\sigma)\beta a_2}{2L_{\nabla\!f}})\!:=\underline{\alpha}$ for each $k\in\mathbb{N}$, which along with \eqref{lsearch}-\eqref{new-iter} implies that for each $k\in\mathbb{N}$,
	\begin{equation}\label{descend-obj}
		F(x^k)-F(x^{k+1})\ge\sigma\underline{\alpha}\|d^k\|^2
		=\sigma\underline{\alpha}\|y^k-x^k\|^2.
	\end{equation}
	Fix any $k\ge\widehat{k}$.
	Since $\varphi'$ is nonincreasing on $(0,\varpi)$
	by the concavity of $\varphi$, combining \eqref{relationPhi} with \eqref{ineq_KL}
	and using $\|w^{k-1}\|\le\gamma_0\|d^{k-1}\|$ yields that
	\begin{equation}\label{equa1-later}
		\varphi'(F(x^k)\!-\!\overline{F})\ge\varphi'(F(y^{k-1})\!-\!\overline{F})
		\ge\frac{1}{\|w^{k-1}\|}\ge\frac{1}{\gamma_0\|y^{k-1}\!-\!x^{k-1}\|}.
	\end{equation}
	Together with the concavity of $\varphi$ and inequality \eqref{descend-obj}, it follows that
	\begin{align*}
		\Delta_{k,k+1}&:=\varphi(F(x^k)\!-\!\overline{F})-\varphi(F(x^{k+1})\!-\!\overline{F})
		\ge\varphi'(F(x^k)-\overline{F})(F(x^{k})\!-\!F(x^{k+1}))\\
		&\ge\frac{F(x^{k})\!-\!F(x^{k+1})}{\gamma_0\|y^{k-1}-x^{k-1}\|}
		\ge\frac{\sigma\underline{\alpha}\|y^k-x^k\|^2}{\gamma_0\|y^{k-1}-x^{k-1}\|}.
	\end{align*}
	Then, $\|y^k\!-\!x^k\|
	\le\sqrt{\frac{\gamma_0}{\sigma\underline{\alpha}}\Delta_{k,k+1}\|y^{k-1}\!-\!x^{k-1}\|}$.
	From $2\sqrt{ts}\le t+s$ for $t\ge 0,s\ge 0$, 
	\begin{equation*}
		2\|y^k\!-\!x^k\|
		\le \gamma_0(\sigma\underline{\alpha})^{-1}\Delta_{k,k+1}+\|y^{k-1}\!-\!x^{k-1}\|.
	\end{equation*}
	By summing this inequality from $k$ to $l>k$, it is immediate to obtain that
	\begin{align*}
		2{\textstyle\sum_{i=k}^l}\|y^i-x^i\|
		&\le{\textstyle\sum_{i=k}^l}\|y^{i-1}-x^{i-1}\|
		+\gamma_0(\sigma\underline{\alpha})^{-1}{\textstyle\sum_{i=k}^l}\Delta_{i,i+1}\\
		&\le{\textstyle\sum_{i=k}^l}\|y^i-x^i\|+\|y^{k-1}-x^{k-1}\|
		+\gamma_0(\sigma\underline{\alpha})^{-1}\varphi\big(F(x^{k})-\overline{F}\big),
	\end{align*}
	where the second inequality is using the nonnegativity of $\varphi(F(x^{l+1})-\overline{F})$.
	Thus,
	\begin{equation}\label{temp-ratek}
		{\textstyle\sum_{i=k}^l}\|y^i-x^i\|\le\|y^{k-1}-x^{k-1}\|
		+\gamma_0(\sigma\underline{\alpha})^{-1}\varphi(F(x^{k})-\overline{F}).
	\end{equation}
	Passing the limit $l\to\infty$ to this inequality yields that
	$\sum_{i=k}^{\infty}\|y^i-x^i\|<\infty$.
	Note that $\|x^{i+1}-x^i\|\le\|y^i-x^i\|$ for each $i\in\mathbb{N}$.
	Then, $\sum_{i=0}^{\infty}\|x^{i+1}-x^i\|<\infty$. 
\end{proof}

Next we deduce the linear and sublinear convergence rates of the sequence $\{x^k\}_{k\in\mathbb{N}}$ under the KL property of the function $F$ with exponent $1/(2q)$ for $q\in(1/2,1]$.  
\begin{theorem}\label{R-linear}
	If $F$ is a KL function of exponent ${1}/{(2q)}$ with $q\in({1}/{2},1]$, then under Assumption \ref{ass1} the sequence $\{x^k\}_{k\in\mathbb{N}}$ converges to a point $\overline{x}\in\mathcal{S}^*$ and there exist constants $\gamma\in(0,1)$ and $c_1>0$ such that for all sufficiently large $k$,
	\[
	\|x^k-\overline{x}\|\le\left\{\begin{array}{cl}
		c_1\gamma^{k} &{\rm for}\ q=1,\\
		c_1 k^{\frac{2q-1}{2(q-1)}}&{\rm for}\ q\in({1}/{2},1).
	\end{array}\right.
	\]
\end{theorem}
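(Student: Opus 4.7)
The plan is to build directly on the machinery already established in the proof of Theorem \ref{gconverge-KL}: the summability estimate \eqref{temp-ratek} and the KL inequality \eqref{equa1-later}. Since $F$ being a KL function of exponent $\frac{1}{2q}$ implies the hypothesis of Theorem \ref{gconverge-KL}, the iterate sequence $\{x^k\}$ is already known to converge to some $\overline{x}\in\mathcal{S}^*$; what remains is the quantitative rate.

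First I would specialize \eqref{equa1-later} to the desingularizing function $\varphi(t)=c\,t^{1-\theta}$ with $\theta=\frac{1}{2q}$. Computing $\varphi'(t)=c(1-\theta)t^{-\theta}$ and rearranging gives
\[
\bigl(F(x^k)-\overline{F}\bigr)^{\theta}\le c(1\!-\!\theta)\gamma_0\|y^{k-1}\!-\!x^{k-1}\|,
\]
hence
\[
\varphi\bigl(F(x^k)-\overline{F}\bigr)=c\bigl(F(x^k)-\overline{F}\bigr)^{1-\theta}
\le C_{1}\|y^{k-1}\!-\!x^{k-1}\|^{(1-\theta)/\theta}
=C_{1}\|y^{k-1}\!-\!x^{k-1}\|^{2q-1},
\]
for a constant $C_{1}>0$, since $(1-\theta)/\theta=2q-1$. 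Substituting into \eqref{temp-ratek} and letting $l\to\infty$, the tail sum $S_k:=\sum_{i\ge k}\|y^i-x^i\|$ satisfies
\[
S_k\le \|y^{k-1}\!-\!x^{k-1}\|+C_{2}\|y^{k-1}\!-\!x^{k-1}\|^{2q-1}
\]
for some $C_{2}>0$ and all sufficiently large $k$. Crucially, $\|y^{k-1}\!-\!x^{k-1}\|=S_{k-1}-S_k$, and since $\|x^{i+1}\!-\!x^i\|\le\|y^i\!-\!x^i\|$ by \eqref{new-iter}, one has the key comparison $\|x^k-\overline{x}\|\le S_k$.

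The case $q=1$ is then immediate: $2q-1=1$, so $S_k\le (1\!+\!C_{2})(S_{k-1}-S_k)$, which rearranges to $S_k\le\frac{1+C_{2}}{2+C_{2}}S_{k-1}$. Iterating produces $S_k\le\gamma^{k}S_0$ with $\gamma=\frac{1+C_{2}}{2+C_{2}}\in(0,1)$, and the bound $\|x^k-\overline{x}\|\le S_k$ yields the claimed $R$-linear rate. For $q\in(\tfrac12,1)$, one has $2q-1\in(0,1)$ and $\|y^{k-1}\!-\!x^{k-1}\|\to 0$ by Proposition \ref{prop-xk}(iii), so for all sufficiently large $k$ the second term dominates and
\[
S_k\le (1\!+\!C_{2})(S_{k-1}-S_k)^{2q-1},
\]
equivalently, $S_k^{1/(2q-1)}\le C_{3}\,(S_{k-1}-S_k)$.

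The main technical step will be to turn this recursion into the stated sublinear rate. I would invoke the standard Attouch--Bolte-type lemma (see \cite[Theorem 2]{Attouch10} or the proof of \cite[Theorem 5]{Bolte14}): if a nonnegative sequence $\{a_k\}$ satisfies $a_{k-1}-a_k\ge c\,a_k^{\alpha}$ with $\alpha>1$, then $a_k=O\bigl(k^{-1/(\alpha-1)}\bigr)$. Applied with $a_k=S_k$ and $\alpha=1/(2q-1)>1$, one has $\alpha-1=\frac{2-2q}{2q-1}$, so $-\frac{1}{\alpha-1}=\frac{2q-1}{2(q-1)}$, and therefore $S_k\le c_1k^{(2q-1)/(2(q-1))}$. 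Combined with $\|x^k-\overline{x}\|\le S_k$, this completes the case $q\in(\tfrac12,1)$. The only delicate points are verifying the monotonicity needed to handle the case where $F(x^k)=\overline{F}$ already (which forces $d^k=0$ and finite termination, as in the opening of the proof of Theorem \ref{gconverge-KL}) and checking that the absorption step $\|y^{k-1}\!-\!x^{k-1}\|\le\|y^{k-1}\!-\!x^{k-1}\|^{2q-1}$ is valid for $k$ large; both are routine consequences of Proposition \ref{prop-xk}(iii).
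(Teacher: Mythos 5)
Your proposal follows essentially the same route as the paper's proof: specialize the KL inequality \eqref{equa1-later} to the power desingularizer, feed it into \eqref{temp-ratek} to obtain the recursion $\Delta_k\le\gamma_1(\Delta_{k-1}-\Delta_k)^{2q-1}$ for the tail sums, treat $q=1$ by the resulting geometric contraction, and treat $q\in(\tfrac12,1)$ by the standard Attouch--Bolte recursion lemma. All of your exponent computations are correct and match the paper's.

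There is one step you gloss over that the paper handles explicitly. The inequality \eqref{equa1-later} is derived from the KL property of the potential function $\Phi$ defined in \eqref{potential}, not of $F$ itself: the $\varphi$ appearing there is a (uniformized) desingularizer for $\Phi$ on the set $\Omega$, obtained via \cite[Lemma 6]{Bolte14}. To take $\varphi(t)=ct^{\frac{2q-1}{2q}}$ in \eqref{equa1-later}, as you do, you must first verify that $\Phi$ inherits the KL exponent $\frac{1}{2q}$ from $F$ at every point of $\Omega$; the hypothesis on $F$ alone does not immediately give this, because the subgradient bound $|\!\|w^k|\!\|\le\gamma_0\|d^k\|$ of Lemma \ref{relgap} is for $\partial\Phi$, not $\partial F$. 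The paper fills this in by invoking the argument of \cite[Theorem 3.6]{LiPong18} (the exponent of $F$ transfers to the quadratically perturbed function $\Phi$ since the added term $\frac12\langle x-y,H(x-y)\rangle+\delta_{\mathbb{S}_+^n}(H)$ has exponent $\le\frac12\le\frac{1}{2q}$). Adding that one verification makes your argument complete; the remainder, including the absorption of $\|y^{k-1}-x^{k-1}\|$ into $\|y^{k-1}-x^{k-1}\|^{2q-1}$ for large $k$ and the application of the recursion lemma, is exactly what the paper does.
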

\begin{proof}
	For each $k\in\mathbb{N}$, write $\Delta_k\!:=\sum_{i=k}^{\infty}\|y^i-x^i\|$. Fix any $k\ge\widehat{k}$ where $\widehat{k}$ is the same as in the proof of Theorem \ref{gconverge-KL}. From inequality \eqref{temp-ratek}, it follows that 
	\[
	\Delta_k\le \|y^{k-1}\!-\!x^{k-1}\|
	+\gamma_0(\sigma\underline{\alpha})^{-1}\varphi(F(x^{k})-\overline{F})
	\]
	where $\varphi(t)=ct^{\frac{2q-1}{2q}}\ (t>0)$ for some $c>0$. From the expression of $\varphi$ and \eqref{equa1-later},	$(F(x^{k})\!-\!\overline{F})^{\frac{1}{2q}}\le c(1\!-\!0.5/q)\gamma_0\|y^{k-1}\!-\!x^{k-1}\|$. Together with the last inequality, 
	\begin{align*}
		\Delta_k
		&\le \|y^{k-1}\!-\!x^{k-1}\|+\frac{c\gamma_0}{\sigma\underline{\alpha}}  
		\Big[\frac{c(2q\!-\!1)\gamma_0}{2q}\Big]^{2q-1}\|y^{k-1}\!-\!x^{k-1}\|^{2q-1}\\
		&\le\|y^{k-1}\!-\!x^{k-1}\|^{2q-1}+\frac{(c\gamma_0)^{2q}}{\sigma\underline{\alpha}}
		\Big[\frac{(2q\!-\!1)}{2q}\Big]^{2q-1}\|y^{k-1}\!-\!x^{k-1}\|^{2q-1}\\
		&=\gamma_1(\Delta_{k-1}\!-\!\Delta_k)^{2q-1}\ \ {\rm with}\ \ 
		\gamma_1=\Big[1+\frac{(c\gamma_0)^{2q}}{\sigma\underline{\alpha}}
		\Big(\frac{(2q\!-\!1)}{2q}\Big)^{2q-1}\Big],
	\end{align*}
	where the second inequality is due to $\lim_{k\to\infty}\|d^k\|=0$.
	When $q=1$, $\Delta_k\le\frac{\gamma_1}{1+\gamma_1}\Delta_{k-1}$.
	From this recursion formula, it follows that
	$\Delta_k\le(\frac{\gamma_1}{1+\gamma_1})^{k-\widehat{k}}\Delta_{\widehat{k}}$.
	Note that $\|x^k-\overline{x}\|\le\sum_{i=k}^{\infty}\|x^{i+1}-x^i\|
	\le \sum_{i=k}^{\infty}\|y^i-x^i\|=\Delta_k$.
	The conclusion holds with $\gamma={\gamma_1}/{(1+\!\gamma_1)}$ and
	$c_1=\Delta_{\widehat{k}}(\frac{\gamma_1}{1+\gamma_1})^{-\widehat{k}}$.
	When $q\in(1/2,1)$, from the last inequality, we have
	$\Delta_k^{\frac{1}{2q-1}}\le(\gamma_1)^{\frac{1}{2q-1}}(\Delta_{k-1}\!-\!\Delta_k)$ 
	for all $k\ge\widehat{k}$.
	By using this inequality and following the same analysis technique as
	those in \cite[Page 14]{Attouch09}, we obtain
	$\Delta_{k}\le c_1k^{\frac{2q-1}{2(q-1)}}$ for some $c_1>0$.
	Consequently, $\|x^k-\overline{x}\|\le c_1k^{\frac{2q-1}{2(q-1)}}$ 
	for all $k\ge\widehat{k}$. 
\end{proof}

\begin{remark}\label{Rlinear-obj}
		The linear and sublinear convergence rates of the objective value sequence $\{F(x^k)\}_{k\in\mathbb{N}}$ can also be obtained under the assumption of Theorem \ref{R-linear}. Indeed, let $\Delta_k=F(x^k)-\overline{F}$ for each $k\in\mathbb{N}$. Fix any $k\ge\widehat{k}$ where $\widehat{k}$ is the same as in the proof of Theorem \ref{gconverge-KL}.
		From inequality \eqref{equa1-later} with $\varphi(t)=ct^{(2q-1)/(2q)}$ and \eqref{descend-obj}, it follows that 
		\[
		\Delta_k^{\frac{1}{q}}\le\frac{[c\gamma_0(2q-1)]^2}{4q^2}\|d^{k-1}\|^2\le\gamma_1(\Delta_{k-1}-\Delta_k)\ \ {\rm with}\ \ 
		\gamma_1=\frac{[c\gamma_0(2q-1)]^2}{4q^2\underline{\alpha}\sigma}.
		\]
		When $q=1$, $\Delta_k\le\frac{\gamma_1}{1+\gamma_1}\Delta_{k-1}$ for all $k\ge\widehat{k}$. This implies that $\{F(x^k)\}_{k\in\mathbb{N}}$ converges to $\overline{F}$ with Q-linear rate.
		When $q\in(1/2,1)$, we have $\Delta_k^{1/q}\le \gamma_1(\Delta_{k-1}-\Delta_k)$ for all $k\ge\widehat{k}$. By using this inequality and following the same analysis technique as
		those in \cite[Page 14]{Attouch09}, there exists $c_1>0$ such that $F(x^k)-\overline{F}\le c_1k^{-q/(1-q)}$ for all $k\ge\widehat{k}$.
\end{remark}
\subsection{Convergence analysis for $0<\varrho<1$}\label{sec4.2}
To analyze the global convergence of the sequence
$\{x^k\}_{k\in\mathbb{N}}$ and its superlinear rate in this scenario, we need several technical lemmas. First, by noting that $G_k(x^k\!-\!\overline{x}^k)\!-\!\nabla\!f(x^k)\in \partial g(\overline{x}^k)$ and $R_k(y^k)\!-\!\nabla\!f(x^k)-G_k(y^k\!-\!x^k)\in\partial g(y^k\!-\!R_k(y^k))$ for each $k\in\mathbb{N}$, using the monotonicity of $\partial g$ and $G_k\succeq\mu_kI$ can bound the error between $y^k$ and $\overline{x}^k$.  
\begin{lemma}\label{lemma-yk}
	For each $k\in\!\mathbb{N}$, it holds that
	\(
	\|y^k\!-\overline{x}^k\|\!\le a_2^{-1}\eta (1+\|G_k\|)[r(x^k)]^{1+\tau-\varrho}.
	\)
\end{lemma}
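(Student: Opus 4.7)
The plan is to follow the roadmap sketched in the sentence immediately preceding the lemma. First, I will record two subgradient inclusions for $\partial g$. Since $\overline{x}^k$ is the unique minimizer of the strongly convex subproblem \eqref{subprobx}, its first-order optimality condition reads $G_k(x^k-\overline{x}^k)-\nabla f(x^k)\in\partial g(\overline{x}^k)$. On the other hand, the definition of $R_k(y^k)$ in \eqref{rk-fun} means $y^k-R_k(y^k)=\mathcal{P}g(y^k-\nabla f(x^k)-G_k(y^k-x^k))$, and the characterization of the proximal mapping then gives $R_k(y^k)-\nabla f(x^k)-G_k(y^k-x^k)\in\partial g(y^k-R_k(y^k))$.

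Next, I will apply the monotonicity of $\partial g$ to these two inclusions at the pair of points $\overline{x}^k$ and $y^k-R_k(y^k)$. The $\nabla f(x^k)$ terms cancel, and collecting remaining terms yields an inequality of the form
\[
\langle G_k(y^k-\overline{x}^k),\,y^k-\overline{x}^k\rangle\;\le\;\langle (I+G_k)(y^k-\overline{x}^k),\,R_k(y^k)\rangle-\|R_k(y^k)\|^2.
\]
Dropping the nonpositive $-\|R_k(y^k)\|^2$ term, using the Cauchy--Schwarz inequality on the right, and invoking the lower bound $G_k\succeq\mu_k I$ with $\mu_k=a_2[r(x^k)]^{\varrho}$ on the left, I obtain $\mu_k\|y^k-\overline{x}^k\|\le (1+\|G_k\|)\,r_k(y^k)$.

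Finally, I will plug in the first part of the inexactness criterion \eqref{inexact-conda}, namely $r_k(y^k)\le \eta\min\{r(x^k),[r(x^k)]^{1+\tau}\}$. Since trivially $\min\{r(x^k),[r(x^k)]^{1+\tau}\}\le [r(x^k)]^{1+\tau}$ (no case split on whether $r(x^k)\lessgtr 1$ is needed), dividing through by $\mu_k=a_2[r(x^k)]^{\varrho}$ yields exactly the claimed bound $\|y^k-\overline{x}^k\|\le a_2^{-1}\eta(1+\|G_k\|)[r(x^k)]^{1+\tau-\varrho}$. I do not anticipate any real obstacle; the only step requiring care is the bookkeeping in the monotonicity step to correctly identify the coefficient $1+\|G_k\|$ on the right-hand side. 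The estimate is tight enough for the superlinear-rate analysis of Section~\ref{sec4.2}, where it will be combined with a local error bound to pass from subproblem accuracy in terms of $r(x^k)$ to distance estimates to the stationary point set.
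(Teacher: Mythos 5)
Your proposal is correct and follows exactly the route the paper itself sketches in the sentence preceding the lemma: the two subgradient inclusions for $\overline{x}^k$ and $y^k-R_k(y^k)$, monotonicity of $\partial g$, the lower bound $G_k\succeq\mu_kI$, and the inexactness criterion \eqref{inexact-conda} combined with $\mu_k=a_2[r(x^k)]^{\varrho}$. The intermediate inequality you derive and the final bookkeeping (including the observation that $\min\{r(x^k),[r(x^k)]^{1+\tau}\}\le[r(x^k)]^{1+\tau}$ without any case split) are all accurate.
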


The following lemma bounds the distance from $x^k$ to the exact minimizer 
$\overline{x}^k$ of \eqref{subprobx} by ${\rm dist}(x^k,\mathcal{S}^*)$,  
which extends the result of \cite[Lemma 4]{Yue19} to the nonconvex setting.
\begin{lemma}\label{xbark}
	Consider any $\overline{x}\in\omega(x^0)$. Suppose that Assumption \ref{ass1} holds, and that $\nabla^2\psi$ is strictly continuous at $A\overline{x}-b$ relative to $A({\rm dom}\,g)-b$. Then, there exist $\varepsilon_0>0$ and $L_{\psi}>0$ such that for all $x^k\in\mathbb{B}(\overline{x},{\varepsilon_0}/{2})$, $\|x^k\!-\!\overline{x}^k\|\le\big((0.5L_{\psi}/\mu_k)\|A\|^3{\rm dist}(x^k,\mathcal{S}^*)+(\Lambda_k/\mu_k)\|A\|^2+2\big){\rm dist}(x^k,\mathcal{S}^*)$, where $\Lambda_k\!:=a_1[-\lambda_{\rm min}(\nabla^2\psi(Ax^k\!-\!b))]_{+}$. 
\end{lemma}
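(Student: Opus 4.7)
The plan is to introduce $\widetilde{x}^k\!:=\Pi_{\mathcal{S}^*}(x^k)$, so that $\|x^k-\widetilde{x}^k\|={\rm dist}(x^k,\mathcal{S}^*)$, and split via the triangle inequality
\[
\|x^k-\overline{x}^k\|\le\|x^k-\widetilde{x}^k\|+\|\widetilde{x}^k-\overline{x}^k\|={\rm dist}(x^k,\mathcal{S}^*)+\|\widetilde{x}^k-\overline{x}^k\|.
\]
Everything then reduces to proving $\|\widetilde{x}^k-\overline{x}^k\|\le\bigl(\tfrac{L_\psi\|A\|^3}{2\mu_k}{\rm dist}(x^k,\mathcal{S}^*)+\tfrac{\Lambda_k\|A\|^2}{\mu_k}+1\bigr){\rm dist}(x^k,\mathcal{S}^*)$; the ``$+2$'' in the stated bound then arises by adding the trivial $1$ from the triangle inequality.

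For the key estimate I would combine two subgradient inclusions available at the two points of interest. Since $\widetilde{x}^k\in\mathcal{S}^*\subseteq{\rm dom}\,g$, we have $-\nabla\!f(\widetilde{x}^k)\in\partial g(\widetilde{x}^k)$; since $\overline{x}^k$ is the unique minimizer of the strongly convex $\Theta_k$, we have $-\nabla\!f(x^k)-G_k(\overline{x}^k\!-\!x^k)\in\partial g(\overline{x}^k)$. Setting $w\!:=\overline{x}^k-\widetilde{x}^k$ and $u\!:=\widetilde{x}^k-x^k$, so that $\overline{x}^k-x^k=u+w$, the monotonicity of $\partial g$ yields
\[
\bigl\langle \nabla\!f(\widetilde{x}^k)-\nabla\!f(x^k)-G_k(u+w),\,w\bigr\rangle\ge 0,
\]
which rearranges to $\langle G_kw,w\rangle\le\langle\nabla\!f(\widetilde{x}^k)-\nabla\!f(x^k),w\rangle-\langle G_ku,w\rangle$. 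Since $x^k,\widetilde{x}^k\in{\rm dom}\,g$ and ${\rm dom}\,g$ is convex, the segment between them lies in ${\rm dom}\,g\subseteq\mathcal{O}$, so Taylor's theorem gives $\nabla\!f(\widetilde{x}^k)-\nabla\!f(x^k)=\widetilde{H}u$ with $\widetilde{H}\!:=\int_0^1\!\nabla^2\!f(x^k+tu)\,dt$. Substituting and using $G_k\succeq\mu_kI$ with Cauchy--Schwarz produces
\[
\mu_k\|w\|\le\|\widetilde{H}-G_k\|\,{\rm dist}(x^k,\mathcal{S}^*).
\]

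To estimate $\|\widetilde{H}-G_k\|$, I would exploit $\nabla^2\!f(x)=A^{\!\top}\nabla^2\psi(Ax-b)A$ and the explicit form of $G_k$ to write
\[
\widetilde{H}-G_k=A^{\!\top}\!\Bigl[\int_0^1\!\bigl(\nabla^2\psi(A(x^k\!+\!tu)\!-\!b)-\nabla^2\psi(Ax^k\!-\!b)\bigr)dt\Bigr]A-\Lambda_kA^{\!\top}\!A-\mu_kI,
\]
and then pick $\varepsilon_0>0$ small enough so that for any $x^k\in\mathbb{B}(\overline{x},\varepsilon_0/2)$ one has $\|\widetilde{x}^k-\overline{x}\|\le 2\|x^k-\overline{x}\|\le\varepsilon_0$ and moreover $A(x^k\!+\!tu)-b$ for all $t\in[0,1]$ lies in the neighborhood of $A\overline{x}-b$ on which $\nabla^2\psi$ is $L_\psi$-Lipschitz relative to $A({\rm dom}\,g)-b$. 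Then the integrand is bounded by $L_\psi t\|A\|\,\|u\|$, and using $\int_0^1t\,dt=1/2$ yields $\|\widetilde{H}-G_k\|\le\tfrac{1}{2}L_\psi\|A\|^3\|u\|+\Lambda_k\|A\|^2+\mu_k$. Plugging this back into the $\mu_k\|w\|$ bound and applying the initial triangle inequality completes the argument.

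The main obstacle I foresee is only the coordinated choice of $\varepsilon_0$: it must simultaneously constrain $\widetilde{x}^k$ to $\mathbb{B}(\overline{x},\varepsilon_0)$ and keep the full segment $t\mapsto A(x^k\!+\!tu)-b$ inside the $L_\psi$-Lipschitz neighborhood of $A\overline{x}-b$ relative to $A({\rm dom}\,g)-b$. Since $\overline{x}\in\omega(x^0)\subseteq\mathcal{S}^*$ by Proposition~\ref{prop-xk}(iv) and the latter neighborhood has some positive radius, this is pure bookkeeping; the substance of the proof is the monotonicity-plus-Taylor manipulation above.
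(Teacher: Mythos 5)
Your proposal is correct and follows essentially the same route as the paper's proof: project onto $\mathcal{S}^*$, use the two subgradient inclusions at $\widetilde{x}^k$ and $\overline{x}^k$ with the monotonicity of $\partial g$ and $G_k\succeq\mu_kI$, then bound $\|\widetilde{H}-G_k\|$ via the integral form of the mean value theorem and the local Lipschitz continuity of $\nabla^2\psi$, with $\varepsilon_0=\delta_0/\|A\|$ handling the bookkeeping you flag at the end. The only differences are notational (the paper writes the Hessian discrepancy as $\int_0^1[G_k-\nabla^2f(x^k+t(x^{k,*}-x^k))]\,dt$ acting on $x^{k,*}-x^k$ rather than factoring out $\widetilde{H}$), so there is nothing to add.
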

\begin{proof}
	Since $\nabla^2\psi$ is strictly continuous at $A\overline{x}-b$ relative to
	$A({\rm dom}\,g)-b$, there exist $\delta_0>0$ and $L_{\psi}>0$ such that
	for any $z,z'\in\mathbb{B}(A\overline{x}-b,\delta_0)\cap[A({\rm dom}\,g)-b]$, 
	\begin{equation}\label{Hessian-Lip1}
		\|\nabla^2\psi(z)\!-\!\nabla^2\psi(z')\|\le L_{\psi}\|z-z'\|.
	\end{equation}  
	Take $\varepsilon_0={\delta_0}/{\|A\|}$. For any  $x,x'\in\mathbb{B}(\overline{x},\varepsilon_0)\cap {\rm dom}\,g$,
	we have $Ax\!-\!b,Ax'\!-\!b\in\mathbb{B}(A\overline{x}\!-\!b,\delta_0)\cap[A({\rm dom}\,g)\!-\!b]$, which together with $\nabla^2\!f(\cdot)=A^{\top}\nabla^2\psi(A\cdot\!-b)A$ implies that 
	\begin{equation}\label{Hessian-Lip}
		\|\nabla^2\!f(x)\!-\!\nabla^2\!f(x')\|\le L_{\psi}\|A\|^3\|x-x'\|.
	\end{equation}  
	Fix any $x^k\in\mathbb{B}(\overline{x},{\varepsilon_0}/{2})$.
	Pick any $x^{k,*}\!\in\Pi_{\mathcal{S}^*}(x^k)$. Since $\overline{x}\in\mathcal{S}^*$ 
	by Proposition \ref{prop-xk} (iv), we have 
	$\|x^{k,*}-\overline{x}\|\le 2\|x^k-\overline{x}\|\le\varepsilon_0$, so  
	$\|(1\!-\!t)x^k+\!t x^{k,*}\!-\overline{x}\|\le\varepsilon_0$ for all $t\in[0,1]$. Clearly, $(1-\!t)x^k\!+\!t x^{k,*}\in{\rm dom}\,g$ for all $t\in[0,1]$ by the convexity of ${\rm dom}\,g$. Thus, $(1-\!t)x^k\!+\!t x^{k,*}\in\mathbb{B}(\overline{x},\varepsilon_0)\cap{\rm dom}\,g$ for all $t\in[0,1]$. Note that $0\in\nabla\!f(x^{k,*})+\partial g(x^{k,*})$ and
	$0\in\nabla\!f(x^k)+G_k(\overline{x}^k\!-\!x^k)+\partial g(\overline{x}^k)$.
	From the monotonicity of $\partial g$, it follows that
	\[
	0\le\langle-\nabla\!f(x^{k,*})+\nabla\!f(x^k)+G_k(x^{k,*}\!-\!x^k),
	x^{k,*}\!-\!\overline{x}^k\rangle
	+\langle G_k(\overline{x}^k\!-\!x^{k,*}),x^{k,*}\!-\!\overline{x}^k\rangle.
	\]
	Together with $G_k\succeq \mu_kI$ and the triangle inequality, we obtain that
	\begin{align*}
		\|\overline{x}^k-x^{k,*}\|
		&\le \mu_k^{-1}\|\nabla\!f(x^k)-\nabla\!f(x^{k,*})+G_k(x^{k,*}\!-\!x^k)\|\nonumber\\
		&=\frac{1}{\mu_k}\Big\|\int_{0}^1\big[G_k\!-\!\nabla^2\!f(x^{k}\!+\!t(x^{k,*}\!-\!x^k))\big](x^{k,*}\!-\!x^k)dt\Big\|\nonumber\\
		&\le\mu_k^{-1}\big[(L_{\psi}\|A\|^3/2)\|x^{k,*}\!-\!x^k\|^2
		+\Lambda_k\|A\|^2\|x^{k,*}\!-\!x^k\|\big]+\|x^{k,*}\!-\!x^k\|,
	\end{align*}
	where the last inequality is using \eqref{Hessian-Lip} with $x=x^k$ and $x'=x^{k}\!+\!t(x^{k,*}\!-\!x^k)$. The conclusion then follows by using
	$\|\overline{x}^k\!-\!x^k\|\le\|\overline{x}^k\!-\!x^{k,*}\|+\|x^{k,*}\!-\!x^k\|$. 
\end{proof}  

The following lemma bounds $\Lambda_k$ defined in Lemma \ref{xbark} in terms of ${\rm dist}(x^k,\mathcal{X}^*)$. This result appeared early in \cite[Lemma 5.2]{Ueda10}, and we here provide a concise proof.
\begin{lemma}\label{lemma-Lamk}
	Consider any $\overline{x}\in\mathcal{X}^*$. Suppose that Assumption \ref{ass1} holds, and that $\nabla^2\psi$ is strictly continuous at $A\overline{x}-b$ relative to $A({\rm dom}\,g)-b$. Then,
	\[
	\Lambda_k\!\le\!a_1L_{\psi}\|A\|{\rm dist}(x^k,\mathcal{X}^*)\quad {\rm for\ all}\ 
	x^k\!\in\mathbb{B}(\overline{x},{\varepsilon_0}/{2})
	\]
	where $\varepsilon_0,\,L_{\psi}$ and $\Lambda_k$ are the same as those appearing in Lemma \ref{xbark}.
\end{lemma}
\begin{proof}
	Fix any $x^k\in\mathbb{B}(\overline{x},{\varepsilon_0}/{2})$. By the expression of $\Lambda_k$, it suffices to consider that $\lambda_{\rm min}(\nabla^2\psi(Ax^k\!-\!b))<0$. Pick any $x^{k,*}\in\Pi_{\mathcal{X}^*}(x^k)$. From $\overline{x}\in\mathcal{X}^*$, we have $\|x^{k,*}-\overline{x}\|\le 2\|x^k-\overline{x}\|\le\varepsilon_0$, and consequently $x^{k,*}\in\mathbb{B}(\overline{x},\varepsilon_0)\cap{\rm dom}\,g$. From $x^{k,*}\in\mathcal{X}^*$, we have $\nabla^2\psi(Ax^{k,*}\!-\!b)\succeq 0$. When $\lambda_{\rm min}(\nabla^2\psi(Ax^{k,*}\!-\!b))=0$, it holds that
	\begin{align*}
		\Lambda_k&=-a_1\lambda_{\rm min}(\nabla^2\psi(Ax^k\!-\!b))
		=a_1[\lambda_{\rm min}(\nabla^2\psi(Ax^{k,*}\!-\!b))-\lambda_{\rm min}(\nabla^2\psi(Ax^k\!-\!b))]\\
		&\le a_1\|\nabla^2\psi(Ax^{k,*}\!-\!b)\!-\!\nabla^2\psi(Ax^k\!-\!b)\|
		\le a_1L_{\psi}\|A\|\|x^k\!-\!x^{k,*}\|,
	\end{align*}
	where the first inequality is using the Lipschitz continuity of the function $\mathbb{S}^n\ni Z\mapsto\lambda_{\rm min}(Z)$ with modulus $1$, and the second one 
	is using \eqref{Hessian-Lip1} with $Ax^{k,*}-b,Ax^k-b\in\mathbb{B}(A\overline{x}-b,\delta_0)\cap[A({\rm dom}\,g)-b]$. So we only need to consider that $\lambda_{\rm min}(\nabla^2\psi(Ax^{k,*}\!-\!b))>0$. For this purpose, let $\phi_{k}(t):=\lambda_{\rm min}[\nabla^2\psi(Ax^k\!-\!b+\!tA(x^{k,*}\!-\!x^k))]$ for $t\ge 0$. Clearly, $\phi_{k}$ is continuous on any open interval containing $[0,1]$. Note that $\phi_k(0)<0$ and $\phi_k(1)>0$. There exists $\overline{t}_k\in(0,1)$ such that $\phi_k(\overline{t}_k)=0$. Consequently, 
	\begin{align*}
		\Lambda_k&=a_1\big[\lambda_{\rm min}(\nabla^2\psi(Ax^k\!-\!b+\overline{t}_kA(x^{k,*}\!-\!x^k)))
		-\lambda_{\rm min}(\nabla^2\psi(Ax^k\!-\!b))\big]\\
		&\le a_1\|\nabla^2\psi(Ax^k\!-\!b+\!\overline{t}_kA(x^{k,*}\!-\!x^k))\!-\!\nabla^2\psi(Ax^k\!-\!b)\|
		\le a_1L_{\psi}\|A\|\|x^{k,*}\!-\!x^k\|.
	\end{align*}
	This shows that the desired result holds. The proof is completed. 
\end{proof}
\begin{remark}\label{remark-Lamk}
	{\bf(a)} When $\mathcal{X}^*$ in Lemma \ref{lemma-Lamk} is replaced by $\mathcal{S}^*$, the conclusion may not hold. For example, consider problem \eqref{prob} with $g\equiv 0$ and $f$ given by Remark \ref{remark-qsubregular} (a), and $\overline{x}=(0,0)^{\top}\!\in\mathcal{S}^*$. For each $k>1$, let $x^k=(0,\frac{1}{k})^{\top}$. We have $\Lambda_k=a_1(1\!-\!\frac{3}{k^2})$ but ${\rm dist}(x^k,\mathcal{S}^*)\!=\frac{1}{k}$. Clearly, for all $k$ large enough, $\Lambda_k\le a_1L{\rm dist}(x^k,\mathcal{S}^*)$ does not hold. 
	
	\medskip	
	\noindent
	{\bf(b)} The result of Lemma \ref{lemma-Lamk} may not hold if ${\rm dist}(x^k,\mathcal{X}^*)$ is replaced by ${\rm dist}(x^k,\mathcal{S}^*)$. Indeed, consider $f(x)=-(x-2)^{4}$ and $g(x)=\left\{\begin{array}{cl}
		\infty & {\rm if}\ x<0,\\
		\!(x-2)^{4} & {\rm if}\ x\ge 0
	\end{array}\right.$ for $x\in\mathbb{R}$. 
	We have  $\mathcal{S}^*\!=\!\mathbb{R}_{+}$ and then $\mathcal{X}^*\!=\!\{2\}$. Let $\overline{x}=2$ and $x^k\!=2\!-\frac{1}{k}$. Then, $\Lambda_k\!=\frac{12a_1}{k^2}$, but ${\rm dist}(x^k,\mathcal{S}^*)\!=0$. Clearly, $\Lambda_k\le a_1L{\rm dist}(x^k,\mathcal{S}^*)$ does not hold for each $k\in\mathbb{N}$.  
\end{remark}  

Next we show that the unit step-size must occur when the iterates are close enough to a cluster point and the following locally H\"{o}lderian error bound on $\mathcal{X}^*$ holds. 
	\begin{assumption}\label{ass4}
		The locally H\"{o}lderian error bound of order $q>0$ at any $\overline{x}\in\omega(x^0)$ on $\mathcal{X}^*$ holds, i.e., for any $\overline{x}\in\omega(x^0)$, there exist $\varepsilon>0$ and $\kappa>0$ such that for all $x\in\mathbb{B}(\overline{x},\varepsilon)\cap{\rm dom}\,g$, ${\rm dist}(x,\mathcal{X}^*)\le\kappa[r(x)]^q$.
	\end{assumption}
	\begin{lemma}\label{step-size}
		Fix any $\overline{x}\in\omega(x^0)$. Suppose that Assumption \ref{ass1} holds, that $\nabla^2\psi$ is strictly continuous at $A\overline{x}\!-\!b$ relative to $A({\rm dom}\,g)\!-\!b$, and that  Assumption \ref{ass4} holds with $q>\varrho$. Then, there exists $\overline{k}\in\mathbb{N}$ such that $\alpha_k=1$ for all $x^k\in\mathbb{B}(\overline{x},\varepsilon_1)$ with $k\ge\overline{k}$ and 	$\varepsilon_1=\min(\varepsilon,{\varepsilon_0}/{2})$, where $\varepsilon$ and $\varepsilon_0$ are the same as in Assumption \ref{ass4} and Lemma \ref{xbark}.
\end{lemma}
\begin{proof}
	Since $\overline{x}\in\omega(x^0)$, there is $\mathcal{K}\subset\mathbb{N}$ such that $\lim_{\mathcal{K}\ni k\to\infty}x^k=\overline{x}$, which by Assumption \ref{ass4} and  $\{x^k\}_{k\in\mathbb{N}}\!\subset{\rm dom}\,g$ means that ${\rm dist}(x^k,\mathcal{X}^*)\le\kappa[r(x^k)]^q$ for all $k\in\mathcal{K}$ large enough, so $\overline{x}\in\mathcal{X}^*$ follows by passing $\mathcal{K}\ni k\to\infty$ and using Proposition \ref{prop-xk} (iii). Recall that $d^k=y^k-x^k$ for each $k\in\mathbb{N}$. By invoking Lemmas \ref{lemma-yk}-\ref{lemma-Lamk}, for all $x^k\in\mathbb{B}(\overline{x},\varepsilon_1)$,
	\begin{align*}
		\|d^k\|&\le\|y^k-\overline{x}^k\|+\|x^k-\overline{x}^k\|
		\le\frac{\eta(1\!+\!\|G_k\|)}{a_2}[r(x^k)]^{1+\tau-\varrho} \\
		&\quad\!+\!\big[0.5L_{\psi}\|A\|^3\mu_k^{-1}{\rm dist}(x^k,\mathcal{S}^*)
		\!+\!a_1L_{\psi}\|A\|^2\mu_k^{-1}{\rm dist}(x^k,\mathcal{X}^*)\!+\!2\big]{\rm dist}(x^k,\mathcal{S}^*).
	\end{align*} 
	By Proposition \ref{prop-xk} (iv), for each $k\in\mathbb{N}$, ${\rm dist}(x^k,\mathcal{S}^*)\le{\rm dist}(x^k,\omega(x^0))$, which along with $\lim_{k\to\infty}{\rm dist}(x^k,\omega(x^0))=0$ implies that for each $k\in\mathbb{N}$, $\Pi_{\mathcal{S}^*}(x^k)\subset \mathcal{L}_F(x^0)+\widehat{\tau}\mathbb{B}$ (if necessary by increasing $\widehat{\tau}$.) Thus, for each $k\in\mathbb{N}$, with any $x^{k,*}\in\Pi_{\mathcal{S}^*}(x^k)$, we have
	\begin{equation}\label{rk-ineq1}
		r(x^k)\!=\|x^k\!-\!\mathcal{P}_g(x^k\!-\!\nabla\!f(x^k))\!-\!x^{k,*}
		+\mathcal{P}_g(x^{k,*}\!-\!\nabla\!f(x^{k,*}))\|\!\le(2+L_{\nabla\!f}){\rm dist}(x^k,\mathcal{S}^*).
	\end{equation}
	From $\mathcal{X}^*\subset\mathcal{S}^*$ and Assumption \ref{ass4}, for all $x^k\in\mathbb{B}(\overline{x},\varepsilon_1)$, it holds that 
	\begin{equation}\label{muk-inverse}
		\frac{1}{\mu_k}=\frac{1}{a_2}r(x^k)^{-\varrho}\le\frac{\kappa^{\frac{\varrho}{q}}}{a_2}{\rm dist}(x^k,\mathcal{X}^*)^{-\frac{\varrho}{q}}\le\frac{\kappa^{\frac{\varrho}{q}}}{a_2}{\rm dist}(x^k,\mathcal{S}^*)^{-\frac{\varrho}{q}}.
	\end{equation}
	In addition, from \eqref{rk-ineq1} and Assumption \ref{ass4}, it follows that for all $x^k\in\mathbb{B}(\overline{x},\varepsilon_1)$,
	\[
	\frac{{\rm dist}(x^k,\mathcal{X}^*)}{\mu_k}
	\le\frac{\kappa}{a_2}r(x^k)^{q-\varrho}
	\le\frac{\kappa(2\!+L_{\nabla\!f})^{q-\varrho}}{a_2}{\rm dist}(x^k,\mathcal{S}^*)^{q-\varrho}.
	\]
	From the last four inequalities, we deduce that 
	for all $x^k\in\mathbb{B}(\overline{x},\varepsilon_1)$,
	\begin{align}\label{dk-ineq0}
		\|d^k\| &\le\bigg[\frac{\eta(1\!+\!\|G_k\|)(2\!+\!L_{\nabla\!f})^{1+\tau-\varrho}}{a_2}
		[{\rm dist}(x^k,\mathcal{S}^*)]^{\tau-\varrho}
		+\frac{L_{\psi}\|A\|^3\kappa^{\frac{\varrho}{q}}}{2a_2}[{\rm dist}(x^k,\mathcal{S}^*)]^{1-\frac{\varrho}{q}} \nonumber\\
		&\qquad +\frac{\kappa a_1L_{\psi}\|A\|^2(2\!+\!L_{\nabla\!f})^{q-\varrho}}{a_2}
		[{\rm dist}(x^k,\mathcal{S}^*)]^{q-\varrho}+2\bigg]{\rm dist}(x^k,\mathcal{S}^*).
	\end{align}
	From \eqref{muk-inverse}-\eqref{dk-ineq0}, the boundedness of $\{G_k\}$, and 
	$\lim_{k\to\infty}{\rm dist}(x^k,\mathcal{S}^*)=0$ by Proposition \ref{prop-xk} (iv), we conclude that
	$[(1-2\sigma)\mu_k]^{-1}\|d^k\|\le O({\rm dist}(x^k,S^*)^{1-\frac{\varrho}{q}})$.
	Along with $q>\varrho$,  there exists $\overline{k}\in\mathbb{N}$ 
	such that for all $k\ge\overline{k}$, when $x^k\in\mathbb{B}(\overline{x},\varepsilon_1)$, 
	$\frac{\|d^k\|}{(1-2\sigma)\mu_k}\le\frac{3}{L_{\psi}\|A\|^3}$ and
	$x^k+t d^k\in\mathbb{B}(\overline{x},\varepsilon_0)$ for all $t\in[0,1]$. 
	Fix any integer $m\ge 0$. By using \eqref{Hessian-Lip}, 
	\begin{align*}
		&f(x^k\!+\!\beta^md^k)\!-\!f(x^k)
		=\int_{0}^{1}\beta^m\langle[\nabla\!f(x^k\!+\!t\beta^md^k)-\nabla\!f(x^k)],d^k\rangle dt	+\beta^m\langle\nabla\!f(x^k),d^k\rangle\\
		&=\int_{0}^{1}\int_{0}^{1}t\beta^{2m}\langle d^k,[\nabla^2\!f(x^k\!+\!st\beta^md^k)
		\!-\!\nabla^2\!f(x^k)]d^k\rangle dsdt\\
		&\quad+\int_{0}^{1}t\beta^{2m}\langle d^k,\nabla^2\!f(x^k)d^k\rangle dt
		+\beta^m\langle\nabla\!f(x^k),d^k\rangle\\
		&\le\frac{L_{\psi}\|A\|^3}{6}\beta^{3m}\|d^k\|^3
		+\frac{1}{2}\beta^{2m}\langle d^k,\nabla^2\!f(x^k)d^k\rangle
		+\beta^m\langle\nabla\!f(x^k),d^k\rangle\\
		&\le\frac{L_{\psi}\|A\|^3}{6}\beta^{3m}\|d^k\|^3
		+\frac{1}{2}\beta^{m}\langle d^k,[\nabla^2\!f(x^k)\!+\!\Lambda_kA^{\top}A]d^k\rangle
		+\beta^m\langle\nabla\!f(x^k),d^k\rangle
	\end{align*}
	where the last inequality is due to $\Lambda_k\ge 0$ and $\nabla^2\!f(x^k)\!+\!\Lambda_kA^{\top}A\succeq 0$.
	In addition, from the convexity of the function $g$ and $\beta^m\in(0,1]$, it follows that
	\begin{align*}
		g(x^k\!+\!\beta^m d^k)-g(x^k)
		&\le\beta^m[g(y^k)-g(x^k)]=\beta^m[\ell_k(y^k)-\ell_k(x^k)-\langle\nabla\!f(x^k),d^k\rangle]\\
		&\le-\beta^m[\langle\nabla\!f(x^k),d^k\rangle+\frac{1}{2}\langle d^k,G_kd^k\rangle],
	\end{align*}
	where the last inequality is by \eqref{ineq-lk}. Adding the last two inequalities together leads to
	\begin{align}\label{Fineq}
		F(x^k\!+\!\beta^md^k)-F(x^k)+\sigma\mu_k\beta^{m}\|d^k\|^2
		&\le\frac{L_{\psi}\|A\|^3}{6}\beta^{3m}\|d^k\|^3-(1/2-\sigma)\mu_k\beta^m\|d^k\|^2\nonumber\\
		&\le-\frac{1}{2}\beta^{m}\|d^k\|^3\Big(\frac{(1\!-\!2\sigma)\mu_k}{\|d^k\|}-\frac{L_{\psi}\|A\|^3}{3}\Big).
	\end{align}
	Recall that for all $k\ge\overline{k}$, if $x^k\in\mathbb{B}(\overline{x},\varepsilon_1)$, 
	$\frac{(1-2\sigma)\mu_k}{\|d^k\|}\ge\frac{L_{\psi}\|A\|^3}{3}$. This means that \eqref{Fineq} holds with $m=0$ if $k\ge\overline{k}$ and $x^k\in\mathbb{B}(\overline{x},\varepsilon_1)$. 
	The desired result then follows. 
\end{proof}

From the proof of Lemma \ref{step-size}, Assumption \ref{ass4} implies that $\overline{x}\in\mathcal{X}^*$. It is worth pointing out that the result of Lemma \ref{step-size} also holds without the error bound condition if the parameter $\varrho$ of Algorithm \ref{IRPNM} is restricted in $(0,{1}/{2})$; see the {\color{blue}arxiv} version of this paper.

Now we are ready to establish the global convergence of $\{x^k\}_{k\in\mathbb{N}}$ and its superlinear rate in the following theorem, whose proof is inspired by that of \cite[Theorem 5.1]{Mordu20}.
\begin{theorem}\label{lconverge}
	Consider any $\overline{x}\in\omega(x^0)$. Suppose that Assumption \ref{ass1} holds, that $\nabla^2\psi$ is strictly continuous at $A\overline{x}\!-\!b$ relative to $A({\rm dom}\,g)\!-\!b$, and that Assumption \ref{ass4} holds with $q\in(\max\{\varrho, 1/(1\!+\!\varrho)\},1]$. Then, the sequence $\{x^k\}_{k\in\mathbb{N}}$ converges to $\overline{x}\in\mathcal{X}^*$ with the $Q$-superlinear rate of order $q(1\!+\!\varrho)$.
\end{theorem}
\begin{proof}
	Let $\overline{k}\in\mathbb{N},\varepsilon_0$ and $\varepsilon_1$ be the same as in Lemma \ref{step-size}. 
	Since $\lim_{k\to\infty}r(x^k)=0$ by Proposition \ref{prop-xk} (iii), $r(x^k)\le 1$ for all $k\ge\overline{k}$ (if necessary by increasing $\overline{k}$). Since $\lim_{k\to\infty}{\rm dist}(x^k,\mathcal{S}^*)=0$ 
	by Proposition \ref{prop-xk} (iv), from \eqref{dk-ineq0} and $\tau\!\ge\varrho$, there exists $c_1\!>0$ such that for all $k\ge\overline{k}$ (if necessary by increasing $\overline{k}$),
	\begin{equation}\label{dk-ineq1}
		\|d^k\|\le c_1{\rm dist}(x^k,\mathcal{S}^*).
	\end{equation}
	Also, from Lemma \ref{step-size}, $\alpha_k=1$ for all $x^k\in\mathbb{B}(\overline{x},\varepsilon_1)$ with $k\ge\overline{k}$.
	We next argue that for all $k\ge\overline{k}$, whenever $x^k\in\!\mathbb{B}(\overline{x},\varepsilon_1)$ and $x^{k+1}\!=x^k+d^k=y^k\in\mathbb{B}(\overline{x},\varepsilon_1)$, 
	\begin{equation}\label{recursion}
		{\rm dist}(x^{k+1},\mathcal{S}^*)=o({\rm dist}(x^{k},\mathcal{S}^*)).
	\end{equation}  
	Indeed, from ${\rm dist}(x^{k+1},\mathcal{X}^*)\!\le\!\kappa[r(x^{k+1})]^q$ and $r_k(y^k)\!\le\!\eta[r(x^k)]^{1+\tau}$ by \eqref{inexact-conda} and $r(x^k)\le 1$, it follows that
	\begin{align}\label{rk-ineq1aa}
		{\rm dist}(x^{k+1},\mathcal{S}^*)&\le{\rm dist}(x^{k+1},\mathcal{X}^*)
		\le\kappa\big[r(x^{k+1})-r_k(x^{k+1})+r_k(y^{k})\big]^q\nonumber\\
		&\le\kappa\big[r(x^{k+1})-r_k(x^{k+1})+\eta[r(x^{k})]^{1+\tau}\big]^q\nonumber\\
		&\le\kappa\big[|r(x^{k+1})-r_k(x^{k+1})|
		+\eta(2\!+\!L_{\nabla\!f})^{1+\tau}[{\rm dist}(x^k,\mathcal{S}^*)]^{1+\tau}\big]^q
	\end{align}
	where the last inequality is using \eqref{rk-ineq1}. Note that $(1-t)x^k+tx^{k+1}\in\mathbb{B}(\overline{x},\varepsilon_1)$ for all $t\in[0,1]$. Using  the expressions of $r$ and $r_k$ and inequality \eqref{dk-ineq1} yields that
	\begin{align*}
		&|r(x^{k+1})-r_k(x^{k+1})|\le\|\nabla\!f(x^{k+1})-\nabla\!f(x^k)-G_k(x^{k+1}-x^k)\|\\
		&\le\Big\|\int_{0}^{1}\!\big[\nabla^2\!f(x^k\!+\!t(x^{k+1}\!-\!x^k))\!-\!\nabla^2\!f(x^k)\big]
		(x^{k+1}\!-\!x^k)dt\Big\|+(\Lambda_k\|A\|^2\!+\!\mu_k)\|x^{k+1}\!-\!x^k\|\\
		&\le 0.5L_{\psi}\|A\|^3\|x^{k+1}-x^k\|^2+(\Lambda_k\|A\|^2\!+\!\mu_k)\|x^{k+1}-x^k\|\\
		&=0.5L_{\psi}\|A\|^3\|d^k\|^2+(\Lambda_k\|A\|^2\!+\!\mu_k)\|d^k\| \\
		&\le\big[0.5L_{\psi}\|A\|^3c_1^2{\rm dist}(x^k,\mathcal{S}^*)+c_1(\Lambda_k\|A\|^2\!+\!\mu_k)\big]{\rm dist}(x^k,\mathcal{S}^*).
	\end{align*}
	By combining this inequality with \eqref{rk-ineq1aa} and using Lemma \ref{lemma-Lamk}, Assumption \ref{ass4} and \eqref{rk-ineq1}, 
	\begin{align}\label{dist-recursion}
		{\rm dist}(x^{k+1},\mathcal{S}^*)
		&\le\kappa\Big[0.5L_{\psi}\|A\|^3c_1^2({\rm dist}(x^k,\mathcal{S}^*))^2
		+c_1a_1L\kappa(2\!+\!L_{\nabla\!f})^q[{\rm dist}(x^k,\mathcal{S}^*)]^{1+q}\nonumber\\
		&\quad+c_1a_2(2\!+\!L_{\nabla\!f})^{\varrho}[{\rm dist}(x^k,\mathcal{S}^*)]^{1+\varrho}
		+\eta(2\!+\!L_{\nabla\!f})^{1+\tau}[{\rm dist}(x^k,\mathcal{S}^*)]^{1+\tau}\Big]^{q}.
	\end{align}
	Since $q+q\varrho>1$ and $\lim_{k\to\infty}{\rm dist}(x^k,\mathcal{S}^*)=0$, using this inequality yields the stated relation in \eqref{recursion} (if necessary by increasing $\overline{k}$). Then, 
	for any $\widetilde{\sigma}\in(0,1)$, there exists $0<\varepsilon_2<\varepsilon_1$ such that for all $k\ge\overline{k}$, if $x^k\!\in\mathbb{B}(\overline{x},\varepsilon_{2})$ and
	$x^{k+1}\!=x^k+d^k=y^k\in\mathbb{B}(\overline{x},\varepsilon_{2})$,
	\begin{equation}\label{recursion1}
		{\rm dist}(y^k,\mathcal{S}^*)\le\widetilde{\sigma}{\rm dist}(x^k,\mathcal{S}^*).
	\end{equation}
	
	Let $\overline{\varepsilon}\!:=\!\min\big\{\frac{\varepsilon_2}{2},\frac{\varepsilon_2}{2c_1},
	\frac{(1-\widetilde{\sigma})\varepsilon_2}{2c_1}\big\}$. We argue by induction that if some iterate $x^{k_0}\in\mathbb{B}(\overline{x},\overline{\varepsilon})$ 
	with $k_0\ge\overline{k}$, then $\alpha_k=1$ and $x^{k+1}=y^k\in\mathbb{B}(\overline{x},\varepsilon_2)$
	for all $k\ge k_0$. Indeed, as $\overline{x}\in\omega(x^0)$, we can find $k_0>\overline{k}$ 
	such that $x^{k_0}\in\mathbb{B}(\overline{x},\overline{\varepsilon})$. Using \eqref{dk-ineq1} yields that
	\[
	\|y^{k_0}\!-\!\overline{x}\|
	\le\|x^{k_0}\!-\!\overline{x}\|+\|y^{k_0}\!-\!x^{k_0}\|=\|x^{k_0}\!-\!\overline{x}\|+\|d^{k_0}\|
	\le\|x^{k_0}\!-\!\overline{x}\|+c_1\|x^{k_0}\!-\!\overline{x}\|\le\varepsilon_2.
	\]
	Since $\overline{\varepsilon}<\varepsilon_1$, we have $\alpha_{k_0}=1$, and $x^{{k_0}+1}=y^{k_0}\in\mathbb{B}(\overline{x},\varepsilon_2)$.
	Fix any $k>k_0$. Assume that for all $k_0\le l\le k\!-\!1, \alpha_l=1$ and $x^{l+1}=y^{l}\in\mathbb{B}(\overline{x},\varepsilon_2)$. By \eqref{dk-ineq1} and \eqref{recursion1}, 
	\begin{align*}
		\|y^{k}\!-\!x^{k_0}\|\!\le\!\sum_{l=k_0}^{k}\|d^l\|
		\!\le c_1\!\sum_{l=k_0}^{k}{\rm dist}(x^l,\mathcal{S}^*)
		\!\le c_1\!\sum_{l=k_0}^{k}\widetilde{\sigma}^{l-k_0}{\rm dist}(x^{k_0},\mathcal{S}^*)
		\le\!\frac{c_1}{1\!-\!\widetilde{\sigma}}\|x^{k_0}\!-\!\overline{x}\|.
	\end{align*}
	Then $\|y^k-\overline{x}\|\le\|y^k-x^{k_0}\|+\|x^{k_0}-\overline{x}\|\le\varepsilon_2$. Note that $\alpha_k=1$ since $x^{k}\in\mathbb{B}(\overline{x},\varepsilon_2)$. Hence, 
	$x^{k+1}\!=x^k+d^k=y^k\!\in\mathbb{B}(\overline{x},\varepsilon_2)$. By induction the stated result holds.
	
	Since $\lim_{k\to\infty}{\rm dist}(x^k,\mathcal{S}^*)=0$, for any $\epsilon>0$ there exists
	$\mathbb{N}\ni\overline{k}_0\ge k_0$ such that for all $k\ge\overline{k}_0$, ${\rm dist}(x^{k},\mathcal{S}^*)\le\epsilon$.	Fix any $k_1\ge k_2\ge\overline{k}_0$. By invoking \eqref{dk-ineq1} and \eqref{recursion1},
	\begin{align}\label{xk-ineq2}
		\|x^{k_1}\!-\!x^{k_2}\|&\le\sum_{j=k_2}^{k_1-1}\|x^{j+1}-x^j\|
		= \sum_{j=k_2}^{k_1-1}\|d^j\|
		\le c_1\sum_{j=k_2}^{k_1-1}{\rm dist}(x^j,\mathcal{S}^*)\nonumber\\
		&\le c_1\sum_{j=k_2}^{k_1-1}\widetilde{\sigma}^{j-k_2}{\rm dist}(x^{k_2},\mathcal{S}^*)
		=\frac{c_1}{1-\widetilde{\sigma}}{\rm dist}(x^{k_2},\mathcal{S}^*)
		\le\frac{c_1\epsilon}{1-\widetilde{\sigma}},
	\end{align}
	where the first equality is using $\alpha_k=1$ for all $k\ge k_0$. This shows that $\{x^k\}_{k\in\mathbb{N}}$ is a Cauchy sequence and thus converges to $\overline{x}\in\mathcal{X}^*$.
	By passing the limit $k_1\to\infty$ to \eqref{xk-ineq2} and using \eqref{dist-recursion}, 
	we conclude that for any $k>\overline{k}_0$,
	\[
	\|x^{k+1}-\overline{x}\|
	\le\frac{c_1}{1-\widetilde{\sigma}}{\rm dist}(x^{k+1},\mathcal{S}^*)
	\le O([{\rm dist}(x^{k},\mathcal{S}^*)]^{q(1+\varrho)})
	\le O(\|x^k-\overline{x}\|^{q(1+\varrho)}).
	\]
	That is, $\{x^k\}_{k\in\mathbb{N}}$ converges to $\overline{x}$ with
	the $Q$-superlinear rate of order $q(1\!+\!\varrho)$. 
\end{proof}

The local error bound at $\overline{x}\in\omega(x^0)$ on $\mathcal{X}^*$ in Assumption \ref{ass4} is stronger than the $q$-subregularity of $\partial F$ at $\overline{x}$ for $0$ by Lemma \ref{sregular-relation}, but it does not require the isolatedness of $\overline{x}$. When $\overline{x}\in\mathcal{X}^*$, by Lemma \ref{alemma-ebound}, it is implied by the KL property of $F$ at $\overline{x}$ with exponent $1/2$ along with the quadratic growth of $F$ at $\overline{x}$ on $\mathcal{X}^*$, which is further implied by the local strong convexity of $F$ in a neighborhood of $\overline{x}$ by Remark \ref{remark-qsubregular} (c). 

The locally H\"{o}lderian error bound at any $\overline{x}\in\omega(x^0)$ on $\mathcal{X}^*$ in Assumption \ref{ass4} implicitly requires that $\mathcal{X}^*\!\ne\emptyset$. When $\mathcal{X}^*=\emptyset$, we can achieve the global convergence of $\{x^k\}$ and its superlinear rate under a local error bound on $\mathcal{S}^*$ as follows. Its proof is similar to that of Theorem \ref{lconverge}, see Appendix \ref{asec2}.
\begin{theorem}\label{converge}
	Fix any $\overline{x}\in\omega(x^0)$ and any $q\!>1\!+\!\varrho$. Suppose that Assumption \ref{ass1} holds, that $\nabla^2\psi$ is strictly continuous at $A\overline{x}\!-\!b$ relative to $A({\rm dom}\,g)\!-\!b$, and that there exist $\varepsilon>0$ and $\kappa>0$ such that ${\rm dist}(x,\mathcal{S}^*)\le \kappa[r(x)]^q$ for all $x\in\mathbb{B}(\overline{x},\varepsilon)$. Then, the sequence $\{x^k\}_{k\in\mathbb{N}}$ converges to $\overline{x}$ with the $Q$-superlinear rate of order ${(q\!-\!\varrho)^2}/{q}$ for $q>{(2\varrho\!+1+\!\sqrt{4\varrho\!+1})}/{2}$.
\end{theorem}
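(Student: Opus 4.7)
My approach is to mirror the proof of Theorem \ref{lconverge} while accommodating the two changes in the hypotheses: the local error bound now sits on the larger set $\mathcal{S}^*$ rather than on $\mathcal{X}^*$, and the exponent is in the new range $q>1+\varrho$. The chief structural consequence is that $\Lambda_k=a_1[-\lambda_{\rm min}(\nabla^2\psi(Ax^k-b))]_{+}$ is now only known to be uniformly bounded, since Lemma \ref{lemma-Lamk} is unavailable without an error bound on $\mathcal{X}^*$. Accordingly the term $\frac{\Lambda_k\|A\|^2}{\mu_k}D_k$ appearing in Lemma \ref{xbark} will become the dominant contribution and will shift the exponents throughout the proof.

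I would first derive the analogue of the key bound \eqref{dk-ineq0}. Setting $D_k:={\rm dist}(x^k,\mathcal{S}^*)$, the error bound implies $r(x^k)\ge(D_k/\kappa)^{1/q}$ and hence $\frac{1}{\mu_k}\le\frac{\kappa^{\varrho/q}}{a_2}D_k^{-\varrho/q}$. Feeding this into Lemma \ref{xbark} (with $\Lambda_k$ merely bounded) yields $\|\overline{x}^k-x^k\|\le O(D_k^{1-\varrho/q})$, where the $\Lambda_k$-term is the decisive one; combining with Lemma \ref{lemma-yk} and the natural bound $r(x^k)\le(2+L_{\!f})D_k$ produces $\|d^k\|\le O(D_k^{1-\varrho/q})$. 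Since $q>\varrho$, the ratio $\|d^k\|/[(1-2\sigma)\mu_k]$ tends to zero, so the descent inequality \eqref{Fineq} from the proof of Lemma \ref{step-size} forces $\alpha_k=1$ for all $x^k$ sufficiently close to $\overline{x}$.

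Next I would assemble the recursion. Under the unit step $x^{k+1}=y^k$, exactly the argument that produced \eqref{dist-recursion} gives $r(x^{k+1})\le\eta[r(x^k)]^{1+\tau}+\frac{L_{\psi}\|A\|^3}{2}\|d^k\|^2+(\Lambda_k\|A\|^2+\mu_k)\|d^k\|$, in which the non-vanishing term $\Lambda_k\|A\|^2\|d^k\|=O(D_k^{1-\varrho/q})$ dominates; this yields $r(x^{k+1})\le O(D_k^{1-\varrho/q})$ and then, via the error bound, the contraction $D_{k+1}\le\kappa[r(x^{k+1})]^q\le O(D_k^{q-\varrho})$. Because $q-\varrho>1$ this is super-linear, so an induction structurally identical to that in Theorem \ref{lconverge} confines the tail of $\{x^k\}$ to an arbitrarily small neighborhood of $\overline{x}$ and a Cauchy argument delivers $x^k\to\overline{x}$.

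Finally, to convert the $D_k$-rate $q-\varrho$ into the stated rate in $\|x^k-\overline{x}\|$, I would use $\|x^{k+1}-\overline{x}\|\le\sum_{j\ge k+1}\|d^j\|$. The super-linear decay of $D_j$ (so $D_{k+m}\le O(D_k^{(q-\varrho)^m})$) together with $\|d^j\|\le O(D_j^{1-\varrho/q})$ makes the first summand dominant, yielding $\|x^{k+1}-\overline{x}\|\le O(D_{k+1}^{1-\varrho/q})\le O(D_k^{(q-\varrho)(1-\varrho/q)})=O(D_k^{(q-\varrho)^2/q})$; combined with $D_k\le\|x^k-\overline{x}\|$ this gives $\|x^{k+1}-\overline{x}\|\le O(\|x^k-\overline{x}\|^{(q-\varrho)^2/q})$, and the exponent exceeds one exactly when $q^2-(2\varrho+1)q+\varrho^2>0$, i.e.\ $q>\frac{2\varrho+1+\sqrt{4\varrho+1}}{2}$. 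The main obstacle is the bookkeeping that pinpoints the correct dominant term in each of $\|\overline{x}^k-x^k\|$, $r(x^{k+1})$, and the telescoping tail: because $\Lambda_k$ no longer vanishes, the dominant exponent in $\|d^k\|$ shifts from the naive $q-\varrho$ to the smaller $1-\varrho/q$, and it is precisely this shift that contracts the apparent ``$D_k$-rate'' $q-\varrho$ down to the $x^k$-rate $(q-\varrho)^2/q$ in the telescoping step.
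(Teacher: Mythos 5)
Your proposal matches the paper's own proof (Appendix B) essentially step for step: you identify the same dominant $\Lambda_k$-term to get $\|d^k\|\le O\big([{\rm dist}(x^k,\mathcal{S}^*)]^{1-\varrho/q}\big)$, derive the same recursion ${\rm dist}(x^{k+1},\mathcal{S}^*)\le O\big([{\rm dist}(x^k,\mathcal{S}^*)]^{q-\varrho}\big)$, and perform the same telescoping to reach the rate $(q-\varrho)^2/q$ with the identical threshold computation. The only minor inaccuracy is in the unit-step argument, where $\|d^k\|/\mu_k\to 0$ requires $q>2\varrho$ (as in the paper's Lemma \ref{step-sizeS}) rather than $q>\varrho$ as you state, but this is harmless because $q>1+\varrho$ together with $\varrho<1$ already forces $q>2\varrho$.
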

\begin{remark}\label{remark-converge} 
	When $H_k$ and $\eta_k$ in step (S.1) of \cite[Algorithm 3.1]{Kanzow21} respectively take $G_k$ and $\eta\min\big\{1,[r(x^k)]^{\tau}\big\}$, under the local error bound assumption of Theorem \ref{lconverge} or \ref{converge}, the unit step-size always occurs and inequality \eqref{dk-rk} holds with $\widehat{\kappa}=c_1\kappa$ and $\widehat{q}=q$. By Remark \ref{remark-IRPNM} (d), we can achieve the global convergence of the iterate sequence generated by GIPN and its superlinear rate under the same assumption as that of Theorem \ref{lconverge} or \ref{converge}, which greatly improve the convergence results in \cite{Kanzow21} where the uniformly bounded positive definiteness of $G_k$ and the local strong convexity of $F$ are required.
\end{remark}
	To close this section, we emphasize that the convergence results obtained in this section are also applicable to Algorithm \ref{IRPNM} with $G_k$ constructed by
	\begin{equation}\label{def-newG}
		G_k=\nabla^2f(x^k)+a_1A^{\top}{\rm Diag}(\max\{-\lambda^k,0\})A+\mu_kI,
	\end{equation}
	where $\lambda^k\in\mathbb{R}^m$ is the eigenvalue vector of  $\nabla^2\psi(Ax^k\!-\!b)$, and ${\rm Diag}(\max\{-\lambda^k,0\})$ is the diagonal matrix with the components of $\max\{0,-\lambda^k\}$ as the diagonal entries. Indeed, one can check that $G_k\succeq\mu_kI$ and ${\rm Diag}(\max\{-\lambda^k,0\})\preceq[-\lambda_{\rm min}(\nabla^2\psi(Ax^k\!-\!b))]_{+}I$. For $\varrho=0$, the uniformly bounded positive definiteness of $G_k$ guarantees that the sufficient descent property of objective values and the relative error condition in Lemma \ref{relgap} continue to hold. Thus, the global convergence follows from Theorem \ref{gconverge-KL}. For $\varrho\in(0,1)$, the order relation ${\rm Diag}(\max\{-\lambda^k,0\})\preceq[-\lambda_{\rm min}(\nabla^2\psi(Ax^k\!-\!b))]_{+}I$ along with Lemma \ref{lemma-Lamk} implies that $\|{\rm Diag}(\max\{-\lambda^k,0\})\|$ can be bounded by ${\rm dist}(x^k,\mathcal{X}^*)$. Thus, the global convergence and superlinear convergence rate results in Theorems \ref{lconverge}-\ref{converge} continue to hold.
\section{Numerical experiments}\label{sec5}
Before testing the performance of Algorithm \ref{IRPNM}, we first focus on its implementation (see \url{https://github.com/SCUT-OptGroup/IRPNM} for the corresponding code).
\subsection{Implementation of Algorithm \ref{IRPNM}}\label{sec5.1}
The core in the implementation of Algorithm \ref{IRPNM} is to find an approximate minimizer $y^k$ of subproblem \eqref{subprobx} satisfying \eqref{inexact-conda} or \eqref{inexact-condb}. By the expression of $G_k$ in \eqref{def-G}, we have $G_k=\!A_k^{\top}A_k+\mu_kI$ with $A_k\!=\big(D_k+a_1(-\lambda_{\rm min}(D_k))_{+}I\big)^{1/2}A$ 
for $D_k=\nabla^2\psi(Ax^k\!-\!b)$. When the function $\psi$ is separable, it is very cheap to achieve such a reformulation of $G_k$ as $D_k$ is a diagonal matrix. Let $b_k\!:=G_kx^k\!-\!\nabla\!f(x^k)$ and $g_k (\cdot):=g(\cdot)+({\mu_k}/{2})\|\cdot\|^2$. Then, subproblem \eqref{subprobx} can be equivalently written as 
\begin{equation}\label{EprobQ}
	\min_{y\in\mathbb{R}^n,z\in\mathbb{R}^m}
	\Big\{\frac{1}{2}\|z\|^2-b_k^{\top}y+g_{k}(y)
	\ \ {\rm s.t.}\ \ A_ky-z=0\Big\},
\end{equation}
which is a Lasso problem when $g$ takes the $\ell_1$-norm function.
Inspired by the encouraging results reported in \cite{LiSunToh18}, we develop 
an efficient SNALM for solving subproblem \eqref{EprobQ}. Note that a SNALM was also developed in \cite[Section 4.2]{Liu22} to solve the subproblems of an inexact SQA method for composite DC problems.
\subsubsection{SNALM for solving subproblems}\label{sec5.1.1}

Let $g_k^*$ be the conjugate function of $g_k$. The dual problem of \eqref{EprobQ} takes the form of 
\begin{equation}\label{DprobQ}
	\min_{\xi\in\mathbb{R}^m,\zeta\in\mathbb{R}^n}\Big\{\frac{1}{2}\|\xi\|^2+g_k^*(\zeta)
	\ \ {\rm s.t.}\ A_k^{\top}\xi+\zeta-b_k=0\Big\}.
\end{equation}
The basic iterate steps of the augmented Lagrangian method for \eqref{DprobQ} are as follows:
\begin{subnumcases}{}
	\label{DALM-subprob}
	(\xi^{j+1},\zeta^{j+1})
	:=\mathop{\arg\min}_{\xi\in\mathbb{R}^{m},\zeta\in\mathbb{R}^{n}}L_{\sigma_{\!j}}(\xi,\zeta;x^j),\\
	\label{DALM-multiplier} 
	x^{j+1}:=x^j+\sigma_{\!j}(A_k^{\top}\xi^{j+1}+\zeta^{j+1}-b_k),\\
	\sigma_{j+1}\uparrow\sigma_\infty\leq\infty\qquad\qquad   
\end{subnumcases}
where $L_{\sigma}(\cdot,\cdot;x)$ is the augmented Lagrangian function of \eqref{DprobQ} associated to the penalty factor $\sigma>0$ and the Lagrange multiplier $x$.
For the ALM, the primary computation cost is to solve \eqref{DALM-subprob}.
After calculating, $\zeta^{j+1}
=\mathcal{P}_{\!\sigma_{\!j}^{-1}g_k^*}(b_k\!-\!A_k^{\top}\xi^{j+1}-\sigma_{\!j}^{-1}x^j)$ with
\begin{equation}\label{Phij}
	\xi^{j+1}=\mathop{\arg\min}_{\xi\in\mathbb{R}^m}
	\Phi_j(\xi):=(1/2)\|\xi\|^2+e_{\sigma_{\!j}^{-1}g_k^*}\big(b_k-A_k^{\top}\xi-\sigma_{\!j}^{-1}x^j\big).
\end{equation}
By the strong convexity of $\Phi_j$, $\xi^{j+1}$ is a solution of \eqref{Phij} if and only if it is the unique root of system $0=\nabla\Phi_j(\xi)$. From $\mathcal{P}_{\sigma_j^{-1}g_k^*}\!=(I+\sigma_j^{-1}\partial g_k^*)^{-1}$ and \cite{Ioffe09}, when $g$ is definable in an o-minimal structure over the real field, the mapping $\mathcal{P}_{\sigma_j^{-1}g_k^*}$ is definable in this o-minimal structure. Thus, by the expression of $\nabla\Phi_j$ and \cite[Theorem 1]{Bolte09}, we conclude that $\nabla\Phi_j$ is semismooth if $g$ is definable in an o-minimal structure over the real field. Moreover, when $g$ is a piecewise linear-quadratic convex function, by \cite[Theorem 11.14 \& Proposition 12.30]{RW98} the mapping $\mathcal{P}_{\sigma_j^{-1}g_k^*}$ is piecewise linear, which by \cite[Proposition 7.4.7 \& 7.4.4]{Facchinei03} implies that $\nabla\Phi_j$ is strongly semismooth.
The Clarke Jacobian \cite{Clarke83} of $\nabla\Phi_j$ is always nonsingular due to its strong monotonicity, and one can achieve the exact characterization of its Clarke Jacobian for some specific $g$. Hence, we apply the semismooth Newton method to seeking a root of system $\nabla\Phi_j(\xi)=0$. For more details on semismooth Newton methods, see \cite{Qi93} or \cite[Chapter 7]{Facchinei03}.  

By combining the optimality conditions of \eqref{DALM-subprob}
with equation \eqref{DALM-multiplier}, we deduce that 
$\zeta^{j+1}\in\partial g_{k}(-x^{j+1})$, which implies that  $\omega^{j+1}\!:=G_ky^k\!-\!b_k+\zeta^{j+1}\in\partial\Theta_k(y^k)$ if taking 
$y^k=-x^{j+1}$. Inspired by this and criterion \eqref{inexact-conda} or \eqref{inexact-condb}, 
we terminate the ALM at iterate $(\xi^{j+1},\zeta^{j+1},x^{j+1})$ 
when $\Theta_k(-x^{j+1})\le\Theta_k(x^k)$ 
and $r_k(-x^{j+1})\le\epsilon_{\rm ALM}\!:=\!\eta\min\{r(x^k),[r(x^k)]^{1+\tau}\}$ 
for $\varrho\in(0,1)$, or when $\Theta_k(-x^{j+1})\le\Theta_k(x^k)$ and $\|\omega^{j+1}\|\le\epsilon_{\rm ALM}:=\eta r(x^k)$ for $\varrho=0$. In the implementation of the SNALM, we adjust the penalty factor $\sigma_j$ in terms of the primal and dual infeasibility. 

\subsubsection{Choice of parameters for Algorithm \ref{IRPNM}}\label{sec5.1.2}

For the subsequent tests, we choose $a_1=1$, which means that our $G_k$ may not be uniformly positive definite if $\varrho>0$. The other parameters of Algorithm \ref{IRPNM} are chosen as 
$a_2\!=\min\{\sigma,\frac{10^{-2}}{\max\{1,r(x^0)\}}\}$ with
$\sigma=10^{-4}$, $\beta=0.1,\eta=0.9,\tau=\varrho$. From Figure \ref{fig0a} below, a larger $\varrho$ corresponds to a better convergence rate, but preliminiary tests indicate that Algorithm \ref{IRPNM} with a larger $\varrho$ does not necessarily require less running time and Algorithm \ref{IRPNM} with $\varrho\in[0.4,0.5]$ usually works well in terms of running time. Inspired by this, we choose Algorithm \ref{IRPNM} with $\varrho=0.45$ for the subsequent testing. 

Figure \ref{fig0} below plots the convergence rate curves of the iterate sequences yielded by Algorithm \ref{IRPNM} with different $\varrho$ for the example in Section \ref{sec5.2} with $d=80${\bf dB} in Figure \ref{fig0a} and the example in Section \ref{sec5.4} with $\lambda=10^{-2}$ in Figure \ref{fig0b}. We see that every curve in Figure \ref{fig0a} shows a superlinear convergence rate, but no curve in Figure \ref{fig0b} does this. After checking, we find that the sequences $\{x^k\}$ in Figure \ref{fig0a} all converge to a second-order stationary point, while those in Figure \ref{fig0b} converge to a non-strong one. This coincides with the theoretical results obtained in Section \ref{sec4}.  
\begin{figure}[h]
	\centering
	\subfigure[\label{fig0a}]{\includegraphics[scale=0.45]{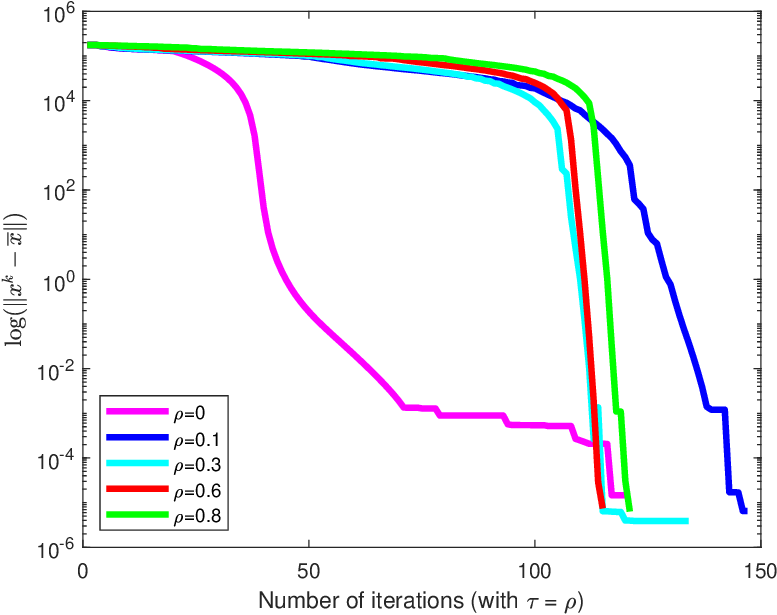}}
	\subfigure[\label{fig0b}]{\includegraphics[scale=0.45]{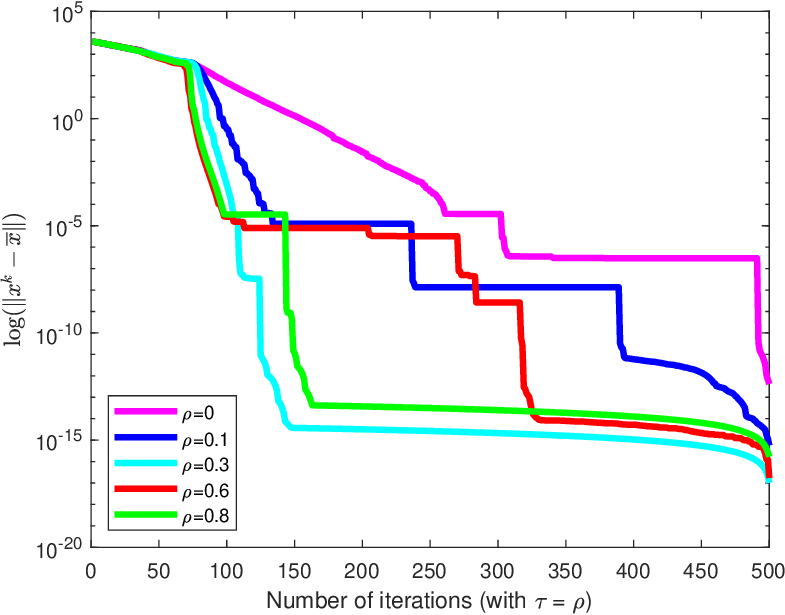}}
	\setlength{\abovecaptionskip}{10pt}
	\caption{Influence of $\varrho$ on the convergence behavior of the iterate sequences of Algorithm \ref{IRPNM}}
	\label{fig0}
\end{figure}

In the rest of this section, we compare the performance of Algorithm \ref{IRPNM} armed with the SNALM (called IRPNM) with that of GIPN and ZeroFPR on $\ell_1$-regularized Student's $t$-regressions, group penalized Student's $t$-regressions, and nonconvex image restoration. Among others, GIPN was proposed in \cite{Kanzow21} by solving subproblem \eqref{subprobx} with FISTA \cite{Beck09} (named GIPN-F) or the KKT system $R_k(x)=0$ with the semismooth Newton method \cite{Milzarek14} (named GIPN-S), and ZeroFPR was proposed in \cite{Themelis18} by seeking the root of the KKT system for minimizing the FBE of $F$ along a quasi-Newton direction. For GIPN, we choose $a_1=1.001$ to ensure that the approximate matrix $G_k$ is uniformly bounded positive definite as required by its convergence results. For GIPN and ZeroFPR, we adopt the default setting except that their maximum number of iterations are set as $2000$ and $20000$, respectively. For IRPNM, the maximum numbers of iterations of Algorithm \ref{IRPNM} and SNALM are set as $1000$ and $100$, respectively. For fairness, three solvers use the same starting point $x^{{\rm init}}$ and the same stopping condition $r(x^{k})\le \epsilon_0$. All tests are performed on a desktop running 64-bit Windows System with an Intel(R) Core(TM) i9-10850K CPU 3.60GHz and 32.0 GB RAM on Matlab 2020b.

\subsection{$\ell_{1}$-regularized Student's $t$-regression}\label{sec5.2}
This class of problems takes the form of \eqref{prob} with 
$\psi(u)\!:=\sum_{i=1}^m\log(1+{u_i^2}/{\nu})\ (\nu>0)$ for $u\in\mathbb{R}^m$
and $g(x)\!:=\lambda\|x\|_1$ for $x\in\mathbb{R}^n$, where $\lambda>0$ is the regularized parameter. Such a function was introduced in \cite{Aravkin12} to deal with the data contaminated by heavy-tailed Student-$t$ errors. The test examples are randomly generated in the same way as in \cite{Becker11,Milzarek14}. Specifically, we generate a true sparse signal $x^{\rm true}$ of length $n=512^2$ with $s=\lfloor\frac{n}{40}\rfloor$ nonzero entries, 
whose indices are chosen randomly; and then compute each nonzero component via $x^{\rm true}_i=\eta_1(i)10^{d\eta_2(i)/20}$, where $\eta_1(i)\in\{\pm1\}$ is a random sign and $\eta_2(i)$ is uniformly distributed in $[0,1]$. The signal has dynamic range of $d${\bf dB} 
with $d\in\{20,40,60,80\}$. The matrix $A\in\mathbb{R}^{m\times n}$ 
takes $m={n}/{8}$ random cosine measurements, i.e., $Ax = (\verb"dct"(x))_J$, 
where $\verb"dct"$ is the discrete cosine transform and 
$J\subset\{1,\ldots,n\}$ with $|J|=m$ is an index set chosen randomly.
The vector $b$ is obtained by adding Student's $t$-noise with
degree of freedom $4$ and rescaled by $0.1$ to $Ax^{\rm true}$. 

For each $\lambda=c_{\lambda}\|\nabla\!f(0)\|_{\infty}$ with $c_{\lambda}=0.1$ and $0.01$, we run the three solvers with $\nu=0.25,x^{{\rm init}}\!=A^{\top}b$ and $\epsilon_0=10^{-5}$ for $10$ independent trials, i.e., ten groups of data $(A,b)$ generated randomly. Table \ref{table0} lists the average objective values, KKT residuals and running times over $10$ independent trials. We see that IRPNM yields the same objective values as ZeroFPR does, but requires much less running time, while GIPN-S can not achieve the desired accuracy within $2000$ iterations for most of test instances. The three solvers all require less running time for $c_{\lambda}=0.1$ than $c_{\lambda}=0.01$ due to more sparsity of stationary points. After checking, we find that IRPNM yields a second-order stationary point for all test instances except those for $c_{\lambda}=0.1,d=20$, and  the smallest eigenvalue of Hessian at these second-order stationary points is numerically close to $0$, so it is highly possible for the stationary point to be nonisolated.
\begin{table}[h]
	\centering
	\setlength{\tabcolsep}{1.2pt}
	\setlength{\belowcaptionskip}{2pt}
	\caption{Numerical comparisons on $\ell_1$-regularized Student's $t$-regressions}
	\label{table0}
	\begin{tabular}{c|c|ccc|ccc|ccc}
		\hline
		& &\multicolumn{3}{c|}{IRPNM} & \multicolumn{3}{c|}{ZeroFPR} & \multicolumn{3}{c}{GIPN-S} \\
		\hline
		$c_{\lambda}$ & $d$ & Fval & r(x) & time(s) & Fval & r(x) & time(s) & Fval & r(x) & time(s) \\
		\hline
		\multirow{4}{*}{0.1} & {20}  
		& 9532.54 & 8.80e-6 & 9.1 
		& 9532.54 & 9.15e-6 & 9.2 
		& 9532.54 & 7.80e-6 & 12.2  \\ 
		\multirow{4}{*}{} & {40}  
		& 23812.87 & 8.40e-6 & 24.0 
		& 23812.88 & 9.26e-6 & 27.3 
		& 23812.88 & 5.18e-6 & 142.4  \\ 
		\multirow{4}{*}{} & {60}  
		& 54228.01 & 8.09e-6 & 53.3 
		& 54228.01 & 9.48e-6 & 95.8 
		& 54228.01 & 2.48e-3 & 1413.8  \\ 
		\multirow{4}{*}{} & {80} 
		& 134779.26 & 8.35e-6 & 229.9 
		& 134779.26 & 9.51e-6 & 327.4 
		& 222306.40 & 4.22 & 1116.3   \\ \hline
		\multirow{4}{*}{0.01} & {20}   
		& 1020.43 & 8.30e-6 & 24.6 
		& 1020.43 & 9.56e-6 & 63.5 
		& 1020.43 & 3.74e-3 & 1459.5 \\ 
		\multirow{4}{*}{} & {40}
		& 2395.07 & 7.24e-6 & 79.1 
		& 2395.07 & 9.63e-6 & 201.6 
		& 2395.10 & 1.17e-2 & 1400.6 \\ 
		\multirow{4}{*}{} & {60}
		& 5424.40 & 8.21e-6 & 172.0 
		& 5424.40 & 9.62e-6 & 388.0 
		& 5424.41 & 9.74e-4 & 1206.3 \\ 
		\multirow{4}{*}{} & {80}
		& 13478.10 & 6.92e-6 & 768.7 
		& 13478.10 & 9.73e-6 & 920.4 
		& 37588.03 & 1.41 & 1019.4\\ \hline
	\end{tabular}
\end{table}
\subsection{Group penalized Student's $t$-regression}\label{sec5.3}
This class of problems takes the form of \eqref{prob} with $\psi$ being the same as in Section \ref{sec5.2} and $g(x)\!:=\lambda\sum_{i=1}^l\|x_{J_i}\|$ for $x\in\mathbb{R}^n$, where the index sets $J_1,\ldots,J_{l}$ satisfy 
$J_i\cap J_j=\emptyset$ for any $i\ne j$ and $\bigcup_{i=1}^lJ_i=\{1,\ldots,n\}$. We generate a true group sparse signal $x^{\rm true}\in\mathbb{R}^n$ of length $n=512^2$ with $s$ nonzero groups, whose indices are chosen randomly, and compute each nonzero entry of $x^{\rm true}$ by the same formula as in Section \ref{sec5.2}. The matrix $A\in\mathbb{R}^{m\times n}$ is generated in the same way as in Section \ref{sec5.2}, and $b\in\mathbb{R}^m$ is obtained by adding Student's $t$-noise with degree of freedom $5$ and rescaled by $0.1$ to $Ax^{\rm true}$. 

We run the three solvers with $\lambda\!=\!0.1\|\nabla\!f(0)\|,\nu\!=0.2,x^{{\rm init}}\!=A^{\top}b$ and $\epsilon_0=10^{-5}$ for $10$ independent trials, i.e., ten groups of data $(A,b)$ generated randomly. Table \ref{table1} lists the average objective values, KKT residuals and running times over $10$ independent trials for corresponding $d=60, 80$ {\bf dB} and nonzero group $s=16, 64, 128$. We see that IRPNM yields the same objective values as ZeroFPR does, but requires much less running time than the latter does; while GIPN-F can not achieve the desired accuracy within $2000$ iterations for all test instances. Also, after checking, IRPNM yields a second-order stationary point for each test instance. 

\begin{table}[h]
	\centering
	\setlength{\tabcolsep}{2pt}
	\setlength{\belowcaptionskip}{2pt}
	\caption{Numerical comparisons on group penalized Student's $t$-regressions}
	\label{table1}
	\begin{tabular}{c|c|ccc|ccc|ccc}
		\hline
		& &\multicolumn{3}{c|}{IRPNM} & \multicolumn{3}{c|}{ZeroFPR} & \multicolumn{3}{c}{GIPN-F} \\
		\hline
		$d$ & $s$ & Fval & r(x) & time(s) & Fval & r(x) & time(s) & Fval & r(x) & time(s) \\
		\hline
		\multirow{3}{*}{60} & {16}  
		& 12711.87 & 8.43e-6 & 14.97 
		& 12711.87 & 9.17e-6 & 169.18
		& 12711.89 & 1.32e-3 & 1715.69   \\ 
		\multirow{3}{*}{} & {64}  
		& 17852.99 & 8.10e-6 & 23.21 
		& 17852.99 & 9.10e-6 & 233.36 
		& 17859.13 & 1.33e-2 & 1718.29\\ 
		\multirow{3}{*}{} & {128}  
		& 21670.19 & 9.14e-6 & 29.49 
		& 21670.19 & 9.05e-6 & 220.83
		& 21693.24 & 2.03e-2 & 1727.34 \\ \hline
		\multirow{3}{*}{80} & {16}   
		& 37037.71 & 9.66e-6 & 118.62
		& 37037.71 & 9.35e-6 & 642.72 
		& 38026.26 & 5.70e-2 & 1715.79 \\ 
		\multirow{3}{*}{} & {64}
		& 52741.59 & 9.71e-6 & 222.61
		& 52741.59 & 9.17e-6 & 688.95 
		& 53417.19 & 2.85e-2 & 1718.22 \\ 
		\multirow{3}{*}{} & {128}
		& 63451.74 & 9.34e-6 & 331.96 
		& 63451.74 & 9.35e-6 & 589.64 
		& 64056.72 & 2.09e-2 & 1717.74\\ \hline
	\end{tabular}
\end{table}
\subsection{Restoration of blurred images}\label{sec5.4}

This class of problems has the form of \eqref{prob} with $\psi$ described as in Section \ref{sec5.2} and $g(x)\!=\lambda\|Bx\|_1$ for $x\in\mathbb{R}^n$, where $A\in\mathbb{R}^{n\times n}$ is a matrix to represent a Gaussian blur operator with standard deviation 4 and a filter size of $9$, 
the vector $b\in\mathbb{R}^n$ represents a blurred image,
and $B\in\mathbb{R}^{n\times n}$ is an orthogonal matrix to represent 
a two-dimensional discrete Haar wavelet transform of level $4$. 
A $256\times 256$ image \textsf{cameraman.tif} is chosen as the test image 
$x^{\rm true}\in\mathbb{R}^n$ with $n=256^2$, and the blurred noisy image 
$b$ is obtained by adding Student's $t$-noise with degree of freedom 1 
and rescaled by $10^{-3}$ to $Ax^{\rm true}$.

For each $\lambda\in\{10^{-2},10^{-3},10^{-4}\}$, we run the three solvers with $\nu=1,x^{\rm init}\!=b$ and $\epsilon_0=10^{-4}$ for $10$ independent trials, i.e., ten groups of data $b$ generated randomly. As shown in Figure \ref{fig2}, the three solvers all demonstrate good restorations. Table \ref{table2} reports the average objective values, KKT residuals and running times over $10$ independent trials. We see that IRPNM and ZeroFPR outperform GIPN-F in achieving the desired accuray within less running time, and IRPNM yields the objective values (or the KKT residuals) comparable with those yielded by ZeroFPR though it needs more time than ZeroFPR does. For this class of problems, IRPNM yields a common stationary point for each test instance, and displays a slower convergence rate than ZeroFPR does, which accounts for why it requires more running time than the latter does. 

\begin{figure}[h]
	\centering
	\setlength{\abovecaptionskip}{2pt}
	\subfigure[]{
		\includegraphics[width=0.16\linewidth]{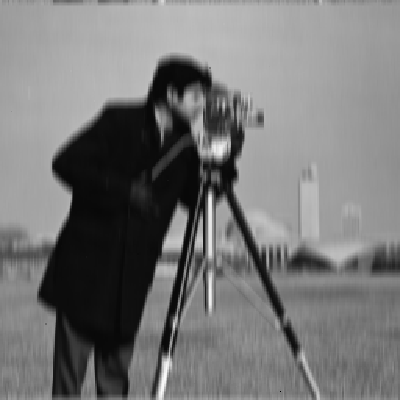}
		\label{fig2a}}
	\subfigure[]{
		\includegraphics[width=0.16\linewidth]{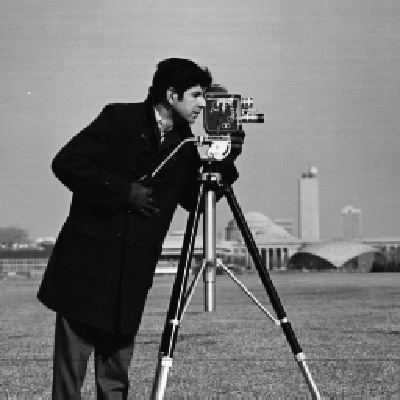}
		\label{fig2b}}
	\subfigure[]{
		\includegraphics[width=0.16\linewidth]{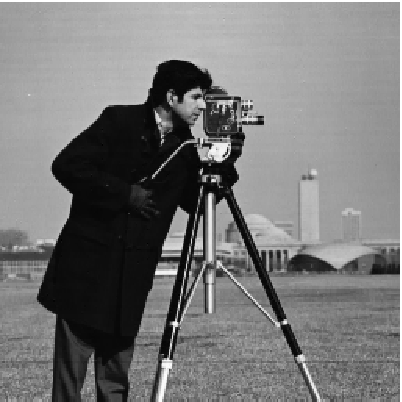}
		\label{fig2c}}
	\subfigure[]{
		\includegraphics[width=0.16\linewidth]{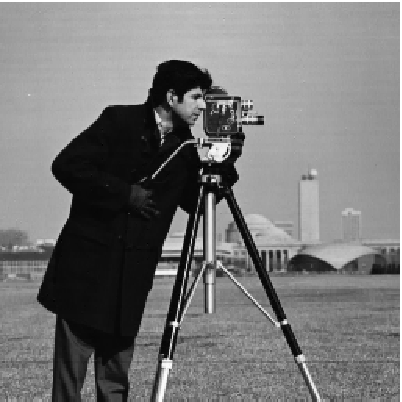}
		\label{fig2d}}
	\subfigure[]{
		\includegraphics[width=0.16\linewidth]{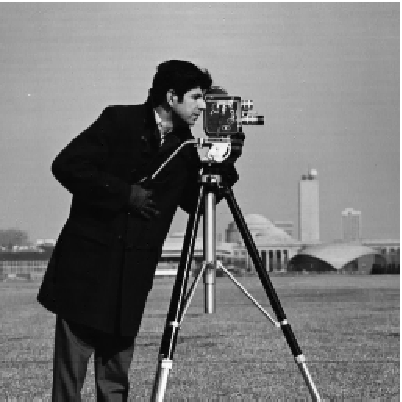}
		\label{fig2e}}
	\caption{Nonconvex image restoration. Recovered images with the three solvers for
		$\epsilon_0=10^{-3}$ and $\lambda=10^{-2}$. (a) Noisy blurred image, (b) original image, (c) IRPNM, 
		(d) ZeroFPR, (e) GIPN-F}
	\label{fig2}
\end{figure}
\begin{table}[h]
	\centering
	\setlength{\tabcolsep}{1.5pt}
	\setlength{\belowcaptionskip}{2pt}
	\caption{Numerical comparisons on nonconvex image restoration}
	\label{table2}
	\begin{tabular}{c|ccc|ccc|ccc}
		\hline
		&\multicolumn{3}{c|}{IRPNM} &\multicolumn{3}{c|}{ZeroFPR} & \multicolumn{3}{c}{GIPN-F}	\\ 
		\hline
		$\lambda$ & Fval & r(x) & time(s)  & Fval & r(x) & time(s)  & Fval & r(x) & time(s) \\ \hline
		1e-2 & 11245.27 & 9.80e-5 & 134.46 
		& 11245.27 & 9.28e-5 & 18.23 
		& 11245.27 & 9.75e-5 & 424.21 \\ \hline
		1e-3 & 1199.45	& 8.35e-5 & 270.62 
		& 1199.33 & 9.59e-5 & 43.71 
		& 1199.45 & 9.95e-5 & 931.03 \\ \hline
		1e-4 & 146.99 & 9.41e-5 & 425.60
		& 146.60 & 9.43e-5 & 122.26 
		& 147.23 & 3.60e-4 & 2713.90 \\ \hline
	\end{tabular}
\end{table}
\section{Conclusions.}\label{sec6}

 We proposed an inexact regularized proximal Newton method for the nonconvex and nonsmooth problem \eqref{prob} by using the approximate matrix $G_k$ in \eqref{def-G} for the Hessian of $f$ at $x^k$. For $\varrho=0$, we verified the global convergence of the iterate sequence and its R-linear convergence rate under suitable KL assumptions on $F$; and for $\varrho\in(0,1)$, established the global convergence of the iterate sequence and its superlinear convergence rate under suitable locally error bound on the (second-order) stationary point set. Numerical experiments confirmed our theoretical findings, and numerical comparisons with the state-of-the-art solvers GIPN and ZeroFPR demonstrated the efficiency of IRNPM armed with SNALM, especially for those problems with second-order stationary points.
\section*{Acknowledgments}
The authors would like to thank Professor Kanzow and Dr. Lechner from 
University of Würzburg for their codes sharing. 

\bibliographystyle{siam}
\bibliography{references.bib}

\appendix
\section{Proof of Proposition \ref{KL-subregular}}\label{secA2}

This part includes the proof of Proposition \ref{KL-subregular}, which requires the following two lemmas. 
\begin{lemma}\label{lemma-descent}
	(see \cite[Proposition A.24]{Bertsekas99}) Consider a function $h\!:\mathbb{R}^n\to\overline{\mathbb{R}}$ and a closed convex set $S\subset{\rm dom}\,h$. Suppose that $h$ is continuously differentiable on an open set containing $S$ and $\nabla h$ is Lipschitz continuous on $S$ with modulus $L_{\nabla h}$. Then,  
	\[
	h(y)\le h(x)+\langle\nabla h(x),y-x\rangle+(L_{\nabla h}/2)\|y-x\|^2
	\quad\ \forall x,y\in S.
	\]
\end{lemma}
\begin{lemma}\label{lemma-localEB}
	(see \cite[Corollary 2(ii)]{Ngai09}) Let $h\!:\mathbb{R}^n\to\overline{\mathbb{R}}$ be an lsc function and  $\overline{x}\in S_h\!:=\{x\in\mathbb{R}^n\ |\ h(x)\ge0\}$. If there exist $a,\gamma,\varepsilon>0$ such that $\gamma\|x^*\|[h(x)]^{\gamma-1}\ge a$ for all $x\in\mathbb{B}(\overline{x},\varepsilon)\backslash S_f$ and $x^*\in\widehat{\partial}h(x)$, then ${\rm dist}(x,S_h)\le(1/a)\max(h(x),0)^{\gamma}$ for all $x\in\mathbb{B}(\overline{x},\varepsilon/2)$, where $\widehat{\partial}h(x)$ denotes the regular subdifferential of $h$ at $x$.
\end{lemma}
\begin{proof}
{\bf(i)} From the expression of $F$ and the descent lemma, for any $x,y\in\mathbb{B}(\overline{x},\widetilde{\varepsilon})\cap{\rm dom}\,g$ and any $v\in\partial F(x)$, where $\widetilde{\varepsilon}$ is the same as the one in \eqref{fgrad-ineq1},   
\begin{equation} \label{Uprox}
	\begin{aligned}
		&F(y)-F(x)-\langle v,y-x\rangle
		=f(y)-f(x)+g(y)-g(x)-\langle v,y-x\rangle \\
		&\ge g(y)\!-\!g(x)-\langle v\!-\!\nabla\!f(x),y\!-\!x\rangle
		-\frac{1}{2}L'\|y-x\|^2\ge-\frac{1}{2}L'\|y-x\|^2,
	\end{aligned}
\end{equation}	
where the first inequality is due to  Lemma \ref{lemma-descent} with $S=\mathbb{B}(\overline{x},\widetilde{\varepsilon})$ by \eqref{fgrad-ineq1} and $L'$ is the same as the one in \eqref{fgrad-ineq1}, and the last one is using $v\!-\!\nabla\!f(x)\in\partial g(x)$ and 
the convexity of $g$. Since $\partial F$ is $q$-subregular at $\overline{x}$ for $0$, there exist $\varepsilon>0,\kappa>0$ such that \eqref{subregular-ineq0} holds for all $z\in\mathbb{B}(\overline{x},\varepsilon)$. Fix any $z\in\mathbb{B}(\overline{x},\delta)\cap[F(\overline{x})<F<F(\overline{x})+\delta]$ 
with $\delta=\min\{\epsilon,\varepsilon,\widetilde{\varepsilon}\}/2$, 
where $\epsilon$ is the constant from Assumption \ref{ass0}. Pick any $u\in\Pi_{\mathcal{S}^*}(z)$. Then, $\|u-\overline{x}\|\le\|u-z\|+\|z-\overline{x}\|\le 2\|z-\overline{x}\|\le\widetilde{\varepsilon}$.
By invoking \eqref{Uprox} with $y=u$ and $x=z$, for all $v\in\partial F(z)$,
$F(u)\ge F(z)+\langle v,u-z\rangle-\frac{1}{2}L'\|u\!-\!z\|^2$.
Since $\|u-\overline{x}\|\le2\|z-\overline{x}\|\le\epsilon$, 
by Assumption \ref{ass0} we have $F(u)\le F(\overline{x})$. Consequently,
\begin{align*}
	F(z)-F(\overline{x})&\le F(z)-F(u)
	\le\inf_{ v\in\partial F(z)}\langle v,z-u\rangle+({L'}/{2})\|z-u\|^{2}\\
	&\le{\rm dist}(0,\partial F(z)){\rm dist}(z,\mathcal{S}^*)
	+({L'}/{2})[{\rm dist}(z,\mathcal{S}^*)]^{2},\\
	&\le (\kappa+{\kappa^2L'}/{2})
	\max\big\{[{\rm dist}(0,\partial F(z))]^{1+q},[{\rm dist}(0,\partial F(z))]^{2q}\big\},
\end{align*}
where the last inequality is using \eqref{subregular-ineq0}. Note that $0<F(z)-F(\overline{x})\le\delta<1$. We have 
${\rm dist}(0,\partial F(z))\ge c\,(F(z)-F(\overline{x}))^{\max\{\frac{1}{2q},\frac{1}{1+q}\}}$ 
for a constant $c>0$ (depending on $\kappa$ and $L'$), so the function $F$ 
has the KL property of exponent $\max\{\frac{1}{2q},\frac{1}{1+q}\}$ at $\overline{x}$. 

\noindent
{\bf(ii)} Since $F$ has the KL property of exponent $\frac{1}{2q}$ for $q\in(1/2,1]$ at $\overline{x}$,	there exist $\varepsilon>0,\varpi>0$ and $c>0$ such that for all $z\in\mathbb{B}(\overline{x},\varepsilon)\cap[F(\overline{x})<F<F(\overline{x})+\varpi]$,
\begin{equation}\label{temp-KL12}
	{\rm dist}(0,\partial F(z))
	\ge 2q[c(2q\!-\!1)]^{-1}\big(F(z)\!-\!F(\overline{x})\big)^{\frac{1}{2q}}.
\end{equation}
From Assumption \ref{ass-1} (i)-(ii), the function $F$ is continuous at $\overline{x}$ relative to ${\rm dom}\,g$. Together with the local optimality of $\overline{x}$,
there exists $\widetilde{\varepsilon}>0$ such that
\begin{equation}\label{equa-continuous}
	F(\overline{x})\le F(z)\le F(\overline{x})+\varpi/2
	\quad\ \forall z\in\mathbb{B}(\overline{x},\widetilde{\varepsilon})\cap{\rm dom}\,g.
\end{equation}
Define $S_{\widetilde{F}}:=\big\{z\in\mathbb{R}^n\,|\,\widetilde{F}(z)\le 0\big\}$ with 
$\widetilde{F}(z):=F(z)+\delta_{\mathbb{B}(\overline{x},\widetilde{\varepsilon})}(z)-F(\overline{x})$ 
for $z\in\mathbb{R}^n$. Take $\delta=\min(\varepsilon,\widetilde{\varepsilon})/2$.
We next argue that for any $x\in\mathbb{B}(\overline{x},\delta)\backslash S_{\widetilde{F}}$,
\begin{equation}\label{aim-ineq21}
	{\rm dist}(0,\widehat{\partial}\widetilde{F}(x))
	\ge 2q[c(2q\!-\!1)]^{-1}\big(F(x)\!-\!F(\overline{x})\big)^{\frac{1}{2q}}
	=2q[c(2q\!-\!1)]^{-1}\big[\widetilde{F}(x)\big]^{\frac{1}{2q}}.
\end{equation}
Pick any $x\in\mathbb{B}(\overline{x},\delta)\backslash S_{\widetilde{F}}$.
If $x\notin{\rm dom}\,g$, inequality \eqref{aim-ineq21} automatically holds,
so it suffices to consider that $x\in[\mathbb{B}(\overline{x},\delta)\cap{\rm dom}\,g]\backslash S_{\widetilde{F}}$. Clearly, $F(x)-F(\overline{x})=\widetilde{F}(x)>0$. Along with \eqref{equa-continuous},
$x\in\mathbb{B}(\overline{x},\varepsilon)\cap[F(\overline{x})<F<F(\overline{x})+\varpi]$.
From \eqref{temp-KL12}, it follows that
\[
{\rm dist}(0,\partial F(x))\ge2q[c(2q\!-\!1)]^{-1}\big(F(x)\!-\!F(\overline{x})\big)^{\frac{1}{2q}}.
\]
By using the expression of $\widetilde{F}$ and noting that
$x\in{\rm int}\,\mathbb{B}(\overline{x},\widetilde{\varepsilon})\cap{\rm dom}\,g$,
it is immediate to have $\widehat{\partial}\widetilde{F}(x)=\partial\widetilde{F}(x)\subset\partial F(x)$.
Combining with the last inequality,	we obtain \eqref{aim-ineq21}, and then
the condition of Lemma \ref{lemma-localEB} is satisfied with $\gamma=\frac{2q-1}{2q}$ and $a=1/c$. From Lemma \ref{lemma-localEB}, for any $z\in\mathbb{B}(\overline{x},\delta/2)$, 
$\max(\widetilde{F}(z),0)\ge [c^{-1}{\rm dist}(z,S_{\widetilde{F}})]^{\frac{2q}{2q-1}}$ which, by the expressions of $\widetilde{F}$ and $S_{\widetilde{F}}$, is equivalent to saying that
\[
F(z)-F(\overline{x})\ge [c^{-1}{\rm dist}(z,S_{\widetilde{F}})]^{\frac{2q}{2q-1}}
\quad\ {\rm for\ all}\ z\in\mathbb{B}(\overline{x},\delta/2).
\]
Note that every point of $S_{\widetilde{F}}$ is a local minimizer of $F$. Clearly,
$S_{\widetilde{F}}\subset(\partial F)^{-1}(0)$. Then
\begin{equation}\label{Fgrowth}
	F(z)\ge F(\overline{x})+[c^{-1}{\rm dist}(z,(\partial F)^{-1}(0))]^{\frac{2q}{2q-1}}
	\quad\ \forall z\in\mathbb{B}(\overline{x},\delta/2).
\end{equation}
Now we argue that $\partial F$ is $(2q\!-\!1)$-subregular at $\overline{x}$ for $0$.
Pick any $x\in\!\mathbb{B}(\overline{x},{\delta}/{2})$. It suffices to consider 
$\partial F(x)\ne\emptyset$. If $x\in[F(\overline{x})<\!F<\!F(\overline{x})+\!\varpi]$,
by \eqref{temp-KL12} and \eqref{Fgrowth},
\begin{equation}\label{aim-ineq}
	{\rm dist}(0,\partial F(x))\ge
	2q[(2q\!-\!1)]^{-1}c^{\frac{2q}{1-2q}}\big[{\rm dist}(x,(\partial F)^{-1}(0))\big]^{\frac{1}{2q-1}}.
\end{equation}
If $x\!\notin\![F(\overline{x})<\!F<\!F(\overline{x})+\!\varpi]$, inequality \eqref{equa-continuous}
means that $F(x)=F(\overline{x})$. Along with \eqref{Fgrowth} and
${\rm dist}(x,(\partial F)^{-1}(0))=0$,  inequality \eqref{aim-ineq} holds.
Thus, inequality \eqref{aim-ineq} holds
for all $x\in\mathbb{B}(\overline{x},\frac{\delta}{2})$, and $\partial F$
is $(2q\!-\!1)$-subregular at $\overline{x}$ for $0$. 
\end{proof}

\section{Unit step-size results without error bound condition}\label{asec1}
\begin{lemma}\label{step-size-noEB}
	Let $\{x^k\}_{k\in\mathbb{N}}$ be the sequence generated by Algorithm \ref{IRPNM} with $\varrho\in(0,1/2)$.
	Consider any $\overline{x}\in\omega(x^0)$. Suppose that $\nabla^2\psi$ is strictly continuous at $A\overline{x}\!-\!b$ relative to $A({\rm dom}g)\!-\!b$ with modulus $L_{\psi}$, 
	then there exists $\overline{k}\in\mathbb{N}$ such that for all $x^k\in\mathbb{B}(\overline{x},{\varepsilon_0}/{2})$ with $k\ge\overline{k}$, 	$\alpha_k=1$, where $\varepsilon_0$ is the same as the one in Lemma \ref{xbark}.  
\end{lemma}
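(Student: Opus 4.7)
The plan is to mimic the proof of Lemma \ref{step-size}, but to replace the role of the H\"{o}lderian error bound with the numerical inequality $1-2\varrho>0$ provided by the restriction $\varrho\in(0,1/2)$. The upshot is the same: for $k$ large and $x^k$ near $\overline{x}$, the ratio $\|d^k\|/\mu_k$ is small enough that the sufficient decrease condition \eqref{lsearch} already holds at $m=0$.

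First, I would re-derive inequality \eqref{Fineq} from the proof of Lemma \ref{step-size} in the present setting. That derivation relied only on (i) the strict continuity of $\nabla^2\!f$ near $\overline{x}$ (which follows from the assumption on $\nabla^2\psi$), (ii) $\nabla^2\!f(x^k)+\Lambda_k A^{\top}A\succeq 0$ (automatic from the definition of $\Lambda_k$), (iii) the convexity of $g$, and (iv) \eqref{ineq-lk}. None of these invokes an error bound, so for any $k$ with $x^k\in\mathbb{B}(\overline{x},\varepsilon_0/2)$ and $\|d^k\|$ small enough to keep the segment $\{x^k+td^k:t\in[0,1]\}$ inside $\mathbb{B}(\overline{x},\varepsilon_0)$, the bound
\[
F(x^k+d^k)-F(x^k)+\sigma\mu_k\|d^k\|^2\le-\tfrac{1}{2}\|d^k\|^3\Bigl(\tfrac{(1-2\sigma)\mu_k}{\|d^k\|}-\tfrac{L_{\psi}\|A\|^3}{3}\Bigr)
\]
remains valid. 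Hence $\alpha_k=1$ is accepted as soon as $\|d^k\|/\mu_k\le 3(1-2\sigma)/(L_{\psi}\|A\|^3)$.

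Second, I would show $\|d^k\|/\mu_k\to 0$. Inequality \eqref{dk-bound} gives $\|d^k\|\le a_2^{-1}(1+\eta)(1+\|G_k\|)\,r(x^k)^{1-\varrho}$. The boundedness of $\{\|G_k\|\}$ follows from the boundedness of $\{x^k\}$ (Proposition \ref{prop-xk}(ii)) together with the continuity of $\nabla^2\psi$ and the fact that $a_2r(x^k)^{\varrho}\to 0$ by Proposition \ref{prop-xk}(iii). Since $\mu_k=a_2r(x^k)^{\varrho}$, this yields
\[
\frac{\|d^k\|}{\mu_k}\le C_0\,r(x^k)^{1-2\varrho}
\]
for some constant $C_0>0$. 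Because $\varrho<1/2$, the exponent is strictly positive, and $r(x^k)\to 0$ forces $\|d^k\|/\mu_k\to 0$.

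Combining the two steps, I can pick $\overline{k}\in\mathbb{N}$ so that for every $k\ge\overline{k}$ one simultaneously has $\|d^k\|$ small enough that $x^k+td^k\in\mathbb{B}(\overline{x},\varepsilon_0)$ for all $t\in[0,1]$ whenever $x^k\in\mathbb{B}(\overline{x},\varepsilon_0/2)$, and $\|d^k\|/\mu_k<3(1-2\sigma)/(L_{\psi}\|A\|^3)$; for such $k$ the displayed bound above is nonpositive with $m=0$, hence $\alpha_k=1$. The only potentially delicate point is that $\mu_k\to 0$ simultaneously with $\|d^k\|\to 0$; the role of the hypothesis $\varrho<1/2$ is precisely to ensure that $\|d^k\|=O(r(x^k)^{1-\varrho})$ vanishes strictly faster than $\mu_k=a_2r(x^k)^{\varrho}$, which is where the argument would break down if $\varrho\ge 1/2$ and no error-bound information were available.
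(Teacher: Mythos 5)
Your proposal is correct and follows essentially the same route as the paper's own proof in Appendix~\ref{asec1}: it combines the bound $\|d^k\|/\mu_k\le a_2^{-2}(1+\eta)(1+\|G_k\|)\,r(x^k)^{1-2\varrho}$ from \eqref{dk-bound} with $r(x^k)\to 0$ and $1-2\varrho>0$ to force $\|d^k\|/\mu_k$ below the threshold $3(1-2\sigma)/(L_{\psi}\|A\|^3)$, and then reuses the Taylor-expansion decrease estimate \eqref{Fineq} (the paper's \eqref{Fineq-noEB}) to conclude that the line search accepts $m=0$. No gaps; the identification of $\varrho<1/2$ as the substitute for the error bound is exactly the paper's argument.
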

\begin{proof}
	From \eqref{dk-bound} we have $\frac{\|d^k\|}{\mu^k}\le\frac{(1+\|G_k\|)(1+\eta)}{a_2^2}r(x^k)^{1-2\varrho}$ 
	for each $k\in\mathbb{N}$. Recall that $\lim_{k\to\infty}r(x^k)=0$ 
	and $\lim_{k\to\infty}\|d^k\|=0$ by Proposition \ref{prop-xk} (iii). By the boundedness of $\{\|G_k\|\}$, there exists $\overline{k}\in\mathbb{N}$ such that for all $k\ge\overline{k}$, $\frac{\|d^k\|}{\mu^k}\le\frac{3(1-2\sigma)}{L_{\psi}\|A\|^3}$ and $\|d^k\|\le\frac{\varepsilon_0}{2}$. Fix any $k\ge\overline{k}$ with 
	$x^k\in\mathbb{B}(\overline{x},{\varepsilon_0}/{2})$.
	For such $x^k$ and any $x^k+st\beta^md^k$ with $s,t\in[0,1]$, we have $\|x^k+st\beta^md^k-\overline{x}\|\le\|x^k-\overline{x}\|+\|d^k\|\le\varepsilon_0$. 
	By invoking \eqref{Hessian-Lip}, we have
	\begin{align*}
		&f(x^k\!+\!\beta^md^k)\!-\!f(x^k)
		=\int_{0}^{1}\beta^m\langle[\nabla\!f(x^k\!+\!t\beta^md^k)\!-\!\nabla\!f(x^k)],d^k\rangle dt +\beta^m\langle\nabla\!f(x^k),d^k\rangle\\
		&=\int_{0}^{1}\int_{0}^{1}t\beta^{2m}\langle d^k,[\nabla^2\!f(x^k\!+\!st\beta^md^k)
		\!-\!\nabla^2\!f(x^k)]d^k\rangle dsdt\\
		&\quad+\int_{0}^{1}t\beta^{2m}\langle d^k,\nabla^2\!f(x^k)d^k\rangle dt
		+\beta^m\langle\nabla\!f(x^k),d^k\rangle\\
		&\le\frac{L_{\psi}\|A\|^3}{6}\beta^{3m}\|d^k\|^3
		+\frac{1}{2}\beta^{2m}\langle d^k,\nabla^2\!f(x^k)d^k\rangle
		+\beta^m\langle\nabla\!f(x^k),d^k\rangle\\
		&\le\frac{L_{\psi}\|A\|^3}{6}\beta^{3m}\|d^k\|^3
		+\frac{1}{2}\beta^{m}\langle d^k,[\nabla^2\!f(x^k)\!+\!\Lambda_kA^{\top}A]d^k\rangle
		+\beta^m\langle\nabla\!f(x^k),d^k\rangle
	\end{align*}
	where the last inequality is due to $\Lambda_k\ge 0$
	and $\nabla^2\!f(x^k)\!+\!\Lambda_kI\succeq 0$.
	In addition, from the convexity of the function $g$ and $\beta^m\in(0,1]$,
	it follows that
	\begin{align*}
		g(x^k\!+\!\beta^m d^k)-g(x^k)
		&\le\beta^m[g(y^k)-g(x^k)]=\beta^m[\ell_k(y^k)-\ell_k(x^k)
		-\langle\nabla\!f(x^k),d^k\rangle]\\
		&\le-\beta^m[\langle\nabla\!f(x^k),d^k\rangle+\frac{1}{2}\langle d^k,G_kd^k\rangle],
	\end{align*}
	where the last inequality is by \eqref{ineq-lk}.
	Adding the last two inequalities together yields
	\begin{align}\label{Fineq-noEB}
		F(x^k\!+\!\beta^md^k)-F(x^k)+\sigma\mu_k\beta^{m}\|d^k\|^2
		&\le\frac{L_{\psi}\|A\|^3}{6}\beta^{3m}\|d^k\|^3-(1/2-\sigma)\mu_k\beta^m\|d^k\|^2\nonumber\\
		&\le-\frac{1}{2}\beta^{m}\|d^k\|^3\Big(\frac{(1\!-\!2\sigma)\mu_k}{\|d^k\|}
		-\frac{L_{\psi}\|A\|^3}{3}\Big).
	\end{align}
	Recall that for all $k\ge\overline{k}$, $\frac{\|d^k\|}{\mu^k}\le\frac{3(1-2\sigma)}{L_{\psi}\|A\|^3}$.
	This means that \eqref{Fineq-noEB} holds with $m=0$ if $k\ge\overline{k}$ 
	and $x^k\in\mathbb{B}(\overline{x},{\varepsilon_0}/{2})$. Consequently, 
	the desired result follows.
\end{proof}

\section{Proof of Theorem \ref{converge}}\label{asec2}
Before providing the proof of Theorem \ref{converge}, we first show that the unit step-size occurs when $k$ is sufficiently large. 
\begin{lemma}\label{step-sizeS}
	Fix any $\overline{x}\in\omega(x^0)$ and $q>2\varrho$. Suppose that $\nabla^2\psi$ is strictly continuous at $A\overline{x}-b$ relative to $A({\rm dom}g)-b$ with modulus $L_{\psi}$, and that there exist $\varepsilon>0$ and $\kappa>0$ such that ${\rm dist}(x,\mathcal{S}^*)\le\kappa[r(x)]^q$ 
	for any $x\in\mathbb{B}(\overline{x},\varepsilon)$. 
	Then, there exists $\overline{k}\in\mathbb{N}$ such that for all $x^k\in\mathbb{B}(\overline{x},\varepsilon_1)$ with
	$k\ge\overline{k}$ and $\varepsilon_1=\min(\varepsilon,{\varepsilon_0}/{2})$, $\alpha_k=1$, where $\varepsilon_0$ is the same as the one in Lemma \ref{xbark}.  
\end{lemma}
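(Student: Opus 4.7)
My plan is to closely parallel the proof of Lemma \ref{step-size}, with one structural modification: Lemma \ref{lemma-Lamk} cannot be applied here because the hypothesized error bound is imposed on $\mathcal{S}^*$ rather than on $\mathcal{X}^*$, so $\Lambda_k$ need not tend to zero. I will instead rely only on the uniform boundedness of $\{\Lambda_k\}$ and use the strengthened hypothesis $q>2\varrho$ to absorb the offending term that Lemma \ref{lemma-Lamk} previously handled via its own decay.

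First I would combine Lemmas \ref{lemma-yk} and \ref{xbark} (valid since $\overline{x}\in\omega(x^0)\subseteq\mathcal{S}^*$ by Proposition \ref{prop-xk} (iv) and $\varepsilon_1\le\varepsilon_0/2$), which gives, for every $k$ with $x^k\in\mathbb{B}(\overline{x},\varepsilon_1)$,
\[
\|d^k\|\le\frac{\eta(1\!+\!\|G_k\|)}{a_2}[r(x^k)]^{1+\tau-\varrho}+\bigg(\frac{L_{\psi}\|A\|^3{\rm dist}(x^k,\mathcal{S}^*)}{2\mu_k}+\frac{\Lambda_k\|A\|^2}{\mu_k}+2\bigg){\rm dist}(x^k,\mathcal{S}^*).
\]
By Proposition \ref{prop-xk} (ii) the iterates stay in the compact level set $\mathcal{L}_F(x^0)$, so continuity of $\nabla^2\psi(A\cdot-b)$ makes $\{\|G_k\|\}$ and $\{\Lambda_k\}$ uniformly bounded. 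Substituting $\mu_k=a_2[r(x^k)]^\varrho$ and ${\rm dist}(x^k,\mathcal{S}^*)\le\kappa[r(x^k)]^q$ and dividing by $\mu_k$ yields
\[
\frac{\|d^k\|}{\mu_k}\le O\big([r(x^k)]^{1+\tau-2\varrho}\big)+O\big([r(x^k)]^{2q-2\varrho}\big)+O\big([r(x^k)]^{q-2\varrho}\big)+O\big([r(x^k)]^{q-\varrho}\big).
\]
The conditions $\tau\ge\varrho$ and $\varrho\in(0,1)$ force $1+\tau-2\varrho\ge 1-\varrho>0$, while $q>2\varrho$ makes the other three exponents strictly positive. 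Since $r(x^k)\to 0$ by Proposition \ref{prop-xk} (iii), there exists $\overline{k}\in\mathbb{N}$ such that $(1-2\sigma)\mu_k/\|d^k\|\ge L_{\psi}\|A\|^3/3$ and $x^k+td^k\in\mathbb{B}(\overline{x},\varepsilon_0)$ for all $t\in[0,1]$ whenever $k\ge\overline{k}$ and $x^k\in\mathbb{B}(\overline{x},\varepsilon_1)$.

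Finally I would transplant, verbatim, the second-order Taylor/descent calculation in the proof of Lemma \ref{step-size}: two successive mean-value expansions of $f$ combined with \eqref{Hessian-Lip} and $\nabla^2\!f(x^k)+\Lambda_kA^{\top}A\succeq 0$, together with convexity of $g$ and \eqref{ineq-lk}, deliver
\[
F(x^k+\beta^md^k)-F(x^k)+\sigma\mu_k\beta^m\|d^k\|^2\le-\tfrac{1}{2}\beta^m\|d^k\|^3\Big(\tfrac{(1-2\sigma)\mu_k}{\|d^k\|}-\tfrac{L_{\psi}\|A\|^3}{3}\Big),
\]
whose right-hand side is nonpositive at $m=0$ by the bound just established. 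Hence the Armijo rule \eqref{lsearch} is satisfied with $m_k=0$, so $\alpha_k=1$.

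The hard part is isolated in the second step: without Lemma \ref{lemma-Lamk} the mixed term $\Lambda_k\|A\|^2{\rm dist}(x^k,\mathcal{S}^*)/\mu_k^2$ is only $O([r(x^k)]^{q-2\varrho})$, and demanding this to vanish is precisely what forces the hypothesis to be sharpened from $q>\varrho$ (sufficient in Lemma \ref{step-size}) to $q>2\varrho$ here.
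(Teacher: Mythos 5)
Your proposal is correct and follows essentially the same route as the paper's own proof in Appendix \ref{asec2}: bound $\|d^k\|$ via Lemmas \ref{lemma-yk} and \ref{xbark}, use only the uniform boundedness of $\{\Lambda_k\}$ together with the error bound on $\mathcal{S}^*$ and the strengthened condition $q>2\varrho$ to force $\|d^k\|/\mu_k\to 0$, and then reuse the Taylor/descent computation from Lemma \ref{step-size} to accept the unit step. Your closing diagnosis of why $q>\varrho$ must be sharpened to $q>2\varrho$ (the loss of the decay of $\Lambda_k$ supplied by Lemma \ref{lemma-Lamk}) matches the paper's reasoning exactly.
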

\begin{proof}
	Since $\overline{x}\in\omega(x^0)$, then $\overline{x}\in\mathcal{S}^*$ by Proposition \ref{prop-xk} (iv). Recall that $d^k=y^k-x^k$ for each $k\in\mathbb{N}$. By invoking Lemmas \ref{lemma-yk}-\ref{xbark}, for all $x^k\in\mathbb{B}(\overline{x},\varepsilon_1)$,
	\begin{align*}
		\|d^k\|&\le\|y^k-\overline{x}^k\|+\|x^k-\overline{x}^k\|\\
		&\le\frac{\eta(1\!+\!\|G_k\|)}{a_2}[r(x^k)]^{1+\tau-\varrho}
		+\Big(\frac{L_{\psi}\|A\|^3{\rm dist}(x^k,\mathcal{S}^*)}{2\mu_k}
		+\frac{\Lambda_k\|A\|^2}{\mu_k}\!+\!2\Big){\rm dist}(x^k,\mathcal{S}^*).
	\end{align*} 
	From \eqref{rk-ineq1}, the last inequality and the error bound assumption,
	for all $x^k\in\mathbb{B}(\overline{x},\varepsilon_1)$,
	\begin{align}\label{dk-ineq0S}
		\|d^k\|
		&\le\frac{\eta(1\!+\!\|G_k\|)(2\!+\!L_{\nabla\!f})^{1+\tau-\varrho}}{a_2}[{\rm dist}(x^k,\mathcal{S}^*)]^{1+\tau-\varrho}
		+\frac{L_{\psi}\|A\|^3\kappa^{\frac{\varrho}{q}}}{2a_2}[{\rm dist}(x^k,\mathcal{S}^*)]^{2-\frac{\varrho}{q}}
		\nonumber\\
		&\qquad +\frac{\Lambda_k\|A\|^2\kappa^{\frac{\varrho}{q}}}{a_2}{\rm dist}(x^k,\mathcal{S}^*)^{1-\frac{\varrho}{q}}
		+2{\rm dist}(x^k,\mathcal{S}^*)\nonumber\\
		&\le\left[\frac{\eta(1\!+\!\|G_k\|)(2\!+\!L_{\nabla\!f})^{1+\tau-\varrho}}{a_2}
		[{\rm dist}(x^k,\mathcal{S}^*)]^{\tau-\varrho+\frac{\varrho}{q}}
		+\frac{L_{\psi}\|A\|^3\kappa^{\frac{\varrho}{q}}}{2a_2}{\rm dist}(x^k,\mathcal{S}^*)
		\right.\nonumber\\
		&\qquad\qquad\left. +\frac{\Lambda_k\|A\|^2\kappa^{\frac{\varrho}{q}}}{a_2}
		+2[{\rm dist}(x^k,\mathcal{S}^*)]^{\frac{\varrho}{q}}\right]
		{\rm dist}(x^k,\mathcal{S}^*)^{1-\frac{\varrho}{q}}.
	\end{align}
	Recall that $\{\Lambda_k\}$ is bounded and $\lim_{k\to\infty}{\rm dist}(x^k,\mathcal{S}^*)=0$ by Proposition \ref{prop-xk} (ii) and (iv).
	From the last inequality and $q>2\varrho$, there exists $\overline{k}\in\mathbb{N}$ 
	such that for all $k\ge\overline{k}$, when $x^k\in\mathbb{B}(\overline{x},\varepsilon_1)$, 
	$\frac{\|d^k\|}{(1-2\sigma)\mu_k}\le\frac{3}{L_{\psi}\|A\|^3}$ and
	$x^k+t d^k\in\mathbb{B}(\overline{x},\varepsilon_0)$ for all $t\in[0,1]$. 
	Then, by following the same arguments as those for Lemma \ref{step-size}, 
	the desired result follows. 
\end{proof}
In the following, we provide the proof of Theorem \ref{converge}.

\begin{proof}
	Let $\overline{k}\in\mathbb{N}$ be same as in Lemma \ref{step-sizeS}. Since $\lim_{k\to\infty}r(x^k)=0$ by Proposition \ref{prop-xk} (iii), $r(x^k)\le 1$ for all $k\ge\overline{k}$ (if necessary by increasing $\overline{k}$). Recall that $\lim_{k\to\infty}{\rm dist}(x^k,\mathcal{S}^*)=0$ by Proposition \ref{prop-xk} (iv) and $\{x^k\}_{k\in\mathbb{N}}$ is bounded by Proposition \ref{prop-xk} (ii). From \eqref{dk-ineq0S} and $\tau\!\ge\varrho$, there exists $c_1\!>0$ such that 
	\begin{equation}\label{dk-ineq1S}
		\|d^k\|\le c_1{\rm dist}(x^k,\mathcal{S}^*)^{1-\frac{\varrho}{q}}
		\quad{\rm for\ all}\ k\ge\overline{k}\ (\textrm{if necessary by increasing}\ \overline{k}).
	\end{equation}
	Together with Lemma \ref{step-sizeS}, it follows that for all $k\ge\overline{k}$, 
	when $x^k\in\mathbb{B}(\overline{x},\varepsilon_1)$,
	\begin{equation}\label{alphak-equaS}
		\alpha_k=1.
	\end{equation} 
	Next we argue that for all $k\ge\overline{k}$, 
	once $x^k\in\!\mathbb{B}(\overline{x},\varepsilon_1)$ and
	$x^{k+1}\!=x^k+d^k=y^k\in\mathbb{B}(\overline{x},\varepsilon_1)$,
	\begin{equation}\label{recursionS}
		{\rm dist}(x^{k+1},\mathcal{S}^*)=o({\rm dist}(x^{k},\mathcal{S}^*)).
	\end{equation} 
	Indeed, using ${\rm dist}(x^{k+1},\mathcal{S}^*)\!\le\!\kappa[r(x^{k+1})]^q$ and $r_k(y^k)\!\le\!\eta[r(x^k)]^{1+\tau}$ by \eqref{inexact-conda} and $r(x^k)\le 1$,
	\begin{align*}
		{\rm dist}(x^{k+1},\mathcal{S}^*)
		&\le\kappa[r(x^{k+1})]^q =\kappa\big[r(x^{k+1})-r_k(x^{k+1})+r_k(y^{k})\big]^q\nonumber\\
		&\le\kappa\big[r(x^{k+1})-r_k(x^{k+1})+\eta[r(x^{k})]^{1+\tau}\big]^q\nonumber\\
		&\le\kappa\big[|r(x^{k+1})-r_k(x^{k+1})|
		+\eta(2\!+\!L_{\nabla\!f})^{1+\tau}[{\rm dist}(x^k,\mathcal{S}^*)]^{1+\tau}\big]^q
	\end{align*}
	where the last inequality is using \eqref{rk-ineq1}. Note that $(1-t)x^k+tx^{k+1}\in\mathbb{B}(\overline{x},\varepsilon_0)$ for all $t\in[0,1]$. Using the expressions of $r$ and $r_k$ and inequality \eqref{dk-ineq1S} yields that
	\begin{align*}
		&|r(x^{k+1})-r_k(x^{k+1})|\le\|\nabla\!f(x^{k+1})-\nabla\!f(x^k)-G_k(x^{k+1}-x^k)\|\\
		&\le\Big\|\int_{0}^{1}\!\big[\nabla^2\!f(x^k\!+\!t(x^{k+1}\!-\!x^k))\!-\!\nabla^2\!f(x^k)\big]
		(x^{k+1}\!-\!x^k)dt\Big\|+(\Lambda_k\|A\|^2\!+\!\mu_k)\|x^{k+1}\!-\!x^k\|\\
		&\le\frac{L_{\psi}\|A\|^3}{2}\|x^{k+1}-x^k\|^2+(\Lambda_k\|A\|^2\!+\!\mu_k)\|x^{k+1}-x^k\|
		=\frac{L_{\psi}\|A\|^3}{2}\|d^k\|^2+(\Lambda_k\|A\|^2\!+\!\mu_k)\|d^k\| \\
		&\le \frac{L_{\psi}\|A\|^3c_1^2}{2}{\rm dist}(x^k,\mathcal{S}^*)^{2(1-\frac{\varrho}{q})}
		\!+c_1\Lambda_k\|A\|^2{\rm dist}(x^k,\mathcal{S}^*)^{1-\frac{\varrho}{q}}
		\!+c_1a_2(2\!+\!L_{\nabla\!f})^{\varrho}{\rm dist}(x^k,\mathcal{S}^*)^{1-\frac{\varrho}{q}+\varrho}.
	\end{align*}
	From the last two inequalities, we have 
	\begin{align}\label{dist-recursionS}
		&{\rm dist}(x^{k+1},\mathcal{S}^*)
		\le\kappa\Big[\frac{L_{\psi}\|A\|^3c_1^2}{2}{\rm dist}(x^k,\mathcal{S}^*)^{2(1-\frac{\varrho}{q})}
		+c_1a_2(2\!+\!L_{\nabla\!f})^{\varrho}[{\rm dist}(x^k,\mathcal{S}^*)]^{1-\frac{\varrho}{q}+\varrho}\nonumber\\
		&\qquad\qquad\qquad\quad +c_1\Lambda_k\|A\|^2{\rm dist}(x^k,\mathcal{S}^*)^{1-\frac{\varrho}{q}}		
		+\eta(2\!+\!L_{\nabla\!f})^{1+\tau}[{\rm dist}(x^k,\mathcal{S}^*)]^{1+\tau}\Big]^{q}\nonumber\\
		&=\kappa\Big[\frac{L_{\psi}\|A\|^3c_1^2}{2}{\rm dist}(x^k,\mathcal{S}^*)^{2-\frac{1+2\varrho}{q}}
		+c_1a_2(2\!+\!L_{\nabla\!f})^{\varrho}[{\rm dist}(x^k,\mathcal{S}^*)]^{(1-\frac{1}{q})(1+\varrho)}\nonumber\\
		&\quad\ 
		+c_1\Lambda_k\|A\|^2{\rm dist}(x^k,\mathcal{S}^*)^{1-\frac{1+\varrho}{q}}
		+\eta(2\!+\!L_{\nabla\!f})^{1+\tau}[{\rm dist}(x^k,\mathcal{S}^*)]^{1+\tau-\frac{1}{q}}\Big]^{q}
		{\rm dist}(x^k,\mathcal{S}^*).
	\end{align}
	Since $q>1+\varrho$, $\{\Lambda_k\}$ is bounded and $\lim_{k\to\infty}{\rm dist}(x^k,\mathcal{S}^*)=0$, the last inequality implies that the stated relation \eqref{recursionS} holds. Then, for any $\widetilde{\sigma}\in(0,1)$, there exists
	$0<\varepsilon_2<\varepsilon_1$ such that for all $k\ge\overline{k}$, 
	if $x^k\!\in\mathbb{B}(\overline{x},\varepsilon_{2})$ and
	$x^{k+1}\!=x^k+d^k=y^k\in\mathbb{B}(\overline{x},\varepsilon_{2})$,
	\begin{equation}\label{recursion1S}
		{\rm dist}(y^k,\mathcal{S}^*)\le\widetilde{\sigma}{\rm dist}(x^k,\mathcal{S}^*).
	\end{equation}
	
	Let $\overline{\varepsilon}\!:=\!\min\big\{(\frac{\varepsilon_2}{1+c_1})^{\frac{1}{1-\varrho/q}},
	\frac{\varepsilon_2}{2},
	[\frac{(1-\widetilde{\sigma}^{1-\varrho/q})\varepsilon_2}{2c_1}]^{\frac{1}{1-\varrho/q}}\big\}$. 
	Next we argue by induction that if some iterate 
	$x^{k_0}\in\mathbb{B}(\overline{x},\overline{\varepsilon})$ 
	with $k_0\ge\overline{k}$, then $\alpha_k=1$ and $x^{k+1}=y^k\in\mathbb{B}(\overline{x},\varepsilon_2)$
	for all $k\ge k_0$. Indeed, by letting $k=k_0$, remembering that
	$x^k\!\in\mathbb{B}(\overline{x},\overline{\varepsilon})$ and using
	\eqref{dk-ineq1S}, we get
	\[
	\|y^k\!-\!\overline{x}\|
	\le\|x^k\!-\!\overline{x}\|+\|y^k\!-\!x^k\|=\|x^k\!-\!\overline{x}\|+\|d^k\|
	\le\|x^k\!-\!\overline{x}\|+c_1\|x^k\!-\!\overline{x}\|^{1-\frac{\varrho}{q}}
	\le\varepsilon_2.
	\]
	Since $\overline{\varepsilon}<\varepsilon_1$, we get $\alpha_k=1$ by \eqref{alphak-equaS}, and hence $x^{k+1}=y^k\in\mathbb{B}(\overline{x},\varepsilon_2)$. Fix any $k>k_0$. 
	Assume that for all $k_0\le l\le k\!-\!1, \alpha_l=1,x^{l+1}=y^{l}\in\mathbb{B}(\overline{x},\varepsilon_2)$.
	By invoking \eqref{dk-ineq1S} and \eqref{recursion1S}, 
	\begin{align*}
		\|y^{k}\!-\!x^{k_0}\|&\!\le\!\sum_{l=k_0}^{k}\|d^l\|
		\!\le c_1\!\sum_{l=k_0}^{k}{\rm dist}(x^l,\mathcal{S}^*)^{1-\frac{\varrho}{q}}
		\!\le c_1\!\sum_{l=k_0}^{k}\widetilde{\sigma}^{(1-\frac{\varrho}{q})(l-k_0)}
		{\rm dist}(x^{k_0},\mathcal{S}^*)^{1-\frac{\varrho}{q}}\\
		&\le\!\frac{c_1}{1\!-\!\widetilde{\sigma}^{1-\frac{\varrho}{q}}}
		\|x^{k_0}\!-\!\overline{x}\|^{1-\frac{\varrho}{q}}
		\le\!\frac{\varepsilon_2}{2}.
	\end{align*}
	This implies that $\|y^k-\overline{x}\|\le\|y^k-x^{k_0}\|+\|x^{k_0}-\overline{x}\|
	\le\varepsilon_2$. Note that $x^{k}\in\mathbb{B}(\overline{x},\varepsilon_2)$.
	From \eqref{alphak-equaS}, $\alpha_k=1$ and
	$x^{k+1}\!=x^k+d^k=y^k\!\in\mathbb{B}(\overline{x},\varepsilon_2)$.
	The stated result holds.
	
	Since $\overline{x}\in\omega(x^0)$, we can find $k_0>\overline{k}$ 
	such that $x^{k_0}\in\mathbb{B}(\overline{x},\overline{\varepsilon})$.
	Then, as shown above, for all $k\ge k_0$, $\alpha_k=1$
	and $x^{k+1}\!=y^k\in\mathbb{B}(\overline{x},\varepsilon_{2})$.
	Since $\lim_{k\to\infty}{\rm dist}(x^k,\mathcal{S}^*)=0$, 
	for any $\epsilon>0$ there exists
	$\mathbb{N}\ni\overline{k}_0\ge k_0$ such that for all $k\ge\overline{k}_0$,
	${\rm dist}(x^{k},\mathcal{S}^*)\le\epsilon$.
	Fix any $k_1\ge k_2\ge\overline{k}_0$. From \eqref{recursion1S},
	it then follows that
	\begin{align}\label{xk-ineq2S}
		\|x^{k_1}\!-\!x^{k_2}\|&\le\sum_{j=k_2}^{k_1-1}\|x^{j+1}-x^j\|
		\le\sum_{j=k_2}^{k_1-1}\|d^j\|
		\le c_1\sum_{j=k_2}^{k_1-1}{\rm dist}(x^j,\mathcal{S}^*)^{1-\varrho/q}\nonumber\\
		&\le c_1\sum_{j=k_2}^{k_1-1}\widetilde{\sigma}^{(1-\varrho/q)(j-k_2)}
		{\rm dist}(x^{k_2},\mathcal{S}^*)^{1-\varrho/q} \nonumber\\
		&=\frac{c_1}{1-\widetilde{\sigma}^{1-\varrho/q}}{\rm dist}(x^{k_2},\mathcal{S}^*)^{1-\varrho/q}
		\le\frac{c_1\epsilon^{1-\varrho/q}}{1-\widetilde{\sigma}^{1-\varrho/q}}.
	\end{align}
	This shows that $\{x^k\}_{k\in\mathbb{N}}$ is a Cauchy sequence
	and converges to $\overline{x}\in\mathcal{S}^*$. 
	By passing the limit $k_1\to\infty$ to \eqref{xk-ineq2S} and using \eqref{dist-recursionS}, 
	we conclude that for any $k>\overline{k}_0$,
	\[
	\|x^{k+1}-\overline{x}\|
	\le\frac{c_1}{1-\widetilde{\sigma}^{1-\varrho/q}}{\rm dist}(x^{k+1},\mathcal{S}^*)
	\le O([{\rm dist}(x^{k},\mathcal{S}^*)]^{(q-\varrho)^2/q})
	\le O(\|x^k-\overline{x}\|^{(q-\varrho)^2/q}),
	\]
	and $(q-\varrho)^2/q>1$ provided that $q>\frac{2\varrho+1+\sqrt{4\varrho+1}}{2}$.
	That is, $\{x^k\}_{k\in\mathbb{N}}$ converges to $\overline{x}$ with
	the $Q$-superlinear rate of order $(q-\varrho)^2/q$.
\end{proof}

\end{document}